\documentclass{article}
 
\usepackage{breqn}
 
\usepackage{hyperref}
\usepackage[table,dvipsnames]{xcolor}
 \hypersetup{
    colorlinks=true,
    linkcolor=Maroon,     
    urlcolor=blue,
    citecolor= blue
    } 
   
\usepackage{enumitem,pifont,float,framed,fullpage}
\makeatletter
\def\namedlabel#1#2{\begingroup
    #2%
    \def\@currentlabel{#2}%
    \phantomsection\label{#1}\endgroup
}
\makeatother

\usepackage[round]{natbib}

\bibliographystyle{apalike}

\usepackage{graphicx}
\usepackage{amsmath}
\usepackage{amssymb}
\usepackage{amsthm}
\usepackage{thmtools}
\usepackage{thm-restate}
\usepackage{soul}

\usepackage{multirow}
\usepackage{booktabs} 
\usepackage{algorithm}
\usepackage{algorithmic}
\DeclareMathOperator*{\argmin}{argmin}

\DeclareMathOperator*{\bbR}{\mathbb{R}}
\DeclareMathOperator*{\sZ}{\mathcal{Z}}
\DeclareMathOperator*{\grad}{\operatorname{grad}}

\newcommand{\eg}{\textit{e.g.}\ }

\newcommand{\diag}{\operatorname*{diag}}
\newcommand{\sign}{\operatorname*{sign}}
\newcommand{\R}{\mathbb{R}}

\newtheorem{theorem}{Theorem}
\newtheorem{corollary}{Corollary}
\newtheorem{lemma}{Lemma}[section]

\newtheorem{remark}{Remark}
\newtheorem{definition}{Definition}
\newtheorem*{theorem*}{Theorem}

\begin{document}

\title{From the simplex to the sphere: Faster constrained optimization using the Hadamard parametrization}
 
\author{Qiuwei Li\thanks{Alibaba DAMO Academy, Bellevue, WA}      
\and
Daniel McKenzie\thanks{Colorado School of Mines, Golden, CO}
\and
Wotao Yin$^{\ast}$ 
}

\maketitle

\begin{abstract}
The standard simplex in $\mathbb{R}^n$, also known as the probability simplex, is the set of nonnegative vectors whose entries sum up to 1. They frequently appear as constraints in optimization problems that arise in machine learning, statistics, data science, operations research, and beyond. 
We convert the standard simplex to the unit sphere and thus transform the corresponding constrained optimization problem into an optimization problem on a simple, smooth manifold.
We show that KKT points and strict-saddle points of the minimization problem on the standard simplex all correspond to those of the transformed problem, and vice versa. So, solving one problem is equivalent to solving the other problem. Then, we propose several simple, efficient, and projection-free algorithms using the manifold structure. The equivalence and the proposed algorithm can be extended to optimization problems with unit simplex, weighted probability simplex, or $\ell_1$-norm sphere constraints. Numerical experiments between the new algorithms and existing ones show the advantages of the new approach. Open source code is available at \href{https://github.com/DanielMckenzie/HadRGD}{here}.
\end{abstract}

\section{Introduction}
\label{sec:Intro}
Let  $[x]_i$ denote the $i$th entry of vector $x\in\mathbb{R}^n$. Consider the constrained optimization problem:
\begin{equation}
\operatorname*{minimize}_{x \in \Delta_n} f(x)
    \tag{$\mathrm{P_{1a}}$}
    \label{C_1}
\end{equation}
where   
\begin{equation}
\Delta_n= \left\{x \in \mathbb{R}^{n}: \ \sum_{i=1}^n [x]_i = 1 \text{ and } x\ge 0 \right\} 
\end{equation}
is the {\em standard simplex}
and $f$ is twice continuously differentiable (henceforth: {\em smooth}).

A natural approach to this problem is {\em Projected Gradient Descent (PGD)}: 
\begin{equation}
    x_{k+1} = \mathrm{P}_{\Delta_n}\left(x_k - \eta_k\nabla f(x_k)\right)
\end{equation}
where $\eta_k$ is step size and $\mathrm{P}_{\Delta_n}(y)$ is the projection onto $\Delta_n$. One can also apply an appropriate interior-point method based on the type of $f$ or solve an entropy-regularization approximation of this problem.



We propose to use the {\em Hadamard parametrization}: $x = z\odot z$, where $\odot$ represents the elementwise product, that is, $[x]_i = [z]_i^2$ for $i=1,\dots,n$. 
By
\begin{equation*}
 \sum_i [x]_i = 1 \text{ and } x\geq 0,~\iff~  \|z\|_2^2=1
\end{equation*}
we transform the original set $\Delta_n$ to a unit sphere constraint $z\in\mathcal{S}_{n-1}$, 
where $\mathcal{S}_{n-1} := \{z \in \mathbb{R}^{n}: \ \|z\|_2 = 1\}$ is the unit sphere; see Fig. \ref{fig:simplex2ball}.
\begin{figure}[h]
    \centering
    \includegraphics[width=0.6\textwidth]{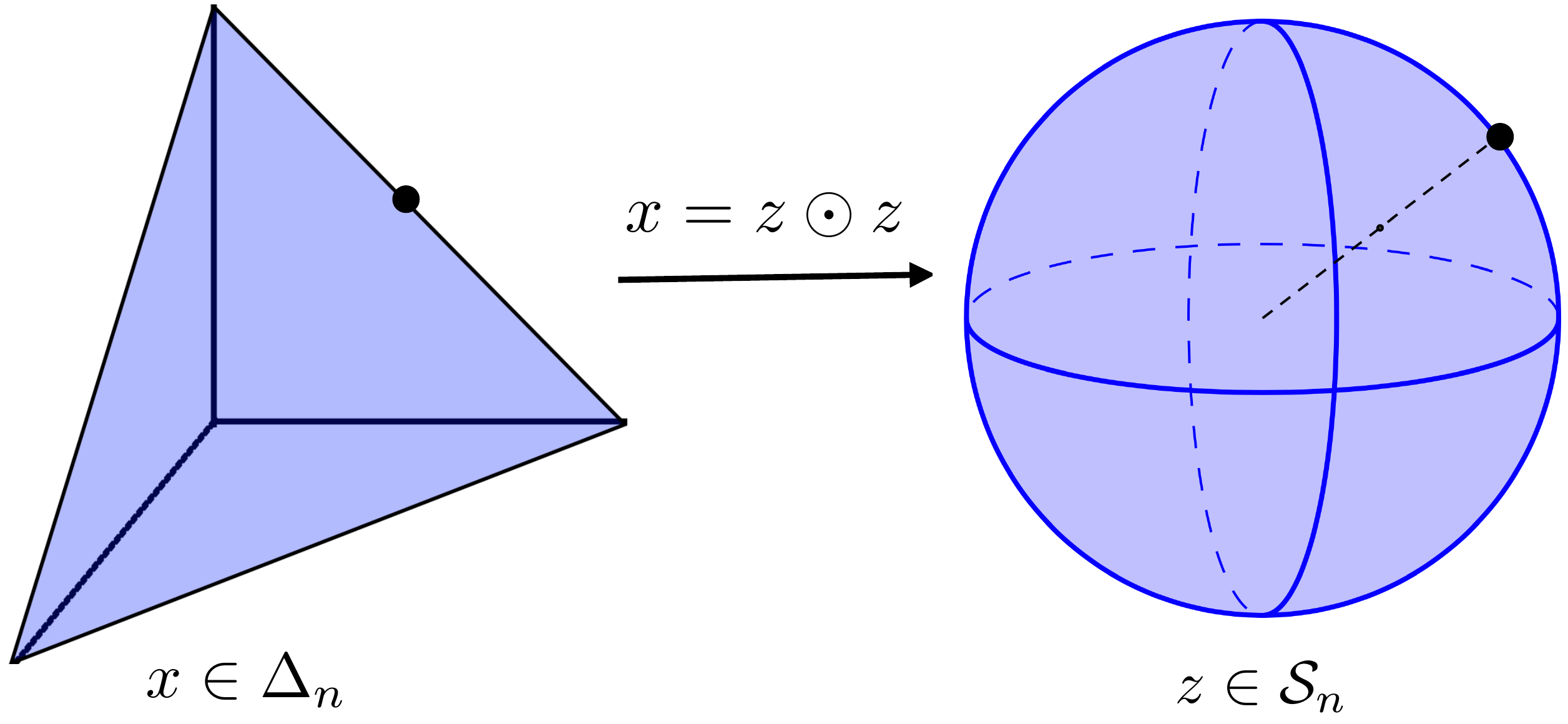}
    \caption{$x\in\Delta_n$ in the original space is transformed as $z\in\mathcal{S}_{n-1}$ in the Hadamard-parametrization space.}
    \label{fig:simplex2ball}
\end{figure}
Meanwhile, we transform \eqref{C_1} into the equivalent problem
\begin{equation}
    \operatorname*{minimize}_{z \in \mathcal{S}_{n-1}} \left\{g(z) := f(z\odot z)\right\}
    \tag{$\mathrm{P_{1b}}$}.
    \label{NC_1}
\end{equation}
Importantly $\mathcal{S}_{n-1}$ is a simple Riemannian manifold, unlike $\Delta_n$. So, techniques from Riemannian optimization 
may be applied to \eqref{NC_1}. Although it is clear the global minimizers of $f$ and $g$ must coincide,\footnote{more accurately, if $z^\star$ is a global minimizer of $g$ then $x^\star = z^\star\odot z^\star$ is a global minimizer of $f$} it is far from obvious what the correspondence between the various solutions of \eqref{C_1} and \eqref{NC_1} should be. We settle this question and show:

\begin{theorem*}[informally stated]
In general, $z^\star$ is a global minimizer, a second-order KKT point, or a strict-saddle point of \eqref{NC_1} if and only if $x^\star = z^\star\odot z^\star$ is a corresponding point of \eqref{C_1}. 

When $f$ is convex, $z^\star$ is a second-order KKT point of \eqref{NC_1} if and only if  $x^\star = z^\star\odot z^\star$is a global minimizer of \eqref{C_1}.
\end{theorem*}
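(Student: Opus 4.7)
The plan is to establish the correspondence by direct computation of the Riemannian gradient and Hessian of $g$ on the sphere and comparison with the KKT conditions for $f$ on the simplex. Writing $x^\star := z^\star\odot z^\star$, I would first compute $\nabla g(z) = 2 z\odot \nabla f(x)$ and $\nabla^2 g(z) = 2\diag(\nabla f(x)) + 4\diag(z)\nabla^2 f(x)\diag(z)$; the Riemannian gradient $\grad g(z^\star) = \nabla g(z^\star) - (z^{\star\top}\nabla g(z^\star))\, z^\star$ vanishes iff $\nabla g(z^\star) = \alpha z^\star$ for some scalar $\alpha$, which componentwise forces $[\nabla f(x^\star)]_i = \alpha/2$ on $N := \{i : x_i^\star > 0\}$. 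This is exactly stationarity of the Lagrangian for \eqref{C_1} with equality multiplier $\lambda := \alpha/2$ and inequality multipliers $\mu_i := [\nabla f(x^\star)]_i - \lambda$ (which vanish on $N$). The global-minimizer correspondence is immediate because $z\mapsto z\odot z$ maps $\mathcal{S}_{n-1}$ onto $\Delta_n$ and preserves objective values.

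The heart of the proof is the second-order correspondence. For any tangent $v\in T_{z^\star}\mathcal{S}_{n-1}$, using $z^{\star\top}v = 0$ and $z^{\star\top}\nabla g(z^\star) = 2x^{\star\top}\nabla f(x^\star) = 2\lambda$ (by complementary slackness), a direct calculation should yield the key identity
\begin{equation*}
\mathrm{Hess}\, g(z^\star)[v,v] \;=\; 2\sum_{i\in A} v_i^2\mu_i \;+\; d^\top\nabla^2 f(x^\star) d, \qquad d := 2 v\odot z^\star,
\end{equation*}
where $A := \{i : x_i^\star = 0\}$. From here, two observations do the work. Plugging $v = e_i$ (tangent precisely because $z_i^\star = 0$ for $i\in A$) yields $\mathrm{Hess}\, g(z^\star)[e_i,e_i] = 2\mu_i$, so the Riemannian PSD condition forces the KKT sign condition $\mu \geq 0$. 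On the complementary subspace $\{v\in T_{z^\star}\mathcal{S}_{n-1}: v|_A = 0\}$, the linear map $v\mapsto d$ is a bijection onto $\{d : \mathbf{1}^\top d = 0,\ d|_A = 0\}$, and the identity collapses to $\mathrm{Hess}\, g(z^\star)[v,v] = d^\top\nabla^2 f(x^\star) d$, matching the active-set-tangent form of the second-order KKT condition for \eqref{C_1}. Strict-saddle points then transfer by contrapositive through the same identity, using either the $v = e_i$ witness when some $\mu_i < 0$ or the bijection when a strict negative direction lives in the active-set-tangent subspace.

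The main obstacle I anticipate is a bookkeeping issue when some inequality constraints are weakly active ($\mu_i = 0$ for some $i\in A$): the classical critical cone on the simplex then strictly contains the active-set-tangent subspace, and the lifting $v\mapsto d$ is not surjective onto it. I would resolve this by adopting the active-set-tangent formulation of the second-order necessary condition, which is the natural one for strict-saddle analysis and, by the identity above, is exactly what the Riemannian Hessian encodes; I would then check that the remaining directions in the full critical cone do not alter the second-order-KKT-versus-strict-saddle dichotomy. Finally, the convex-case upgrade follows as a short corollary: any KKT point of a smooth convex program with linear constraints is a global minimizer, so the first- and second-order correspondence above automatically promotes a sphere-side second-order KKT point to a global minimizer of \eqref{C_1}, with the reverse direction already furnished by the global-minimizer correspondence noted in the first paragraph.
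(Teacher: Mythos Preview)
Your proposal is correct and follows essentially the same route as the paper: the same two-term decomposition of the second-order form appears as the $A+B$ split in \eqref{eq:A_and_B}, the direction $d=e_{k^\star}$ is used to recover the sign condition on the multipliers, and the inverse lift $[d]_i=[u]_i/[z^\star]_i$ in \eqref{eq:construct_d} is exactly your bijection onto the active-set tangent subspace. The only cosmetic difference is that you phrase things via the Riemannian Hessian while the paper works with $\nabla_z^2\mathcal{L}_N$; the paper adopts the active-set-tangent formulation \eqref{eq:KKT:C1:2} as its definition of second-order KKT point from the outset, so the critical-cone obstacle you anticipate simply does not arise.
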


Even though the conversion causes \eqref{NC_1} to be generally non-convex, as long as \eqref{C_1} is convex, \eqref{NC_1} enjoys the computational benefits of convexity.

Leveraging the theorem, we show a perturbed version of Riemannian Gradient Descent \citep{criscitiello2019efficiently,sun2019escaping}, applied to problem \eqref{NC_1}, converges to a second-order KKT point of Problem \eqref{C_1}. In many applications \cite{li2020global,li2019geometry,li2019non,zhu2021global,zhu2018global,sun2018geometric,sun2016complete}, although $f$ is non-convex it turns out that all local minima are in fact global minima. When these global minima are in addition isolated, we show our approach finds an $\varepsilon$-approximate global minimizer in $\mathcal{O}(\log^4 n/\varepsilon)$ iterations. We also extend the results above to different but related constraint sets such as the unit simplex, the weighted simplex, as well as the $\ell_1$ ball\footnote{The details, including their definitions, are given in Section \ref{sec:extensions}}.

Modern gradient methods are not used without adaptation techniques. Hence, we equip our algorithm with line search and compare it with PGD. 
We test an Armijo-Wolfe line search rule, as well as a Barzilei-Borwein (BB) step-size rule with non-monotone line search~\cite{wen2013feasible}. We observe that our proposed algorithm with a BB step-size rule is significantly faster than PGD with line search for a convex test function (underdetermined least squares). 

\paragraph{Notation}
Our original objective function will always be $f$ and by $g$ we always mean the converted version of $f$: $g(z) = f(z\odot z)$. While $\nabla$ is used as the (Euclidean) gradient operator, $\mathrm{grad}$ is reserved for the Riemannian gradient operator. We use $\nabla^2$ and $\mathrm{Hess}$ to denote the (Euclidean) Hessian and Riemannian Hessian operator, respectively. Finally, by $\mathrm{int}(\Delta_n)$ we mean the interior of $\Delta_n$.
 
\paragraph{Hadamard Calculus}
\label{sec:HadCalc}
We recall a few properties of the Hadamard product. We defer all proofs to the appendix. 
\begin{description}
    \item[\namedlabel{H1}{($\mathrm{H1}$)}]  $1_n\odot z = z$ where $1_n$ is the all-one vector in $\R^n$
    \item[\namedlabel{H2}{($\mathrm{H2}$)}] If $d\odot z\odot z = 0$ then $d\odot z = 0$
    \item[\namedlabel{H3}{($\mathrm{H3}$)}] 
        $ \mathrm{diag}(z)d = z\odot d $ and
        $ d^{\top}\mathrm{diag}(z)d = \langle d, z\odot d \rangle= \langle z,d\odot d \rangle $
    \item[\namedlabel{H4}{($\mathrm{H4}$)}] $\|d\odot z\|_2 \leq \|d\|_2\|z\|_{\infty}$

    \item[\namedlabel{H5}{($\mathrm{H5}$)}] $\nabla g(z) \!=\! 2 \nabla f(x) \odot z$   and 
    $\nabla^2 g(z) \!=\! 2\mathrm{diag}(\nabla f(x))+4 \mathrm{diag}(z) \nabla^2 f(x) \mathrm{diag}(z)$
\end{description}

\section{Prior and Related Work}
\label{sec:PriorWork}
\paragraph{Hadamard parametrization} There has been a flurry of recent papers examining the Hadamard parametrization. In particular, we highlight the papers \citep{vavskevivcius2019implicit,zhao2019implicit}, which study the basis--pursuit problem:
\begin{equation}
    \operatorname*{minimize}_x \ \|x\|_1 \quad\mathrm{s.t.}\quad Ax = b.
    \label{eq:Lasso}
\end{equation}
Informally, their approach is to set $x = z\odot z$ and then apply carefully initialized gradient descent to the {\em unconstrained} nonconvex problem:
\begin{equation}
    \operatorname*{minimize}_{z}\ \|Az\odot z - b\|_{2}^{2}
    \label{eq:Sparse_GD}
\end{equation}
Using results on the implicit bias of gradient descent \citep{you2020robust,ali2019continuous,bauer2007regularization,buhlmann2003boosting}, they argue gradient descent applied to \eqref{eq:Sparse_GD} will find a $z^{\star} \in \{z: \ Az\odot z = b\}$ of minimal $\ell_2$ norm. By $\|z\|_2^{2} = \|z\odot z\|_1$, they show $x^{\star} := z^{\star}\odot z^{\star}$ is the solution to \eqref{eq:Lasso}. 

Although inspired by these works, our approach is a distinct application. Instead of exploiting implicit regularization, we convert a non-smooth constraint set into a smooth one. 

Before uploading the second arXiv report of this manuscript, we learnt of the work \cite{levin2022effect}, which considers a generalization of the correspondence between Problems \eqref{C_1} and \eqref{NC_1} considered here. Specializing their results to our case, they show that if $z^\star$ is a second-order stationary point of \eqref{NC_1} then $x^\star := z^\star\odot z^\star$ is a first-order stationary point of \eqref{C_1}. Our main result Theorem~\ref{thm:H1} generalizes this.

\paragraph{Simplex Minimization} Alternative approaches to solving \eqref{C_1} include the Frank-Wolfe algorithm \citep{frank1956algorithm, jaggi2013revisiting}, also known as the conditional gradient algorithm, and mirror descent \citep{ben2001ordered, beck2003mirror}, known in this context as the entropic mirror descent algorithm or the exponentiated gradient algorithm. When $f$ is strongly convex certain variants of Frank-Wolfe achieve a linear convergence rate \citep{lacoste2015global, pedregosa2020linearly} although for merely convex $f$ the convergence rate is sublinear. Interior point methods ({\em e.g.} \citep{koh2007interior}) enjoy a fast convergence rate but, as the cost of each iteration is typically quadratic in $n$, are intractable for high-dimensional problems.\\

\paragraph{Avoiding Strict Saddle Points} Standard theory (\eg \cite[Chpt. 3]{bertsekas1997nonlinear}) shows that when $f$ is convex Projected Gradient Descent (PGD) finds an $\epsilon$-optimal solution to Problem \eqref{C_1} in $\mathcal{O}(1/\epsilon)$ iterations, but our focus is on the case where $f$ is non-convex, where finding $\epsilon$-optimal solutions is known to be NP-hard. For unconstrained, non-convex problems it is by now well-known that perturbed versions of simple first-order methods ({\em e.g.} gradient descent) can avoid strict saddle points---first order stationary points that are not second order stationary points---thus converging to second-order stationary points with high probability \citep{sun2015complete, ge2015escaping,jin2017escape,panageas2019first,carmon2020first,royer2020newton}.  For constrained optimization, the situation is less clear. Indeed consider the general constrained optimization problem with convex constraints:
\begin{equation}
   \operatorname*{minimize}_{x \in \mathcal{C}} f(x) 
\end{equation}
\citep{nouiehed2018convergence} shows that PGD applied to a constrained optimization problem {\em with even a single linear inequality constraint} may converge to a strict saddle point. On the other hand, when $\mathcal{C} = \{x: h_i(x) = 0 \text{ for } i=1,\ldots,m\}$ \citep{ge2015escaping} shows a perturbed version of Projected Gradient Descent converges to a second-order Karush-Kuhn-Tucker point (KKT point for short, see Section ~\ref{sec:LandscapeAnalysis}) while \citep{lu2019perturbed} shows this when $\mathcal{C}$ is a box. When $\mathcal{C}$ is a linear subspace, \cite{avdiukhin2019escaping} shows the noisy sticky projected gradient descent converges to an approximate second-order KKT point if the projector onto the linear subspace is {\em computable}. When $\mathcal{C}$ is a smooth manifold, \citep{criscitiello2019efficiently,sun2019escaping}  show a perturbed version of Riemannian gradient descent avoids (Riemannian) strict saddles. Finally, for general $\mathcal{C}$, \cite{mokhtari2018escaping} provides a hybrid first-second order method guaranteed to converge to a second-order KKT point, although the rate of convergence depends on the complexity of solving non-convex quadratic programs over $\mathcal{C}$. We provide a simple, first-order algorithm with $\mathcal{O}(n)$ per-iteration complexity that converges to a second-order KKT point of Problem \eqref{C_1} with overwhelming probability.

\paragraph{Simplex Projection}  Computing $P_{\Delta_n}$ can be a substantial part of the per-iteration computational cost of PGD \citep{perez2020filtered,condat2016fast,blondel2014large,wang2013projection,chen2011projection}. To the best of our knowledge the fastest algorithm for computing $P_{\Delta_n}$ is that of \citep{perez2020filtered}, which enjoys $\mathcal{O}(n)$ complexity, but involves a number of non-standard computational steps. We also note that the algorithm of \citep{condat2016fast} has $\mathcal{O}(n)$ expected complexity. To the best of the authors' knowledge, all competitive algorithms for computing $P_{\Delta}$ involving a sorting or median selection sub-routine, which are tricky to parallelize. Thus when one needs to accelerate algorithms by parallel and distributed computation, it may be preferable to solve Problem~\eqref{NC_1} instead of Problem~\eqref{C_1}. 

\paragraph{Applications} Typical applications of our results include, but are not limited to, estimation of mixture proportions \citep{keshava2003survey}, probability density estimation \citep{bunea2010spades},  aggregation learning \citep{nemirovski2000ecole}, 
training of support vector machines \citep{clarkson2010coresets},
portfolio optimization \citep{bomze2002regularity}, population dynamics \citep{zeeman1980population}, estimation of crossing numbers in certain classes of graphs \citep{de2006improved}, Bayesian compressive sensing over the simplex \citep{limmer2018neural}, and archetypal analysis in machine learning \citep{bauckhage2015archetypal}.

\section{Riemannian Optimization}
\label{sec:Riem_Opt}
We recall a few notions required for discussing optimization on Riemannian manifolds, specialized to the case of the sphere, $\mathcal{S}_{n-1}$. For any $z\in\mathcal{S}_{n-1}$ the {\em tangent space} is $T_{z}\mathcal{S}_{n-1} = \{v \in \bbR^n: \ v^{\top}z = 0\}$. Let $\operatorname{Proj}_z$ denote the projection onto $T_{z}\mathcal{S}_{n-1}$. One can verify that $\operatorname{Proj}_z(w) = w - (w^{\top}z)z$. For any smooth $g:\mathcal{S}_{n-1} \to \bbR$ the {\em Riemannian gradient} at $z\in\mathcal{S}_{n-1}$ is the projection of the regular gradient onto $T_{z}\mathcal{S}_{n-1}$: $\operatorname{grad}_z g = \operatorname{Proj}_z\nabla g(z)$. Similarly, we may define the {\em Riemannian Hessian} at $z \in \mathcal{S}_{n-1}$ as the operator $\operatorname{Hess}g(z) = \operatorname{Proj}_z\circ \left(\nabla^2g(z) - \nabla g(z)^{\top}z\right)\circ \operatorname{Proj}_z$; see \citep[Sec. 7]{boumal2020introduction} for further details. \\

Given $z\in \mathcal{S}_{n-1}$ and $v\in T_{z}\mathcal{S}_{n-1}$ with $\|v\| = 1$, there exists a unique {\em geodesic} emanating from $z$ in the direction of $v$. Geogescis generalize the role of the straight lines in Euclidean geometry. On $\mathcal{S}_{n-1}$ the geodesics are also known as the great circles.

Define $\gamma_{z,v}(t): \mathbb{R} \to \mathcal{S}_{n-1}$ as the geodesic mapping at $z$ with direction $v\in T_{z}\mathcal{S}_{n-1}$ and the {\em exponential map} at $z\in \mathcal{S}_{n-1}$:
\begin{align*}
    \operatorname{exp}_{z}: & T_{z}\mathcal{S}_{n-1} \to \mathcal{S}_{n-1} \\
    & v \mapsto \gamma_{z,\hat{v}}(\|v\|) \text{ where } \hat{v} := v\big/\|v\|
\end{align*}

Riemannian Gradient Descent or RGD mimics regular ({\em i.e.} Euclidean) gradient descent, except instead of using $\nabla g(z_k)$ we use $\operatorname{grad}g(z_k)$, and instead of stepping along the straight line in the direction $-\nabla g(z_k)$, we move along the geodesic in the direction $-\operatorname{grad}g(z_k)$:
\begin{equation}
    z_{k+1} = \operatorname{exp}_{z_k}(-\alpha_k\operatorname{grad}g(z_k))
\end{equation}
By construction, $z_{k+1} \in \mathcal{S}_{n-1}$, so RGD is a {\em feasible algorithm}. Algorithm~\ref{alg:HadRGD}, which we dub HadRGD, describes how to apply RGD to \eqref{C_1} via the Hadamard parametrization. 
The converted objective $g$ inherits the smoothness of $f$:

\begin{lemma}
If $f$ is $L$-Lipschitz differentiable, then $g$ is $\tilde{L}$-Lipschitz differentiable with $\tilde{L} = 4L + 2M$ where $M = \max_{x\in \Delta_n}\|\nabla f(x)\|_{\infty}$.
\label{lemma:Lipschitz}
\end{lemma}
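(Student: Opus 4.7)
The strategy is to work directly from the gradient formula in \ref{H5}, namely $\nabla g(z) = 2\nabla f(z\odot z)\odot z$, and estimate $\|\nabla g(z_1) - \nabla g(z_2)\|_2$ for $z_1, z_2 \in \mathcal{S}_{n-1}$ via a standard add-and-subtract decomposition. Writing $x_i = z_i\odot z_i$, I would rewrite
\begin{equation*}
\nabla g(z_1) - \nabla g(z_2) = 2\,\nabla f(x_1)\odot(z_1 - z_2) + 2\,\bigl(\nabla f(x_1) - \nabla f(x_2)\bigr)\odot z_2,
\end{equation*}
so that one term captures the variation in the outer factor $z$ and the other captures the variation in the inner factor $\nabla f$.

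The next step is to bound each piece using property \ref{H4}. For the first piece, $\|2\nabla f(x_1)\odot(z_1 - z_2)\|_2 \le 2\|\nabla f(x_1)\|_\infty \|z_1 - z_2\|_2 \le 2M\|z_1 - z_2\|_2$, since $x_1 \in \Delta_n$. For the second piece, $\|2(\nabla f(x_1)-\nabla f(x_2))\odot z_2\|_2 \le 2\|z_2\|_\infty\|\nabla f(x_1) - \nabla f(x_2)\|_2 \le 2L\|x_1 - x_2\|_2$, where I have used $\|z_2\|_\infty \le \|z_2\|_2 = 1$ together with the $L$-Lipschitz assumption on $\nabla f$.

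The remaining task is to relate $\|x_1 - x_2\|_2$ back to $\|z_1 - z_2\|_2$. The key algebraic identity is the factorization $z_1\odot z_1 - z_2\odot z_2 = (z_1 + z_2)\odot (z_1 - z_2)$, after which \ref{H4} together with the triangle inequality gives $\|x_1 - x_2\|_2 \le \|z_1 + z_2\|_\infty\|z_1 - z_2\|_2 \le 2\|z_1 - z_2\|_2$, again using $\|z_i\|_\infty \le 1$ on the sphere. Substituting this back yields a bound of $4L\|z_1 - z_2\|_2$ on the second piece, and combining with the first piece gives the claimed Lipschitz constant $\tilde{L} = 4L + 2M$.

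I do not anticipate a serious obstacle; the only subtlety is conceptual rather than technical, namely that the relevant Lipschitz estimate must be taken with respect to $z \in \mathcal{S}_{n-1}$ (so that $\|z\|_\infty \le 1$ and $x = z\odot z \in \Delta_n$, ensuring $\|\nabla f(x)\|_\infty \le M$). One should also note that $M$ is finite because $\Delta_n$ is compact and $f$ is $C^1$, so the statement is not vacuous.
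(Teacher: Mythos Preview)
Your proposal is correct and follows essentially the same route as the paper's proof: the same add-and-subtract decomposition of $\nabla g(z_1)-\nabla g(z_2)$, the same use of \ref{H4} with $\|z_i\|_\infty\le 1$, and the same bound $\|z_1\odot z_1 - z_2\odot z_2\|_2\le 2\|z_1-z_2\|_2$ (the paper proves the latter via a separate add-and-subtract lemma rather than your one-line factorization $(z_1+z_2)\odot(z_1-z_2)$, but this is cosmetic).
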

See Appendix~\ref{app:RiemGeom} for the proof. As $\nabla f(x)$ is continuous and $\Delta_n$ is compact, $M < \infty$.

\section{Landscape Analysis}
\label{sec:LandscapeAnalysis}

We use the first- and second-order KKT conditions to establish the one-to-one correspondence between second-order KKT points of \eqref{C_1} and \eqref{NC_1}. 

\paragraph{First-order KKT conditions on $x^\star$}
The Lagrangian of \eqref{C_1} is given by
\begin{align}
     \mathcal{L}_C(x,\lambda,\beta) = f(x) - \lambda (1_n^\top x-1) - \beta^\top x 
\end{align}
where $\lambda\in\R,\beta\in\R^n$ are Lagrangian multipliers.
The first-order KKT conditions of \eqref{C_1} are: there exist $\lambda^\star\in\mathbb{R}$ and $\beta^\star\in\mathbb{R}^n$ such that 
\begin{subequations}
\begin{align}
\nabla f(x^\star) &= \lambda^\star 1_n + \beta^\star \label{KKT:C1:1}
 \\
 x^\star& \ge 0
 \label{KKT:C1:2}\\
 \beta^\star &\ge 0 \label{KKT:C1:3}
 \\
 x^\star\odot \beta^\star &=0 \label{KKT:C1:4}
 \\
 1_n^\top x^\star &=1 \label{KKT:C1:5}
\end{align}
\label{KKT:C1}
\end{subequations}

\paragraph{Second-order KKT conditions on $x^\star$}
As
\begin{equation}
    \nabla^2_x\mathcal{L}_{C}(x,\lambda,\beta) = \nabla^2_xf(x) 
\end{equation}
the second-order KKT conditions here are simply
\begin{equation}
    u^\top\nabla^2f(x^\star)u \geq 0 
    \label{eq:KKT:C1:2}
\end{equation}
for all $u$ satisfying $1_n^\top u = 0$ and $[u]_i = 0$ if $[x^\star]_i = 0$.

\paragraph{First-order KKT conditions on $z^\star$}
The Lagrangian of \eqref{NC_1} is
\begin{align}
    \mathcal{L}_N(z,\lambda_N) = g(z) - \lambda_N (\|z\|_2^2-1)
\end{align}
The first-order KKT conditions of \eqref{NC_1} are: there exists $\lambda_N^\star\in\mathbb{R}$ such that 
\begin{subequations}
\begin{align}
\nabla f(z^\star\odot z^\star)\odot z^\star &= \lambda^\star_N z^\star \label{KKT:NC1:1}
 \\
\|z^\star\|^2_{2} &=1 \label{KKT:NC1:2}
\end{align}
\label{KKT:NC1}
\end{subequations}

\paragraph{Second-order KKT conditions on $z^\star$}
Using \ref{H5}, the second-order KKT conditions of \eqref{NC_1} are: there exists $\lambda_N^\star\in\mathbb{R}$ such that $(z^\star,\lambda^\star_N)$ satisfies \eqref{KKT:NC1:1} and \eqref{KKT:NC1:2} and for all $d \bot z^\star$,
\begin{align}
&d^{\top}\left[\nabla_z^2 \mathcal{L}_N(z^\star,\lambda_N^\star)\right]d\ge 0 
\nonumber\\
\iff&
d^{\top}\bigg[2\mathrm{diag}(\nabla f(z^\star\odot z^\star)) +4\mathrm{diag}(z^\star) \nabla^2 f(z^\star\odot z^\star) \mathrm{diag}(z^\star) - 2\lambda_N^\star I_n \bigg]d \ge 0
\nonumber\\
\overset{\text{\ref{H3}}}{\iff}&
 2\langle\nabla f(z^\star\odot z^\star), d\odot d\rangle  - 2\lambda_N^\star \|d\|^2
+  4 (z^\star\odot d)^\top \nabla^2 f(z^\star\odot z^\star)(z^\star\odot d) \ge 0 
\label{KKT2:NC1}
\end{align}
where $\nabla_z^2(\cdot)$ denotes the partial Hessian with respect to $z$. \\


\paragraph{Strict saddle points} We say $x^\star$ is a strict saddle point of \eqref{C_1} if it satisfies the first-order KKT conditions \eqref{KKT:C1:1}--\eqref{KKT:C1:5} but there exists a $u\in \mathbb{R}^n$ satisfying $1_n^\top u = 0$ and $[u]_i = 0$ if $[x^\star]_i = 0$ and
\begin{equation}
    u^\top\nabla^2f(x^\star)u < 0
\end{equation}
Similarly, we say $z^\star$ is a strict saddle point of \eqref{NC_1} if it satisfies the first-order KKT conditions \eqref{KKT:NC1:1} and \eqref{KKT:NC1:2} yet there exists $d\bot z^\star$ such that
\begin{equation}
d^\top \left[\nabla^2_z \mathcal{L}_N(z^\star,\lambda^\star_N)\right] d<0.  \label{eqn:strict:saddle} 
\end{equation}
Note that any first-order KKT point must be either a strict saddle point or a second-order KKT point. In particular, when every second-order KKT point is a local minimizer, we say the problem satisfies the {\em strict saddle property}. 

We now provide our main landscape analysis results.
\begin{theorem}
\label{thm:H1}
\begin{enumerate}
    \item Suppose that $x^\star$ is a second-order KKT point of \eqref{C_1}. Then, all $z^\star$ satisfying $z^\star\odot z^\star = x^\star$ are second-order KKT points of \eqref{NC_1}.
    
    \item Conversely, suppose $z^\star$ is a second-order KKT point of \eqref{NC_1}. Then, $x^\star = z^\star\odot z^\star$ is a second-order KKT point of \eqref{C_1}.
\end{enumerate}
\end{theorem}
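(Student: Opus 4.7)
The proof naturally splits into the two implications of the theorem, and both directions are driven by the correspondence $x = z\odot z$, the Hadamard calculus properties \ref{H2}, \ref{H3}, \ref{H5}, and a direct translation between the Lagrange multipliers ($\lambda^\star \leftrightarrow \lambda^\star_N$, with $\beta^\star = \nabla f(x^\star) - \lambda^\star 1_n$).

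For the forward direction, given a second-order KKT point $x^\star$ of \eqref{C_1} with multipliers $(\lambda^\star, \beta^\star)$, set $\lambda_N^\star := \lambda^\star$. Since $\beta^\star \odot z^\star \odot z^\star = \beta^\star \odot x^\star = 0$, property \ref{H2} yields $\beta^\star \odot z^\star = 0$; combined with $\nabla f(x^\star) = \lambda^\star 1_n + \beta^\star$ this instantly gives \eqref{KKT:NC1:1}, while $\|z^\star\|^2 = 1$ is the simplex condition. For the second-order part, any test direction $d \perp z^\star$ is pushed down to $u := z^\star\odot d$, which satisfies $1_n^\top u = \langle z^\star, d\rangle = 0$ and $[u]_i = 0$ whenever $[x^\star]_i = 0$, hence is admissible for \eqref{eq:KKT:C1:2}. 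Expanding \eqref{KKT2:NC1} and using $\beta^\star\odot z^\star = 0$, the linear terms collapse to $2\langle \beta^\star, d\odot d\rangle \ge 0$ and the quadratic term becomes $4\,u^\top \nabla^2 f(x^\star)\,u \ge 0$ by the second-order KKT condition on $x^\star$.

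For the converse, given a second-order KKT point $z^\star$ of \eqref{NC_1}, set $\lambda^\star := \lambda_N^\star$ and $\beta^\star := \nabla f(x^\star) - \lambda^\star 1_n$. Equation \eqref{KKT:NC1:1} gives $\beta^\star \odot z^\star = 0$, which delivers complementary slackness $\beta^\star \odot x^\star = 0$ and forces $[\beta^\star]_i = 0$ on the support of $z^\star$. The delicate step is the sign condition $\beta^\star \ge 0$: for each index $i$ with $[z^\star]_i = 0$ I plug the test direction $d = e_i$ into \eqref{KKT2:NC1}; the Hessian term vanishes because $z^\star \odot e_i = 0$, and the inequality reduces to $[\nabla f(x^\star)]_i - \lambda^\star \ge 0$, i.e., $[\beta^\star]_i \ge 0$. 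Finally, to verify the second-order condition on $x^\star$, I lift any admissible $u$ to $d$ via $[d]_i := [u]_i/[z^\star]_i$ on the support of $z^\star$ and zero elsewhere, so that $z^\star \odot d = u$ and $d \perp z^\star$. The first-order condition on $z^\star$ forces $[\nabla f(x^\star)]_i = \lambda^\star$ throughout the support of $z^\star$, hence $\langle \nabla f(x^\star), d\odot d\rangle = \lambda^\star \|d\|^2$; the two linear terms in \eqref{KKT2:NC1} cancel and leave $4\,u^\top \nabla^2 f(x^\star)\,u \ge 0$.

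The main conceptual obstacle is recognizing that in the converse direction, the nonnegativity $\beta^\star \ge 0$ is \emph{not} a first-order consequence of the first-order condition on $z^\star$; it is genuinely supplied by the second-order condition, applied at test directions $e_i$ that are \emph{normal} to $\Delta_n$ at $x^\star$ but \emph{tangent} to $\mathcal{S}_{n-1}$ at $z^\star$. Once this mismatch between the tangent cones is handled, the remainder is bookkeeping with the Hadamard identities and the lifting/projection $u \leftrightarrow d$.
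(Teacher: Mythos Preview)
Your proposal is correct and follows essentially the same route as the paper's proof: the same choice $\lambda_N^\star = \lambda^\star$, the same use of \ref{H2} to kill $\beta^\star\odot z^\star$, the same push-down $u = z^\star\odot d$ in the forward direction, the same test directions $d = e_i$ to extract $\beta^\star \ge 0$ from the second-order condition in the converse, and the same lift $[d]_i = [u]_i/[z^\star]_i$ for the second-order check on $x^\star$. The only cosmetic difference is that the paper phrases the sign argument for $\beta^\star$ as a contradiction, whereas you obtain the inequality directly; your closing remark about the tangent-cone mismatch is exactly the point the paper exploits.
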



\begin{proof}

\begin{enumerate} 
    \item Suppose $z^\star$ satisfies $z^\star\odot z^\star = x^\star$. We first show $z^\star$ satisfies the first-order KKT conditions \eqref{KKT:NC1:1} and \eqref{KKT:NC1:2}. Multiplying both sides of the optimality condition \eqref{KKT:C1:1} by $z^\star$:
    \begin{align}
        & \nabla f(x^{\star})\odot z^{\star} = \lambda^{\star}1_n\odot z^{\star} + \beta^{\star}\odot z^{\star} \\
   \overset{\text{\ref{H1}}}{\Longrightarrow} &  \nabla f(z^{\star}\odot z^{\star})\odot z^{\star} = \lambda^{\star} z^{\star} + \beta^{\star}\odot z^{\star}   \label{eq:Reduce_to_KKT1_NVX}
    \end{align}
    By complementary slackness \eqref{KKT:C1:4}: $z^\star\odot z^\star\odot\beta^\star=0$, hence $z^\star\odot\beta^\star=0$ by \ref{H2}. Thus \eqref{eq:Reduce_to_KKT1_NVX} reduces to \eqref{KKT:NC1:1} by choosing $\lambda_N^\star=\lambda^\star$. Note \eqref{KKT:C1:5} is equivalent to \eqref{KKT:NC1:2} as
    \[
        1 = 1_n^{\top}x^{\star} = 1_n^{\top}(z^{\star}\odot z^{\star}) = \sum_{i=1}^{n}(z_i^{\star})^2 = \|z^{\star}\|_2^{2}
    \]
    
    It remains to show the second-order KKT conditions \eqref{KKT2:NC1} of \eqref{NC_1} with $\lambda_N^\star=\lambda^\star$. Since $x^\star$ satisfies \eqref{KKT:C1:1} and \eqref{KKT:C1:3}, we have
    \begin{align}
        & \nabla f(x^\star) = \lambda^\star 1_n + \beta^\star ,\ \beta^{\star}\ge 0
       \\ 
       \iff &  \langle \nabla f(z^\star\odot z^\star), d\odot d\rangle - \langle \lambda^\star 1_n, d\odot d \rangle = \langle \beta^\star, d\odot d \rangle \text{ as } x^\star = z^\star\odot z^\star \\
       \iff &  \langle \nabla f(z^\star\odot z^\star), d\odot d\rangle - \langle \lambda^\star 1_n, d\odot d \rangle = \sum_i [\beta^\star]_i[d]_i^2 \geq 0 \text{ as } \beta^\star \geq 0 \label{eq:With_beta} \\
       \iff & \langle \nabla f(z^\star\odot z^\star), d\odot d\rangle - \lambda^\star \|d\|^2 \ge 0,\ \forall d 
        \label{gradPSD}
    \end{align}

    Plugging this into \eqref{KKT2:NC1}, we get for any $d\in\mathbb{R}^n$ satisfying $d \perp z^\star$
    \begin{align}
     & 2\langle\nabla f(z^\star\odot z^\star), d\odot d\rangle - 2\lambda^\star \|d\|^2 + 4(z^\star\odot d)^\top \nabla^2 f(z^\star\odot z^\star)(z^\star\odot d) \nonumber \\
     & \ge 4(z^\star\odot d)^\top \nabla^2 f(z^\star\odot z^\star)(z^\star\odot d)
     \label{H1:PSD}
    \end{align}
    We now show $(z^\star\odot d)^\top \nabla^2 f(z^\star\odot z^\star)(z^\star\odot d) \geq 0$. Define $u = z^\star\odot d$ and observe
    \begin{equation}
        1_n^{\top}u = \sum_{i=1}^n [u]_i = \sum_{i=1}^n [z^\star]_i[d]_i = \langle z^\star,d\rangle = 0 \quad \text{ as } d \perp z^\star
    \end{equation}
    Moreover, if $[x^\star]_i = 0$ then $[z^\star]_i = 0$ and so $[u]_i = 0$. Appealing to \eqref{eq:KKT:C1:2} we observe $u^\top\nabla^2f(x^\star)u \geq 0$. Returning to \eqref{H1:PSD} we conclude that the second-order optimality condition \eqref{KKT2:NC1} indeed holds. 
    
  \item We show $x^\star = z^\star \odot z^\star$ is a second-order KKT point of \eqref{C_1}. From \eqref{KKT:NC1:2} we deduce
    \begin{equation}
       x^\star \ge 0 \text{ and } 1_n^\top x^\star = 1. \label{eq:KKT_1_temp}
    \end{equation}
    That is, $x^\star$ satisfies the KKT conditions \eqref{KKT:C1:2} and \eqref{KKT:C1:5} of \eqref{C_1}. Using the relation
    \begin{equation}
        \lambda_N^\star \|d\|^2 = \lambda_N\sum_{i=1}^n[d]_i^2 = \lambda_N\sum_{i=1}^n[d\odot d]_i = \langle \lambda_N 1_n,d\odot d\rangle
    \end{equation}
    we rewrite the second-order KKT condition \eqref{KKT2:NC1} as
   \begin{align}
       0 &\leq 4(z^\star\odot d)^\top \nabla^2 f(x^\star)(z^\star\odot d) +2\langle\nabla f(x^\star), d\odot d\rangle - 2\lambda_N^\star \|d\|^2 \\
       & = \underbrace{4(z^\star\odot d)^\top \nabla^2 f(x^\star)(z^\star\odot d)}_{=A} +\underbrace{2\langle\nabla f(x^\star) - \lambda_N^\star 1_n, d\odot d\rangle}_{=B} \label{eq:A_and_B}
   \end{align}
    for any $d \perp z^\star$. Since $z^\star$ satisfies \eqref{KKT:NC1:1}, there exists $\lambda_N^\star\in\mathbb{R}$ such that
   \begin{align}
       & \nabla f(x^\star) \odot z^\star = \lambda_N^\star z^\star \\
       \Rightarrow & [\nabla f(x^\star)]_k[z^{\star}]_k = \lambda_N^\star [z^\star]_k \\
       \Rightarrow & [\nabla f(x^\star)]_k = \lambda_N^\star \text{ for all } k \text{ with } [z^\star]_k \neq 0 \\
       \Rightarrow & [\nabla f(x^\star)]_k - \lambda_N^\star = 0 \text{ for all } k \text{ with } [x^\star]_k \neq 0 \label{eq:beta_setup} 
   \end{align}
    Define $\beta^\star = \nabla f(x^\star) - \lambda_{N}^\star 1_n$. We show $\beta^\star$ is non-negative (the optimality condition \eqref{KKT:C1:3}). Indeed, we already have that  $[\beta^\star]_k = 0$ when $[x^\star]_k \neq 0$ (equivalently $[z^\star]_k \neq 0$) by \eqref{eq:beta_setup}. Now suppose, for the sake of contradiction, there exists an index $k^\star$ such that $[z^\star]_{k^\star} = 0$ and $[\beta^\star]_{k^\star} < 0$. Let $d = e_{k^\star}$. Observe that $d\perp z^\star$ and $z^\star\odot d = 0$. Substituting this $d$ into \eqref{eq:A_and_B} we observe that $A=0$ and $B < 0$. This contradicts the fact that $A + B \geq 0$, hence such an index $k^\star$ cannot exist and so $\beta^\star \geq 0$. Taking $\lambda^\star = \lambda_N^\star$ and rearranging the definition of $\beta^\star$ yields \eqref{KKT:C1:1}. Finally, complementary slackness \eqref{KKT:C1:4} holds as $[\beta^\star]_k = 0$ whenever $[x^\star]_k > 0$, by \eqref{eq:beta_setup}. So, $x^\star$ satisfies all of the first-order KKT conditions. \\
   
   We now show $x^\star$ also satisfies the second-order optimality condition. So, consider any $u \in \mathbb{R}^n$ satisfying $1_n^\top u = 0$ and $[u]_i = 0$ for all $[x^\star]_i = 0$ (equivalently $[z^\star]_i = 0$). Define a corresponding vector $d \in \mathbb{R}^n$ as
   \begin{equation}
       [d]_i = \left\{\begin{array}{cc} \frac{[u]_i}{[z^\star]_i} & \text{ if } [z^\star]_i \neq 0 \\ 0 & \text{ otherwise } \end{array}\right. \label{eq:construct_d}
   \end{equation}
 Then $z^\star \perp d$ as
   \begin{align}
        \langle d, z^\star \rangle &= \sum_{i=1}^n [d]_i[z^\star]_i \\
        &= \sum_{i: [z^\star]_i = 0}0 + \sum_{i: [z^\star]_i \neq 0}\left(\frac{[u]_i}{[z^\star]_i}\right)[z^\star]_i \\
        & = \sum_{i: [z^\star]_i = 0} [u]_i + \sum_{i: [z^\star]_i \neq 0} [u]_i \quad \text{ as } [z^\star]_i = 0 \Rightarrow [u^\star]_i = 0 \\
        & = \sum_{i=1}^n [u]_i = 1_n^\top u = 0.
   \end{align}
  Appealing once more to \eqref{eq:A_and_B} and recalling that $[\beta^\star]_i = 0$ for all $i$ with $[z^\star]_i \neq 0$ \eqref{eq:beta_setup}: 
   \begin{align}
       B &= 2\langle \beta^\star, d\odot d\rangle = 2\sum_{i: [z^\star]_i \neq 0}[\beta^{\star}]_i\left(\frac{[u]_i}{[z^\star]_i}\right)^2 = 0 \\
       A &= 4u^\top \nabla^2 f(x^\star) u, \label{eq:Simplify_A}
   \end{align}
   where \eqref{eq:Simplify_A} follows from the fact that $z^\star\odot d = u$ as $[z]_i = 0 \Rightarrow [u]_i = 0$. So, from \eqref{eq:A_and_B} we deduce
   \begin{equation}
       0 \leq A + B  = 4u^\top \nabla^2 f(z^\star\odot z^\star) u
   \end{equation}
   {\em i.e.} the second-order optimality condition \eqref{eq:KKT:C1:2} holds. Thus $x^\star = z^\star\odot z^\star$ is indeed a second-order KKT point of \eqref{C_1}.
\end{enumerate} 
\end{proof}

We now deduce a series of corollaries:
\begin{corollary}
Problem \eqref{C_1} has the strict saddle property if and only if \eqref{NC_1} has the strict saddle property.
\end{corollary}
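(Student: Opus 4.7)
The plan is to combine Theorem~\ref{thm:H1}, which gives a correspondence between second-order KKT points of the two problems, with the continuity of the map $z\mapsto z\odot z$ and of its ``signed square root'' partial inverse, in order to transfer the local-minimizer property in both directions. Recall that the strict saddle property is equivalent to the statement ``every second-order KKT point is a local minimizer,'' and Theorem~\ref{thm:H1} already matches up the second-order KKT sets; only the local-min half needs work.

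For the implication ``\eqref{C_1} has the strict saddle property $\Rightarrow$ \eqref{NC_1} has the strict saddle property,'' let $z^\star$ be a second-order KKT point of \eqref{NC_1}. By part~2 of Theorem~\ref{thm:H1}, $x^\star := z^\star\odot z^\star$ is a second-order KKT point of \eqref{C_1}, hence a local minimizer of $f$ on $\Delta_n$ by the assumed strict saddle property. I would then bound $\|z\odot z - z^\star\odot z^\star\|_2$ for $z\in\mathcal{S}_{n-1}$ via the factorization $z\odot z - z^\star\odot z^\star = (z-z^\star)\odot(z+z^\star)$ and property~\ref{H4}:
\begin{equation*}
\|z\odot z - z^\star\odot z^\star\|_2 \;\leq\; \|z-z^\star\|_2\cdot\|z+z^\star\|_\infty \;\leq\; 2\|z-z^\star\|_2,
\end{equation*}
using $\|z\|_\infty\leq\|z\|_2=1$ on $\mathcal{S}_{n-1}$. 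Thus any sufficiently small neighborhood of $z^\star$ maps into the neighborhood of $x^\star$ on which $f(x)\geq f(x^\star)$, and so $g(z)=f(z\odot z)\geq f(x^\star)=g(z^\star)$, proving $z^\star$ is a local minimizer of $g$.

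For the converse, let $x^\star$ be a second-order KKT point of \eqref{C_1}; choose any $z^\star\in\mathcal{S}_{n-1}$ with $z^\star\odot z^\star = x^\star$ (such $z^\star$ exists, e.g.\ $[z^\star]_i := \sqrt{[x^\star]_i}$). Part~1 of Theorem~\ref{thm:H1} says $z^\star$ is a second-order KKT point of \eqref{NC_1}, hence a local minimizer of $g$ by assumption. For $x\in\Delta_n$ near $x^\star$, I would define
\begin{equation*}
[z]_i := \operatorname{sign}\!\bigl([z^\star]_i\bigr)\sqrt{[x]_i},
\end{equation*}
with the convention $\operatorname{sign}(0)=+1$ when $[z^\star]_i=0$. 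Then $z\in\mathcal{S}_{n-1}$ and $z\odot z = x$, so $f(x)=g(z)$. To conclude $f(x)\geq f(x^\star)$ I need $z\to z^\star$ as $x\to x^\star$, which follows coordinatewise from continuity of $t\mapsto\sqrt{t}$ on $[0,\infty)$: at indices with $[x^\star]_i>0$ the signed square root is smooth, and at indices with $[x^\star]_i=0$ (equivalently $[z^\star]_i=0$) one has $|[z]_i|=\sqrt{[x]_i}\to 0=[z^\star]_i$.

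The main (and quite modest) obstacle is precisely the continuity-at-zero point above: at coordinates where $[x^\star]_i=0$, the square-root reconstruction is only H\"older-$1/2$, so the inverse map $x\mapsto z$ is not Lipschitz near $x^\star$. This prevents a symmetric ``quantitative'' argument to the one used in the forward direction, but plain continuity is all that is needed for the local-minimizer property to transfer, so the argument still closes cleanly.
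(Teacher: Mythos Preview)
Your proof is correct and matches the paper's approach: use Theorem~\ref{thm:H1} to identify second-order KKT points, then transfer local-minimality via continuity of $z\mapsto z\odot z$ for the forward direction and of the signed square root for the converse. The paper's forward argument uses the coordinate-wise bound $|[z]_i^2-[z^\star]_i^2|\le 2|[z]_i-[z^\star]_i|$ rather than your $\ell_2$ estimate via \ref{H4}, and it dispatches the converse with ``the proof of the converse is similar''; your version spells that direction out more carefully, correctly noting that only continuity (not Lipschitz continuity) of the square root is needed.
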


\begin{proof}
First, suppose that \eqref{C_1} has the strict saddle property. If $z^\star$ is a second-order KKT point, then so is $x^\star = z^\star\odot z^\star$, by Theorem~\ref{thm:H1}. Hence, $x^\star$ is a local minimizer, and so choosing $\varepsilon$ sufficiently small we have that
\begin{equation}
    f(x^\star) \leq f(x) \quad \text{ for all } x \text{ with } \left|[x]_i - [x^\star]_i\right| < \varepsilon \ i=1,\ldots, n. \label{eq:x_local_min}
\end{equation}
Now, for any $z$ with $|[z]_i - [z^\star]_i| < \varepsilon/2$ for all $i=1,\ldots, n$ define $x = z\odot z$ and observe that
\begin{equation}
    \left|[x]_i - [x^\star]_i\right| = \left|[z]_i^2 - [z^\star]_i^2\right| = \left|[z]_i - [z^\star]_i\right|\left|[z]_i + [z^\star]_i\right| \leq 2\left|[z]_i - [z^\star]_i\right| < \varepsilon
\end{equation}
whence from \eqref{eq:x_local_min} we obtain
\begin{equation}
    g(z^\star) = f(x^\star) \leq f(x) = g(z),
\end{equation}
{\em i.e.} $z^\star$ is a local minimizer. The proof of the converse is similar.
\end{proof}

\begin{corollary}
Suppose $f$ in \eqref{C_1} is convex. Then $z^\star$ is a second-order KKT point of \eqref{NC_1} if and only if $x^\star = z^\star\odot z^\star$ is a global minimizer of \eqref{C_1}
\end{corollary}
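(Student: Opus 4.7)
The plan is to factor the proof through Theorem~\ref{thm:H1}, which already establishes the bijection between second-order KKT points of (NC\_1) and (C\_1) via the map $z \mapsto z\odot z$. Once that is invoked, the corollary reduces to the purely convex-analytic statement: when $f$ is convex, $x^\star$ is a second-order KKT point of (C\_1) if and only if $x^\star$ is a global minimizer of (C\_1). Proving this equivalence is the remaining work.

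For the forward direction, suppose $z^\star$ is a second-order KKT point of (NC\_1). Theorem~\ref{thm:H1}(2) says $x^\star := z^\star\odot z^\star$ is a second-order KKT point of (C\_1); in particular it satisfies the first-order KKT conditions \eqref{KKT:C1:1}--\eqref{KKT:C1:5}. Since $f$ is convex and $\Delta_n$ is a convex set, (C\_1) is a convex program, so any first-order KKT point is automatically a global minimizer. Hence $x^\star$ minimizes $f$ over $\Delta_n$.

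For the converse, suppose $x^\star = z^\star\odot z^\star$ is a global minimizer of (C\_1). Because the constraints defining $\Delta_n$ are all affine, Slater/LICQ-type qualifications hold trivially, so the first-order KKT conditions \eqref{KKT:C1:1}--\eqref{KKT:C1:5} are satisfied at $x^\star$. The second-order KKT condition \eqref{eq:KKT:C1:2} is immediate: convexity of $f$ gives $\nabla^2 f(x^\star) \succeq 0$, so $u^\top \nabla^2 f(x^\star) u \geq 0$ for \emph{every} $u$, not just those in the critical cone. Thus $x^\star$ is a second-order KKT point of (C\_1), and Theorem~\ref{thm:H1}(1) then upgrades this to the conclusion that $z^\star$ is a second-order KKT point of (NC\_1).

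I do not anticipate a significant obstacle here, since both directions are short once Theorem~\ref{thm:H1} is in hand; the only subtlety worth flagging explicitly in the write-up is that the jump from ``first-order KKT'' to ``global minimizer'' in the forward direction requires convexity of both $f$ and the feasible set $\Delta_n$, and that the affine nature of the simplex constraints means no additional constraint qualification assumptions need be imposed for the converse.
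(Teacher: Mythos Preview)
Your proposal is correct and follows essentially the same route as the paper: invoke Theorem~\ref{thm:H1} to pass between second-order KKT points of \eqref{NC_1} and \eqref{C_1}, then use convexity to identify (first- or second-order) KKT points of \eqref{C_1} with global minimizers. The paper states this in one line, while you spell out the supporting details (affine constraints obviate constraint qualifications; convexity gives $\nabla^2 f \succeq 0$ for the second-order condition), but the argument is the same.
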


\begin{proof}
This follows immediately from the fact that $x^\star$ is a global minimizer of \eqref{C_1} if and only if $x^\star$ is a second-order (in fact, first-order suffices) KKT point.
\end{proof}

\section{Non-degeneracy}

We define a notion of strict, or non-degenerate, KKT points.

\begin{definition}
  We say $x^\star$ is a non-degenerate second-order KKT point of \eqref{C_1} if there exist $\lambda^\star \in \mathbb{R}$ and $\beta^\star \in \mathbb{R}^n$ such that the first-order KKT conditions \eqref{KKT:C1:1}--\eqref{KKT:C1:3} and \eqref{KKT:C1:5} are satisfied, {\em strict complementary slackness} holds:
  \begin{equation}
      x^\star\odot \beta^\star = 0 \text{ and } \beta^\star_j > 0 \text{ for all } j \text{ with } x^\star_j = 0 \tag{8d$^\prime$},
  \end{equation}
  and the second-order KKT condition \eqref{eq:KKT:C1:2} holds {\em with strict inequality} for all $u \neq 0$ satisfying $1_n^\top u = 0$ and $[u]_i = 0$ if $[x^\star]_i = 0$.
  \label{Def:non-degen_KKT}
\end{definition}

By \cite[Prop.4.3.2]{bertsekas1997nonlinear} all non-degenerate second-order KKT points are strict ({\em i.e.} isolated) local minima. We remark that the strict complementary slackness is necessary for this characterization (see \cite[Example 4.3.2]{bertsekas1997nonlinear}. 

\begin{definition}
  We say $z^\star$ is a non-degenerate KKT point of \eqref{NC_1} if there exists $\lambda_N^\star$ such that the first-order KKT conditions \eqref{KKT:NC1:1} and \eqref{KKT:NC1:2} are satisfied and the second-order KKT condition \eqref{KKT2:NC1} holds {\em with strict inequality} for all $d \neq 0$ satisfying $d\bot z^\star$.
\end{definition}

As for the case of (non-strict) second-order KKT points, we can compare non-degenerate second-order KKT points for \eqref{C_1} and \eqref{NC_1}.

\begin{theorem}
    Suppose that $x^\star$ is a non-degenerate second-order KKT point of \eqref{C_1}. Then, all $z^\star$ satisfying $z^\star\odot z^\star = x^\star$ are non-degenerate second-order KKT points of \eqref{NC_1}. 
    \label{thm:Non-degen_equiv}
\end{theorem}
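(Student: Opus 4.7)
The plan is to reuse the machinery already established in the proof of Theorem~\ref{thm:H1}(1) and upgrade each non-strict inequality to a strict one by invoking the two extra ingredients in Definition~\ref{Def:non-degen_KKT}: strict complementary slackness and strict second-order positivity of $\nabla^2 f(x^\star)$ on the critical cone.

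First I would observe that, since the first-order conditions and the (non-strict) second-order condition of \eqref{NC_1} are already established in the proof of Theorem~\ref{thm:H1}(1) under the weaker hypothesis that $x^\star$ is a second-order KKT point, I only need to show that \eqref{KKT2:NC1} holds with strict inequality whenever $d \neq 0$ and $d \perp z^\star$. My starting point is the decomposition derived in \eqref{eq:A_and_B} (viewed from the $z^\star$-side with $\lambda_N^\star=\lambda^\star$): the quadratic form equals $A+B$, where $A = 4(z^\star\odot d)^\top \nabla^2 f(x^\star)(z^\star\odot d)$ and $B = 2\langle \beta^\star, d\odot d\rangle$. Both are nonnegative under the ordinary (non-strict) hypotheses, and I need to argue that at least one of them is strictly positive whenever $d \neq 0$ and $d \perp z^\star$.

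My plan is a case split on $u := z^\star \odot d$. In Case 1, $u \neq 0$. I already know from the Theorem~\ref{thm:H1}(1) argument that $1_n^\top u = 0$ (from $d \perp z^\star$) and $[u]_i=0$ whenever $[x^\star]_i=0$, so $u$ is admissible in the non-degenerate second-order condition on $x^\star$; strict positivity therefore gives $A > 0$, and combined with $B \geq 0$ this yields $A+B>0$. In Case 2, $u = 0$; this forces $[d]_i = 0$ on the support of $x^\star$ (i.e.\ wherever $[z^\star]_i \neq 0$), so $d$ is supported on $\{i : [x^\star]_i = 0\}$ and $A = 0$. Here I invoke strict complementary slackness: on this index set $[\beta^\star]_i > 0$, so
\begin{equation}
B = 2\sum_{i:\, [x^\star]_i = 0} [\beta^\star]_i [d]_i^2 > 0,
\end{equation}
because $d \neq 0$ and its support lies entirely in this set.

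Combining both cases gives $A+B > 0$ for every admissible nonzero $d$, which is exactly the strict version of \eqref{KKT2:NC1}. I do not expect any serious obstacle: the argument is essentially a careful bookkeeping exercise on top of the existing proof of Theorem~\ref{thm:H1}(1). The only subtlety worth flagging is the Case~2 step, where strict complementary slackness is genuinely needed; without it one could have $[\beta^\star]_i = 0$ at some zero coordinate of $x^\star$ and then choosing $d = e_i$ would defeat the strict inequality, showing that Definition~\ref{Def:non-degen_KKT}'s strengthening \emph{cannot} be dropped.
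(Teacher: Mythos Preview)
Your proposal is correct and follows essentially the same route as the paper's proof: both decompose the quadratic form $d^\top[\nabla_z^2\mathcal{L}_N(z^\star,\lambda_N^\star)]d$ into the two nonnegative pieces $A = 4u^\top\nabla^2 f(x^\star)u$ and $B = 2\sum_i[\beta^\star]_i[d]_i^2$ (with $u = z^\star\odot d$), and then argue that strict complementarity forces $B>0$ precisely in the degenerate situation $u=0$. Your explicit case split on $u=0$ versus $u\neq 0$ is arguably a bit cleaner than the paper's phrasing (which conditions on $u^\top\nabla^2 f(x^\star)u = 0$), but the logic is identical.
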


Before proving this theorem, let us discuss why it cannot be deduced straightforwardly from the proof of Theorem~\ref{thm:H1}. From \eqref{H1:PSD} and \eqref{KKT2:NC1} we deduce that 
\begin{equation}
    d^{\top}\left[\nabla_z^2 \mathcal{L}_N(z^\star,\lambda_N^\star)\right]d \geq 4(z^\star\odot d)^\top \nabla^2 f(z^\star\odot z^\star)(z^\star\odot d) \geq 0
\end{equation}
where $u := z^\star\odot d$ satisfies $1_n^\top u = 0$ and $[u]_i = 0$ if $[x^\star]_i = 0$. However, it may be the case that $u = 0$, even when $d\neq 0$, if $[z^\star]_i = 0$ whenever $[d]_i \neq 0$. Hence, we cannot naively conclude from the positive-definiteness of $\nabla^2f(x^\star)$ that $z^\star$ is non-degenerate. Rather, the strict complementary slackness condition is required, as the following proof illustrates.

\begin{proof}
Suppose $x^\star$ is a non-degenerate second-order KKT point of \eqref{C_1}. From \eqref{KKT2:NC1} and \eqref{eq:With_beta} we deduce
\begin{align}
    d^{\top}\left[\nabla_z^2 \mathcal{L}_N(z^\star,\lambda_N^\star)\right]d & \geq \sum_i [\beta^\star]_i[d]_i^2 + 4 u^\top \nabla^2f(x^\star) u \geq 0
    \label{eq:non-degen_KKT_beta}
\end{align}
where $u := z^\star \odot d$. Now, suppose $u^\top \nabla^2f(x^\star) u = 0$. As $x^\star$ is non-degenerate, this can only happen if $u = 0$, {\em i.e.} if $[z^\star]_i = 0$ whenever $[d]_i \neq 0$. But, by strict complementary slackness, $[\beta^\star]_i > 0$ whenever $[z^\star]_i = 0$, as $[z^\star]_i = 0 \Rightarrow [x^\star]_i = 0$. Hence, in this case we have $\sum_i [\beta^\star]_i[d]_i^2 > 0$. It follows that \eqref{eq:non-degen_KKT_beta} holds with strict inequality for all $d$ with $d\bot z^\star$ and $d \neq 0$. 
\end{proof}

\section{Perturbed Riemannian Gradient Descent}
In this section we study Problem \eqref{NC_1} from the Riemannian perspective. Specifically, we regard $g(z)$ as a function on the manifold $\mathcal{S}_{n-1}$, whence \eqref{NC_1} becomes an unconstrained, Riemannian optimization problem. Recalling notation from Section~\ref{sec:Riem_Opt},

\begin{definition}
  $z^\star$ is a second-order stationary point of $g: \mathcal{S}_{n-1} \to \mathbb{R}$ if 
  \begin{equation}
      \operatorname{grad} g(z^\star) = 0
~\text{ and }~
\lambda_{\min }(\operatorname{Hess} g(z)) \geq 0
  \end{equation}
  where $\lambda_{\min }(H)$ denotes the smallest eigenvalue of the symmetric operator $H$. We say $z^\star$ is a {\em non-degenerate} second-order stationary point of $g: \mathcal{S}_{n-1} \to \mathbb{R}$ if in addition $\lambda_{\min }(\operatorname{Hess} g(z)) > 0$.
  \label{def:SOSP}
\end{definition}

Note that $z^{\star}$ is a second-order stationary point in the Riemannian sense if and only if $z^{\star}$ is a second-order KKT point of Problem \eqref{NC_1} \citep{luenberger1972gradient,boumal2020introduction}. This justifies using these terms interchangeably, although for clarity we will always use the terminology ``stationary point'' when viewing Problem \eqref{NC_1} as a Riemannian optimization problem, and ``KKT point'' when viewing \eqref{NC_1} as a constrained optimization problem. 

\begin{definition}[\cite{criscitiello2019efficiently}]
  A point $z \in \mathcal{S}_{n-1}$ is an $\epsilon$-second-order stationary point of the twice-differentiable function $g: \mathcal{S}_{n-1} \rightarrow \mathbb{R}$ if
$$\|\operatorname{grad} g(z)\| \leq \epsilon
~\text{ and }~
\lambda_{\min }(\operatorname{Hess} g(z)) \geq-\sqrt{\rho \epsilon}
$$
where $\rho$ denotes the Lipschitz constant of the Hessian of the pullback of $g$ from the manifold to tangent space.
\label{def:EpsSecondOrder}
\end{definition}

There is no guarantee RGD applied to a nonconvex function will converge to a second-order stationary point (it may find a saddle point). Fortunately, \citep{criscitiello2019efficiently} shows a {\em Perturbed} version of RGD (PRGD, c.f. Algorithm 1 in \citep{criscitiello2019efficiently}) will, with high probability (w.h.p), find an $\epsilon$-second-order KKT point. We reproduce their result, adapted to the sphere, here.
\begin{theorem}[\cite{criscitiello2019efficiently}]
Let $\{z_k\}_{k=1}^K$ be the sequence of iterates generated by PRGD applied to $g: \mathcal{S}_{n-1} \to \bbR$ for $K$ iterations. If $K = \mathcal{O}\left((\log n)^{4} / \epsilon^{2}\right)$ then $\{z_k\}_{k=1}^K$ contains an $\epsilon$-second-order stationary point of $g(z)$ w.h.p.
\label{thm:Criscitiello}
\end{theorem}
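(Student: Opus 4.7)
The statement is a direct specialization of the main result of \citep{criscitiello2019efficiently} to the case when the ambient Riemannian manifold is $\mathcal{S}_{n-1}$ and the objective is $g(z) = f(z \odot z)$. My plan is therefore to verify that our setup satisfies all of the hypotheses needed to invoke their theorem, and then to track how the dimension enters the final iteration count.

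First, I would check the \emph{geometric} hypotheses on the manifold. The sphere $\mathcal{S}_{n-1}$ has constant sectional curvature equal to $1$ and injectivity radius $\pi$, so it falls comfortably within the bounded-curvature, positive-injectivity-radius regime assumed in \citep{criscitiello2019efficiently}. The exponential map and its inverse are available in closed form, so the perturbation step (drawing a uniformly random vector in a ball of $T_{z_k}\mathcal{S}_{n-1}$ and mapping it back to the sphere via $\exp_{z_k}$) is well-defined and computable, which is all that the PRGD analysis requires of the retraction.

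Second, I would verify the \emph{smoothness} hypotheses on $g$. Lemma~\ref{lemma:Lipschitz} already supplies a Lipschitz constant $\tilde{L} = 4L + 2M$ for $\nabla g$ in the Euclidean sense, and since the Riemannian gradient is just the Euclidean gradient composed with the orthogonal projection $\operatorname{Proj}_z$, this transfers without loss to a Riemannian Lipschitz bound on $\operatorname{grad} g$. The main technical piece is showing that the pullback $\hat{g}_z := g \circ \exp_z$ has Lipschitz Hessian on a ball in $T_z \mathcal{S}_{n-1}$ whose radius is bounded below independently of $z$. Because $f$ is $C^2$ on the compact set $\Delta_n$, so are $\nabla f$ and $\nabla^2 f$, and from \ref{H5} we obtain $C^2$ bounds on $\nabla g$ and $\nabla^2 g$. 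Combining these Euclidean bounds with the (explicit) first and second derivatives of $\exp_z$ on the sphere produces a finite Lipschitz constant $\rho$ for $\nabla^2 \hat{g}_z$, uniform in $z$. This is the step I expect to involve the most bookkeeping, but it is routine given the closed-form geometry of $\mathcal{S}_{n-1}$.

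Finally, with the hypotheses verified, I would invoke the main complexity theorem of \citep{criscitiello2019efficiently}, which guarantees that PRGD, run for $\mathcal{O}\!\left(\mathrm{poly}(\tilde{L}, \rho, \log d) / \epsilon^2\right)$ iterations on a $d$-dimensional manifold, produces an $\epsilon$-second-order stationary point with high probability. Since $\dim(\mathcal{S}_{n-1}) = n - 1$ and the only dimension-dependent factor is the $\log^4 d$ term coming from their escape-from-saddle analysis, this yields the stated $\mathcal{O}(\log^4 n / \epsilon^2)$ iteration bound. The main obstacle, as noted, is the clean derivation of the pullback-Hessian Lipschitz constant $\rho$; all remaining pieces are either standard facts about the sphere or direct consequences of results already established earlier in the paper.
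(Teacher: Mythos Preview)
Your plan is sound, but note that the paper does not actually prove this theorem: it is stated with the attribution \cite{criscitiello2019efficiently} and simply quoted as a known result specialized to $\mathcal{S}_{n-1}$, with no accompanying argument. Your proposal to explicitly verify the geometric and smoothness hypotheses of \cite{criscitiello2019efficiently} and then read off the $\mathcal{O}(\log^4 n/\epsilon^2)$ bound is exactly the right way to justify invoking that result, and is in fact more careful than what the paper itself provides.
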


We call the combination of the Hadamard parametrization and PRGD {\em HadPRGD} (see Algorithm~\ref{alg:HadPRGD}). As a consequence of our landscape analysis:

\begin{theorem}
Let $\{x_k\}_{k=1}^{\infty}$ be the sequence of iterates produced by HadPRGD. Then w.h.p. $\{x_k\}_{k=1}^{\infty}$ contains a subsequence converging to a second-order KKT point of \eqref{C_1}: $x_{k_{\ell}} \to x^\star$.
\label{thm:Had_PRGD_convergence}
\end{theorem}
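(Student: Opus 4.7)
My plan is to combine Theorem~\ref{thm:Criscitiello} with the landscape correspondence Theorem~\ref{thm:H1}, extracting a convergent subsequence via compactness of $\mathcal{S}_{n-1}$ and driving the tolerance to zero. First I would check that $g(z) = f(z\odot z)$ satisfies the regularity hypotheses of Theorem~\ref{thm:Criscitiello}. Lipschitz differentiability is already given by Lemma~\ref{lemma:Lipschitz}. The Lipschitz property of the Hessian of the pullback of $g$ to a tangent space follows from a direct computation using \ref{H5}, together with compactness of $\mathcal{S}_{n-1}$ to uniformly bound $\nabla f,\nabla^2 f,\nabla^3 f$ (assumed continuous as $f$ is smooth) over $\{z\odot z:z\in\mathcal{S}_{n-1}\}=\Delta_n$; call the resulting constant $\rho$.

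Second, I would take a decreasing tolerance sequence $\epsilon_j = 1/j$. By Theorem~\ref{thm:Criscitiello}, after $K_j = \mathcal{O}((\log n)^4/\epsilon_j^2)$ iterations the sequence $\{z_k\}_{k=1}^{K_j}$ contains an $\epsilon_j$-second-order stationary point $z_{k(j)}$ with probability at least $1-\delta_j$ for any chosen $\delta_j>0$. Setting $\delta_j = \delta/2^j$ and taking a union bound, all these events occur simultaneously with probability at least $1-\delta$. Since $\delta$ is arbitrary, this is the sought ``w.h.p.'' statement.

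Third, I would invoke compactness. As $\{z_{k(j)}\}\subset\mathcal{S}_{n-1}$, the Bolzano--Weierstrass theorem yields a subsequence $z_{k(j_\ell)} \to z^\star \in \mathcal{S}_{n-1}$. Using continuity of $\operatorname{grad} g$ and $\operatorname{Hess} g$ combined with
\begin{equation*}
\|\operatorname{grad} g(z_{k(j_\ell)})\|\leq \epsilon_{j_\ell}\to 0, \qquad \lambda_{\min}(\operatorname{Hess} g(z_{k(j_\ell)})) \geq -\sqrt{\rho\,\epsilon_{j_\ell}}\to 0,
\end{equation*}
passing to the limit gives $\operatorname{grad} g(z^\star) = 0$ and $\lambda_{\min}(\operatorname{Hess} g(z^\star))\geq 0$. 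By Definition~\ref{def:SOSP} and the remark following it, $z^\star$ is a second-order KKT point of \eqref{NC_1}.

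Finally, Theorem~\ref{thm:H1}(2) delivers that $x^\star := z^\star\odot z^\star$ is a second-order KKT point of \eqref{C_1}, while continuity of the map $z\mapsto z\odot z$ gives $x_{k(j_\ell)} = z_{k(j_\ell)}\odot z_{k(j_\ell)} \to z^\star\odot z^\star = x^\star$, which is the claimed convergent subsequence. The main obstacle is coordinating the probabilistic guarantees across the infinite sequence of tolerances so that a single high-probability event captures all of them; this is handled by the geometric choice of $\delta_j$ and a union bound, but tacitly relies on Theorem~\ref{thm:Criscitiello} supporting an arbitrary failure parameter. A secondary obstacle is verifying the Hessian-Lipschitz estimate for the pullback of $g$, which is routine but tedious and I would relegate to the appendix.
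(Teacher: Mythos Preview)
Your proposal is correct and follows essentially the same line as the paper: apply Theorem~\ref{thm:Criscitiello} at a sequence of tolerances $\epsilon_j\downarrow 0$, use compactness of $\mathcal{S}_{n-1}$ together with continuity of $\operatorname{grad} g$ and $\operatorname{Hess} g$ to produce a limit that is a genuine second-order stationary point, and then invoke Theorem~\ref{thm:H1} plus continuity of $z\mapsto z\odot z$. The paper organizes the compactness-and-continuity step as a standalone result (Lemma~\ref{lemma: converge to stationary}), which it reuses later; you instead do the Bolzano--Weierstrass extraction inline, which is equally valid and arguably more transparent for this single theorem. Your handling of the ``w.h.p.'' via the geometric choice $\delta_j=\delta/2^j$ and a union bound is more explicit than the paper's treatment, and your remark about verifying the Hessian-Lipschitz hypothesis for the pullback is a detail the paper silently assumes.
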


For the sake of modularity, before proving Theorem~\ref{thm:Had_PRGD_convergence} we first prove a lemma.

\begin{lemma}
Suppose $g$ is twice continuously differentiable. Then, for any $\delta > 0$ there exists an $\epsilon_{\delta} > 0$ such that:
\begin{equation}
    z \text{ is an $\epsilon_{\delta}$-second-order stationary point } \Rightarrow \|z - z^{\star}\| < \delta
\end{equation}
where $z^{\star}$ is a second-order stationary point. 
\label{lemma: converge to stationary}
\end{lemma}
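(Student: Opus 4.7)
The plan is to prove this by a standard compactness-plus-continuity argument, reasoning by contradiction. The key facts are that the sphere $\mathcal{S}_{n-1}$ is compact, and the two conditions defining an $\epsilon$-second-order stationary point (namely $\|\operatorname{grad}g(z)\| \leq \epsilon$ and $\lambda_{\min}(\operatorname{Hess}g(z)) \geq -\sqrt{\rho\epsilon}$) are continuous in $z$ and become increasingly stringent as $\epsilon \to 0$, recovering exactly Definition \ref{def:SOSP} in the limit.

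Concretely, suppose for contradiction that the conclusion fails. Then there exists $\delta > 0$, a sequence $\epsilon_k \downarrow 0$, and $\epsilon_k$-second-order stationary points $z_k \in \mathcal{S}_{n-1}$ such that $\|z_k - z^\star\| \geq \delta$ for every second-order stationary point $z^\star$ (i.e., $z_k$ lies at distance at least $\delta$ from the set $\mathcal{Z}^\star$ of all second-order stationary points of $g$ on the sphere). By compactness of $\mathcal{S}_{n-1}$, after passing to a subsequence we may assume $z_k \to \bar z \in \mathcal{S}_{n-1}$.

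Next I would pass to the limit in the two defining inequalities. Since $g \in C^2$, the Riemannian gradient $\operatorname{grad}g$ and Riemannian Hessian $\operatorname{Hess}g$ are continuous in $z$ (the former is $\operatorname{Proj}_z\nabla g(z)$, the latter the expression given in Section~\ref{sec:Riem_Opt}, both continuous compositions of continuous operations). Moreover $\lambda_{\min}(\cdot)$ is a continuous function of a symmetric operator. Letting $k \to \infty$ in
\begin{equation*}
\|\operatorname{grad}g(z_k)\| \leq \epsilon_k, \qquad \lambda_{\min}(\operatorname{Hess}g(z_k)) \geq -\sqrt{\rho\epsilon_k}
\end{equation*}
yields $\operatorname{grad}g(\bar z) = 0$ and $\lambda_{\min}(\operatorname{Hess}g(\bar z)) \geq 0$, so $\bar z \in \mathcal{Z}^\star$. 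But then $\|z_k - \bar z\| \to 0$ contradicts $\|z_k - z^\star\| \geq \delta$ for all $z^\star \in \mathcal{Z}^\star$, when applied to $z^\star = \bar z$.

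The main potential obstacle is the continuity of $\operatorname{Hess}g$ and of $\lambda_{\min}$, but both are standard: $\operatorname{Hess}g(z)$ depends continuously on $z$ because it is built from $\nabla g$, $\nabla^2 g$, and projections onto $T_z\mathcal{S}_{n-1}$, and the smallest eigenvalue of a symmetric matrix is Lipschitz in the operator norm. One small subtlety is that the Hessian acts on different tangent spaces at different points; this is handled either by viewing $\operatorname{Hess}g(z)$ as a symmetric operator on $\mathbb{R}^n$ (zero on the normal direction) or via a smooth local frame near $\bar z$. With that observation, the argument above goes through cleanly, and $\epsilon_\delta$ may be taken as any sufficiently small member of the sequence $\epsilon_k$.
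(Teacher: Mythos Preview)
Your proposal is correct and follows essentially the same approach as the paper: both argue by contradiction, extract a convergent subsequence by compactness of $\mathcal{S}_{n-1}$, and pass to the limit in the two defining inequalities using continuity of $\operatorname{grad}g$, $\operatorname{Hess}g$, and $\lambda_{\min}$. Your version is in fact slightly more careful in flagging the tangent-space subtlety for the Hessian, which the paper glosses over.
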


\begin{proof}
Fix $\delta > 0$ and suppose to the contrary that no such $\epsilon_{\delta}$ exists. Then, for any $\epsilon_k := 1/k$ we may find an $\epsilon_k$-second-order stationary point $z_k$ such that:
\begin{equation}
\|z_k - z^{\star}\| \geq \delta \text{ for all second-order stationary points $z^\star$.}
\label{eq: Second Order Contradiction}
\end{equation}
As $\mathcal{S}_{n-1}$ is compact, passing to a subsequence if necessary we may assume $z_k$ converges: $\lim_{k\to\infty} z_k = \tilde{z} \in \mathcal{S}_{n-1}$. By continuity, $\tilde{z}$ is a second-order stationary point:
\begin{align*}
    &  \|\operatorname{grad} g(\tilde{z})\| = \|\operatorname{grad} g(\lim_{k\to\infty}z_k)\| = \lim_{k\to\infty}\|\operatorname{grad} g(z_k)\| \leq \lim_{k\to\infty} \epsilon_k = 0 \\
    & \lambda_{\min }(\operatorname{Hess} g(\tilde{z})) = \lambda_{\min }(\operatorname{Hess} g(\lim_{k\to\infty}z_k)) = \lim_{k\to\infty}\lambda_{\min }(\operatorname{Hess} g(z_k)) \geq \lim_{k\to\infty}-\sqrt{\rho\epsilon_k}  = 0
\end{align*}
As $\tilde{z} = \lim_{k\to\infty} z_k$ this contradicts \eqref{eq: Second Order Contradiction}.
\end{proof}

\begin{proof}[Proof of Theorem~\ref{thm:Had_PRGD_convergence}]
Let $\{z_k\}_{k=1}^{\infty}$ denote the auxiliary sequence generated by HadPRGD. Defining $\epsilon_{\ell} := 1/\ell$ by Theorem~\ref{thm:Criscitiello} we know there exists $T_{\ell}$ such that $\{z_k\}_{k=1}^{T_{\ell}}$ contains an $\epsilon_{\ell}$-second-order stationary point w.h.p, call it $z_{k_{\ell}}$. As $\epsilon_{\ell}\to 0$, by Lemma~\ref{lemma: converge to stationary} $z_{k_{\ell}} \to z^\star$, a second-order stationary point of Problem \eqref{NC_1}. By \cite{luenberger1972gradient} $z^\star$ is equivalently a second-order KKT point of \eqref{NC_1}. The sequence $\{x_{k_{\ell}}:= z_{k_{\ell}}\odot z_{k_{\ell}}\}_{\ell=1}^{\infty}$ converges to $x^\star = z^\star\odot z^\star$, which by Theorem~\ref{thm:H1} is a second-order KKT point of \eqref{C_1}.
\end{proof}

Providing generic, quantitative convergence guarantees is non-trivial, as one has to relate $\epsilon$-second-order stationary points of $g$ to $\epsilon$-second-order KKT points of \eqref{C_1}, appropriately defined (see, for example \citep{dutta2013approximate}). We leave this for future work and instead provide a convergence rate for the following special case.

\begin{theorem}
Suppose problem \eqref{C_1} has the strict saddle property, has all local minimizers also global minimizers, and has that all second-order KKT points are non-degenerate. Then with high probability HadPRGD finds $x_k$ satisfying
\begin{equation}
    f(x_k) - \min_{x\in\Delta_n}f(x) \leq \epsilon
\end{equation}
within $K = \mathcal{O}((\log n)^4/\epsilon)$ iterations, for $\epsilon$ small enough. 
\end{theorem}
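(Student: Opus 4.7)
The plan is to combine three ingredients: the structural consequences of the assumptions, which force the second-order KKT points of \eqref{C_1} to be isolated global minimizers; a local Polyak--\L ojasiewicz (PL) style inequality for $g$ near each lifted minimizer, derived from non-degeneracy of the Riemannian Hessian via Theorem~\ref{thm:Non-degen_equiv}; and the iteration complexity of PRGD from Theorem~\ref{thm:Criscitiello}, applied at a tolerance $\epsilon'=\Theta(\sqrt{\epsilon})$ so that a gradient-norm bound of order $\sqrt{\epsilon}$ yields the desired $\epsilon$ suboptimality.

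First I would translate the hypotheses to the sphere. The strict saddle property means that every second-order KKT point of \eqref{C_1} is a local minimum; combined with the assumption that all local minima are global, every second-order KKT point attains the common value $f^\star:=\min_{x\in\Delta_n}f(x)$. Non-degeneracy together with \cite[Prop.~4.3.2]{bertsekas1997nonlinear} then implies each such point is strict, hence isolated, and by compactness of $\Delta_n$ there are only finitely many: $X^\star=\{x^\star_1,\dots,x^\star_N\}$. Applying Theorem~\ref{thm:H1} and Theorem~\ref{thm:Non-degen_equiv}, the lifted set $Z^\star:=\{z\in\mathcal{S}_{n-1}:z\odot z\in X^\star\}$ is a finite collection of non-degenerate second-order stationary points of $g$ on the sphere, each satisfying $g(z^\star)=f^\star$.

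Next I would establish a local PL inequality. For each $z^\star\in Z^\star$ the Riemannian Hessian of $g$ is positive definite on $T_{z^\star}\mathcal{S}_{n-1}$; taking a minimum over the finite set $Z^\star$, there exists $\mu>0$ with $\lambda_{\min}(\operatorname{Hess} g(z^\star))\geq \mu$ for every $z^\star\in Z^\star$. A second-order Taylor expansion of $g$ along the geodesic from $z^\star$ to $z$, combined with Lipschitz continuity of $\operatorname{Hess} g$ (which follows from Lemma~\ref{lemma:Lipschitz} and smoothness of $f$), yields constants $\delta>0$ and $C>0$, uniform over $Z^\star$, such that whenever $z\in\mathcal{S}_{n-1}$ lies within geodesic distance $\delta$ of some $z^\star\in Z^\star$,
\begin{equation}
g(z)-f^\star \;\leq\; C\,\|\operatorname{grad} g(z)\|^2.
\label{eq:Proposal_PL}
\end{equation}
By Lemma~\ref{lemma: converge to stationary} there exists $\epsilon_\delta>0$ such that every $\epsilon_\delta$-second-order stationary point of $g$ lies within geodesic distance $\delta$ of some element of $Z^\star$.

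Finally I would calibrate the tolerance. Given a target $\epsilon$, taken small enough that $\sqrt{\epsilon/C}\leq \epsilon_\delta$, set $\epsilon':=\sqrt{\epsilon/C}$. By Theorem~\ref{thm:Criscitiello}, HadPRGD produces an $\epsilon'$-second-order stationary point $z_k$ of $g$ within $K=\mathcal{O}((\log n)^4/\epsilon'^2)=\mathcal{O}((\log n)^4/\epsilon)$ iterations with high probability; inequality \eqref{eq:Proposal_PL} together with $\|\operatorname{grad} g(z_k)\|\leq \epsilon'$ then gives
\[
f(x_k)-\min_{x\in\Delta_n}f(x)\;=\;g(z_k)-f^\star\;\leq\;C\epsilon'^2\;=\;\epsilon.
\]
The main obstacle will be verifying the \emph{uniform} PL inequality \eqref{eq:Proposal_PL}: the Hessian lower bound $\mu$ and a Lipschitz constant for $\operatorname{Hess} g$ are available pointwise, but one must ensure a single radius $\delta$ and a single constant $C$ work across all of $Z^\star$, which requires $\delta$ to be smaller than the pairwise geodesic separation of the finitely many points in $Z^\star$ and small enough that the cubic remainder in the Taylor expansion is dominated by the quadratic term.
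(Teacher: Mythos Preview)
Your proposal is correct and follows essentially the same route as the paper: non-degeneracy of the lifted minimizers (via Theorem~\ref{thm:Non-degen_equiv}) gives a positive-definite Riemannian Hessian and hence a local PL inequality, Lemma~\ref{lemma: converge to stationary} ensures an $\epsilon_\delta$-second-order stationary point lies in the PL region, and calibrating the PRGD tolerance to $\epsilon'=\Theta(\sqrt{\epsilon})$ converts the $\mathcal{O}((\log n)^4/\epsilon'^2)$ complexity of Theorem~\ref{thm:Criscitiello} into $\mathcal{O}((\log n)^4/\epsilon)$. Your explicit treatment of finiteness of $Z^\star$ and uniformity of $\mu,\delta,C$ across all lifted minimizers is in fact slightly cleaner than the paper's version, which first invokes Theorem~\ref{thm:Had_PRGD_convergence} to fix a single limit point $z^\star$ and then works locally around it.
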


\begin{proof}
Fix $z_0$. As in Theorem~\ref{thm:Had_PRGD_convergence} we know there exists a subsequence $z_{k_{\ell}} \to z^\star$, a second-order stationary point of $g$, and $x_{k_{\ell}} \to x^\star := z^\star\odot z^\star$, a second-order KKT point of \eqref{C_1}. As \eqref{C_1} is strict-saddle, $x^\star$ is a local minimizer, and hence a non-degenerate global minimizer (by assumption). From Theorem~\ref{thm:Non-degen_equiv} $z^\star$ is a non-degenerate second-order KKT point of \eqref{NC_1}, and so again by \cite{luenberger1972gradient} (see also \cite{boumal2020introduction}) $z^\star$ is a non-degenerate second-order stationary point of $g: \mathcal{S}_{n}\to\mathbb{R}$. That is, $\operatorname{Hess} g(z^{\star})$ is positive definite. By continuity there exists a geodesic ball $B(z^{\star}, \delta)$ upon which $\operatorname{Hess} g(z)$ is positive definite, {\em i.e.} $g$ restricted to $B(z^{\star}, \delta)$ is geodesically $\tau$-strongly convex for some $\tau > 0$ and so satisfies the Polyak-\L ojasiewicz (PL) inequality \cite[Lemma 11.28]{boumal2020introduction}:
\begin{equation}
  g(z) - g(z^{\star}) \leq \frac{1}{2\tau}\|\operatorname{grad} g(z)\|^2  
\end{equation}

From Lemma~\ref{lemma: converge to stationary} there exists $\epsilon_{\delta} > 0$ such that if $z$ is an $\epsilon_{\delta}$-second-order KKT point then $z \in B(z^{\star}, \delta)$. So, suppose $\epsilon >0$ is small enough that $\sqrt{2\tau\epsilon} < \epsilon_\delta$. By Theorem~\ref{thm:Criscitiello} HadPRGD finds a $\sqrt{2\tau\epsilon}$-second-order KKT point, call it $z_{\sqrt{2\tau\epsilon}}$, within $K = \mathcal{O}\left((\log n)^4/(\sqrt{2\tau\epsilon})^2\right) = \mathcal{O}\left((\log n)^4/\epsilon\right)$ iterations (w.h.p). As $\sqrt{2\tau\epsilon} < \epsilon_\delta$ we know $z_{\sqrt{2\tau\epsilon}}$ is also an $\epsilon_{\delta}$-second-order KKT point hence $z_{\sqrt{2\tau\epsilon}} \in B(z^{\star}, \delta)$. Appealing to the PL inequality and letting $x_{\sqrt{2\tau\epsilon}} = z_{\sqrt{2\tau\epsilon}}\odot z_{\sqrt{2\tau\epsilon}}$:

\begin{equation}
    f(x_{\sqrt{2\tau\epsilon}}) - \min_{x\in\Delta_n}f(x) = f(x_{\sqrt{2\tau\epsilon}}) - f(x^{\star}) = g(z_{\sqrt{2\tau\epsilon}}) - g(z^{\star}) \leq \frac{1}{2\tau} \|\operatorname{grad} g(z_{\sqrt{2\tau\epsilon}})\|^2 \leq \frac{1}{2\tau} \left(\sqrt{2\tau\epsilon}\right)^2 \leq \epsilon
\end{equation}

\end{proof}

\section{Selecting the Step Size}
Experimentally that HadPRGD tolerates much larger step-sizes than the Lipschitz constant bound in Lemma~\ref{lemma:Lipschitz} suggests, leading to faster convergence in practice. To substantiate this we consider the simplex-constrained under-determined least squares problem
\begin{equation}
    \operatorname*{minimize}_{x\in\Delta_n} \left\{f(x) = \|Ax - b\|_2^2\right\}
    \label{eq:Underdetermined_Least_Squares}
\end{equation}
where $A\in \bbR^{m\times n}$ with $m = 0.1n$ and $b = Ax_{\mathrm{true}}$ for randomly selected $x_{\mathrm{true}} \in \mathrm{int}(\Delta_n)$. We record, as a function of $n$, the number of iterations and wall-clock time required by HadPRGD and PGD to reach  to find an $\varepsilon$-optimal solution with $\varepsilon=10^{-16}$ (see Figure~\ref{fig:LeastSquares_Interior}). The step sizes of HadPRGD and PGD are hand-tuned to be as large as possible while still converging.

\begin{algorithm}[tb]
\caption{HadRGD for \eqref{C_1}}
\label{alg:HadRGD}
\textbf{Input}: $x_0 \in \Delta_n$: initial point, $\alpha$: step size, $K$: number of iterations, $f$: original objective function
\begin{algorithmic}[1] 
\STATE $z_0 = \sqrt{x_0}$ \algorithmiccomment{(Defined componentwise)}
\STATE $g(z) := f(z\odot z)$
\FOR{k=1,\ldots, K}
\STATE $z_{k+1} = \operatorname{exp}_{x_k}(-\alpha\operatorname{grad}g(z_k))$
\ENDFOR
\end{algorithmic}
\textbf{Return} $x_{K} = z_{K}\odot z_{K}$
\end{algorithm}

\begin{figure*}[!htp]
    \centering
    \includegraphics[width=0.47\linewidth]{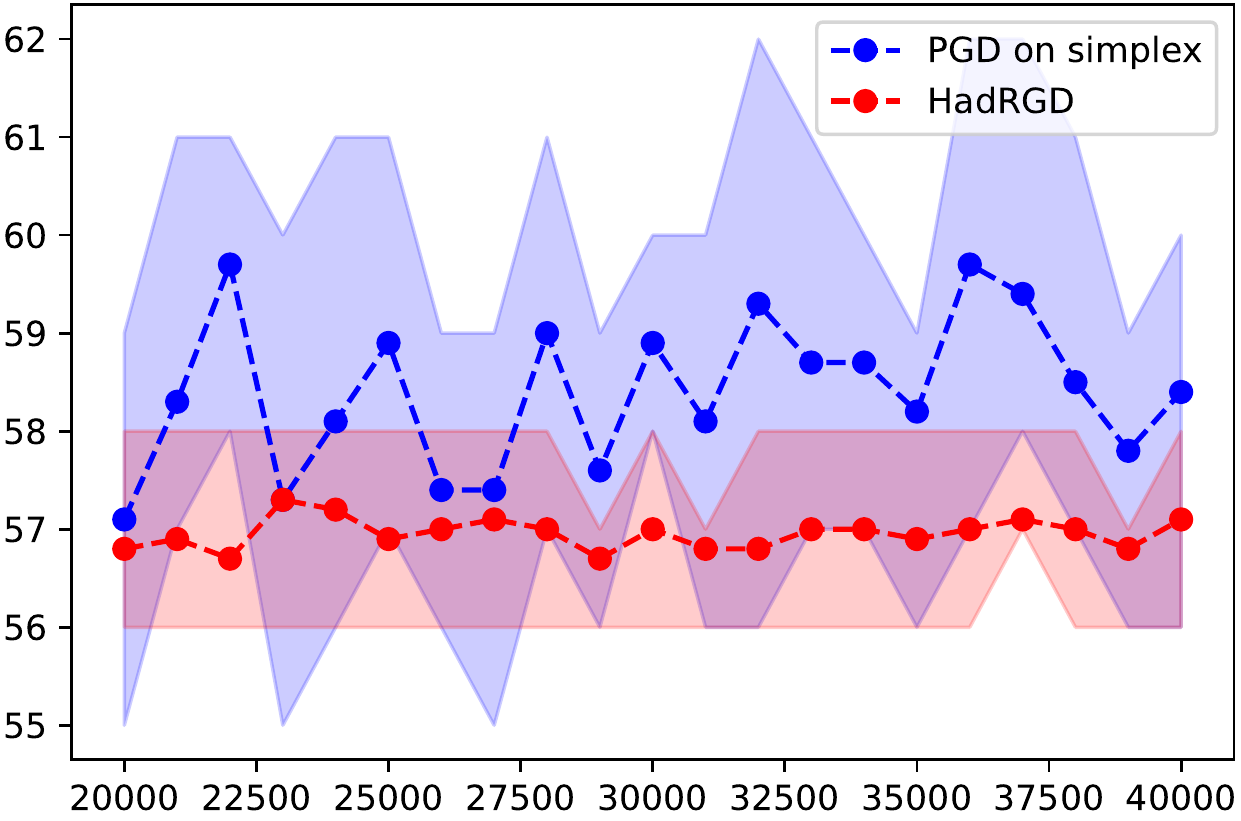}~~~~~
    \includegraphics[width=0.46\linewidth]{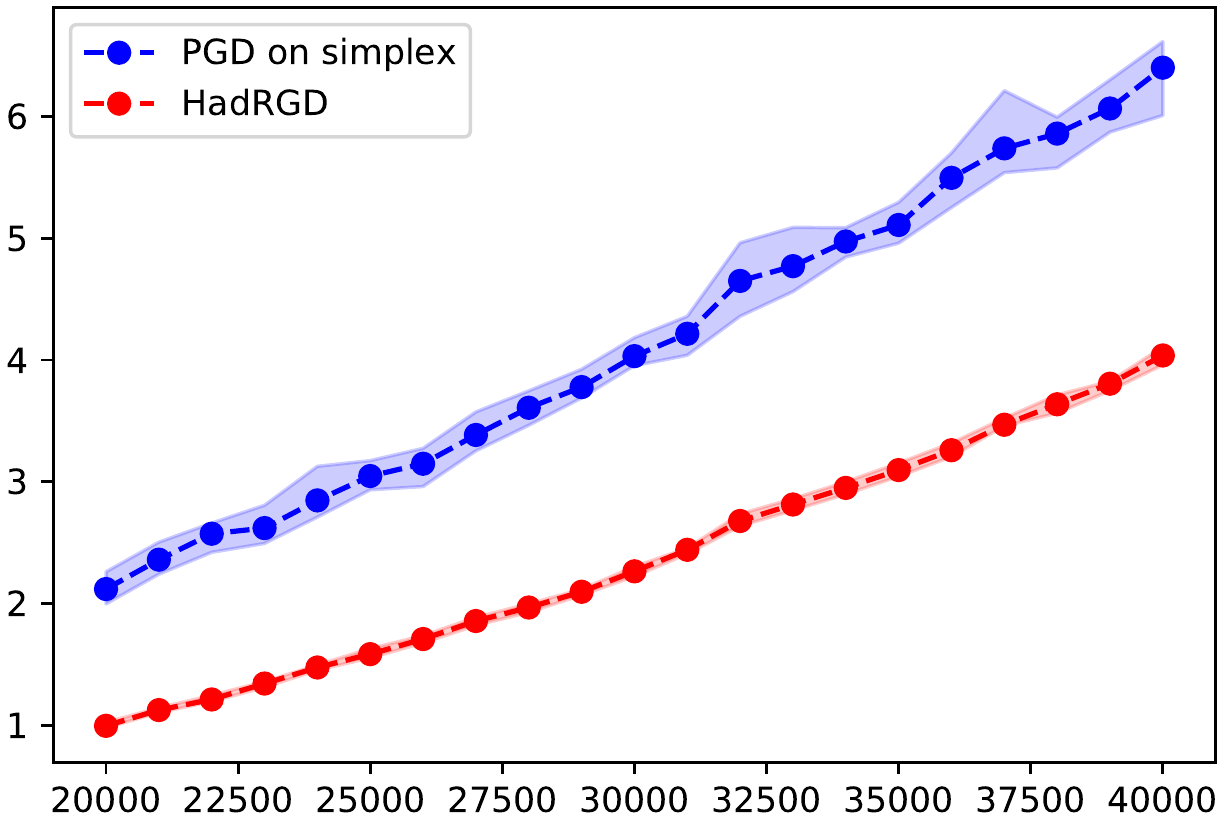}
    \caption{{\bf Left:} Number of iterations vs. $n$. {\bf Right:} Wall-clock time vs. $n$. When the step-size is properly tuned, PRGD requires fewer iterations than PGD on the simplex. Moreover, the per-iteration cost of PRGD is lower, leading to a lower overall run-time.}
    \label{fig:LeastSquares_Interior}
\end{figure*}

Clearly, HadRGD is effective when using an appropriately large step-size, but it is unclear from theoretical grounds how large this step-size can be. Thus, we implement HadRGD in conjunction with a line search algorithm. We consider two approaches.

\paragraph{Armijo-Wolfe} Suppose $v \in T_{z}\mathcal{S}_n$ is a {\em descent direction}\footnote{{\em i.e.} $\langle \grad g(z), v\rangle < 0$} for $g$. We say $\alpha^\star$ satisfies the (Riemannian) Armijo-Wolfe conditions along the geodesic traced out by $\exp_{z}(\alpha v)$ if
\begin{subequations}
\begin{align}
& g\left(\exp_{z}(\alpha^\star v)\right) \leq g\left(z\right) + \rho_1\alpha^\star\langle \grad g(z), v\rangle \label{eq:Armijo}\\
& g^{\prime}\left(\exp_{z}(\alpha^\star v)\right) \geq \rho_2\langle \grad g(z), v\rangle \label{eq:CurvCond}
\end{align}
\end{subequations}
where $g^{\prime}\left(\exp_{z}(\alpha v)\right)$ is shorthand for the derivative of $\alpha \mapsto g\left(\exp_{z}(\alpha v)\right)$. Informally, \eqref{eq:Armijo} guarantees sufficient descent while \eqref{eq:CurvCond} guarantees approximate stationarity (recalling $\langle \grad g(z), v\rangle < 0$). We implement HadRGD, with $\alpha_k$ chosen to satisfy the Armijo-Wolfe conditions along $\exp_{z_k}(\alpha\grad g(z_k))$ via backtracking line search, as HadRGD-AW (Alg.~\ref{alg:HadRGD-AW} in Appendix~\ref{app:Algos}).

\paragraph{Barzilai-Borwein (BB)} The BB step-size\footnote{There is another, closely related BB step-size rule. See \citep{wen2013feasible} for discussion}  rule is
\begin{equation}
    \alpha^{\mathrm{BB}}_k = {\|s_{k-1}\|_2^2}/{\langle s_{k-1},y_{k-1}\rangle} 
\end{equation}
where $s_{k-1} = z_{k} - z_{k-1}$ and $y_{k-1} = \grad g(z_k) - \grad g(z_{k-1})$. HadRGD with the BB step-size need not be monotone, thus we follow \citep{wen2013feasible,zhang2004nonmonotone} and determine $\alpha_k$ via a non-monotone line search starting at $\alpha^{\mathrm{BB}}_k$. That is, we select $z_{k+1} = \exp_{z_k}(\alpha_k\grad g(z_k))$ where $\alpha_k = \delta^h\alpha^{\mathrm{BB}}_k$ where $h$ is the smallest integer satisfying 
\begin{equation*}
    g(\exp_{z_k}(\alpha_k\grad g(z_k))) \leq C_k - \rho_1\alpha_k\|\grad g(z_k)\|^2
\end{equation*}
where $C_k$ is a running average:
\begin{align*}
    C_{k+1} &= \frac{\eta Q_kC_k + g(z_{k+1})}{Q_{k+1}}~\text{ and }~
    Q_{k+1} = \eta Q_k + 1
\end{align*}
We implement this as HadRGD-BB (Alg.~\ref{alg:HadRGD-BB} in Appendix~\ref{app:Algos}). \\

All parameters ($\rho_1,\rho_2,\delta, \eta, \ldots$) are set to the values suggested in \citep{wen2013feasible}. See Appendix~\ref{app:Algos} for further implementation details. 

\subsection{Local Linear Convergence}
We show HadRGD-AW enjoys a locally linear convergence rate, in the case where $x^\star$ is non-degenerate.


\begin{theorem}
Suppose all second-order KKT points of \eqref{C_1} are non-degenerate. Let $\{z^k\}$ be a sequence of points converging to $z^{\star}$ constructed by HadRGD-AW. Then there exists a constants $C,\theta >0$ and an integer $K_0 \geq 0$ such that 
\begin{equation}
    \|z^k - z^\star\|_2 \leq C\theta^k \text{ for } k \geq K_0.
\end{equation}
\label{thm:AW_Linear_Convergence}
\end{theorem}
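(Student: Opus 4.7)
The plan is to leverage the non-degeneracy hypothesis to establish geodesic strong convexity of $g$ near $z^\star$, then carry out a standard Wolfe-line-search linear-rate argument adapted to the sphere.

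First, HadRGD-AW enforces monotone decrease of $g$, and the Wolfe conditions together with Lemma~\ref{lemma:Lipschitz} imply $\|\grad g(z^k)\|\to 0$, so the limit $z^\star$ is a first-order KKT point of \eqref{NC_1}; together with the standard fact that descent methods satisfying the curvature condition do not stably converge to strict saddles, this forces $z^\star$ to be second-order. By the non-degeneracy hypothesis, $x^\star := z^\star\odot z^\star$ is a non-degenerate second-order KKT point of \eqref{C_1}, so Theorem~\ref{thm:Non-degen_equiv} gives that $z^\star$ is a non-degenerate second-order KKT point of \eqref{NC_1}. Equivalently, $\operatorname{Hess} g(z^\star) \succ 0$ on $T_{z^\star}\mathcal{S}_{n-1}$, with smallest eigenvalue some $\tau > 0$. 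Continuity of $\operatorname{Hess} g$ and compactness of $\mathcal{S}_{n-1}$ furnish a geodesic ball $B(z^\star,\delta)$ on which $g$ is geodesically $\tau/2$-strongly convex; in particular, it obeys the Riemannian Polyak--Lojasiewicz inequality
\begin{equation*}
g(z) - g(z^\star) \leq \tfrac{1}{\tau}\|\grad g(z)\|^2 \quad \text{for all } z\in B(z^\star,\delta).
\end{equation*}

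Since $z^k \to z^\star$, there is an index $K_0$ past which $z^k \in B(z^\star,\delta/2)$. For $k \geq K_0$, the Armijo condition \eqref{eq:Armijo} with descent direction $v_k = -\grad g(z^k)$ yields
\begin{equation*}
g(z^{k+1}) \leq g(z^k) - \rho_1 \alpha_k \|\grad g(z^k)\|^2.
\end{equation*}
The curvature condition \eqref{eq:CurvCond}, combined with the Lipschitz constant $\tilde L$ of $\grad g$ from Lemma~\ref{lemma:Lipschitz} (transferred to geodesics via compactness of the sphere), gives a uniform lower bound $\alpha_k \geq \alpha_{\min} := (1-\rho_2)/\tilde L > 0$, analogous to the classical Euclidean derivation. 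Substituting the PL bound into the Armijo decrease produces the geometric contraction
\begin{equation*}
g(z^{k+1}) - g(z^\star) \leq \bigl(1 - \rho_1 \alpha_{\min}\tau\bigr)\bigl(g(z^k) - g(z^\star)\bigr), \quad k \geq K_0.
\end{equation*}
Finally, $\tau/2$-strong convexity yields $\tfrac{\tau}{4}\,\mathrm{dist}(z^k,z^\star)^2 \leq g(z^k) - g(z^\star)$, where $\mathrm{dist}$ denotes geodesic distance; since $\|z^k - z^\star\|_2 \leq \mathrm{dist}(z^k,z^\star)$ on the unit sphere, setting $\theta := \sqrt{1 - \rho_1\alpha_{\min}\tau} \in (0,1)$ and absorbing the $K_0$ shift into the constant yields the desired bound $\|z^k - z^\star\|_2 \leq C\theta^k$.

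The main technical obstacle is making the Wolfe-based step-size lower bound rigorous in the Riemannian setting: one must bound the variation of $\grad g$ along geodesics rather than Euclidean segments, which requires translating the Euclidean Lipschitz constant of Lemma~\ref{lemma:Lipschitz} into an intrinsic bound on $\operatorname{Hess} g$. Because $\mathcal{S}_{n-1}$ has sectional curvature $1$ (bounded), the pullback analysis of \citep{boumal2020introduction} converts the Euclidean bound to the required manifold Lipschitz constant up to a modest constant. The only other subtlety is excluding convergence to a strict saddle, which is standard and can alternatively be posited as an implicit assumption on the limit $z^\star$.
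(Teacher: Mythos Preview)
Your approach matches the paper's at its core: both invoke Theorem~\ref{thm:Non-degen_equiv} to transfer the non-degeneracy hypothesis from $x^\star$ to $z^\star$, yielding $\operatorname{Hess} g(z^\star)\succ 0$. The difference is that the paper then dispatches the linear rate by a one-line citation to \citep[Theorem~4.1]{yang2007globally}, a black-box local linear convergence result for Riemannian steepest descent with Armijo--Wolfe line search near a non-degenerate critical point, whereas you reconstruct that result by hand via the PL inequality and a Wolfe-based step-size lower bound. Your unpacking is correct in outline and makes the argument self-contained; the paper's version is shorter but simply defers to an external reference the Riemannian line-search analysis you correctly flag as the main technical obstacle. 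Both treatments are equally silent on why the limit $z^\star$ must be second-order rather than a strict saddle: the paper opens its proof by asserting, without justification, that $x^\star$ is a second-order KKT point of \eqref{C_1}, so your explicit hedge on this point is no weaker than the original.
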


\begin{proof}
Let $x^\star$ be any second-order KKT point of \eqref{C_1} which is non-degenerate by assumption. By Theorem~\ref{thm:Non-degen_equiv} $z^\star$ is a non-degenerate second-order KKT point of Problem~\eqref{NC_1}, and so is a non-degenerate second-order stationary point of $g$ in the sense of Definition~\ref{def:SOSP}. The claimed convergence rate then follows from \citep[Theorem 4.1]{yang2007globally}.
\end{proof}

\begin{remark}
We observe that HadRGD-AW converges linearly when applied to the underdetermined least-squares problem \eqref{eq:Underdetermined_Least_Squares}, which does not have non-degenerate second-order KKT points, see Appendix~\ref{sec:AdditionalBenchmarking}. Most likely a weakened notion of non-degeneracy such as the restricted strong convexity property \citep{zhang2017restricted} suffices to ensure linear convergence. We leave the exploration of this direction to future work.  
\end{remark}

\section{Extensions}
\label{sec:extensions}

We now extend our framework from $\Delta_n$ to several related geometries, deferring all proofs to Appendix \ref{sec:landscape:proof}. 

\subsection{The unit simplex} Let $\text{\ding{115}}_n$ denote the unit simplex
\begin{equation}
\text{\ding{115}}_n := \left\{x \in \mathbb{R}^n: \ \sum_i [x]_i \leq 1 \text{ and } [x]_i \geq 0 ,\ \forall i \right\} 
\end{equation}
Note that the probability simplex $\Delta_n$ is subset of the boundary of the unit simplex $\text{\ding{115}}_n$. We now consider
\begin{equation}
    \operatorname*{minimize}_{x \in \text{\ding{115}}_n} f(x)
    \tag{$\mathrm{C_2}$}
    \label{C_2}
\end{equation}
Using $x = z\odot z$ we transform \eqref{C_2} to a problem on the unit ball $\mathcal{B}_n^2:=\{x\in\R^n:  \|x\|_2\le1\}$:
\begin{equation}
    \operatorname*{minimize}_{z \in \mathcal{B}_n^2}  f(z\odot z)
    \tag{$\mathrm{NC_2}$}
    \label{NC_2}
\end{equation}
Similar to the probability simplex $\Delta_n$ case,  \eqref{NC_2} has a benign landscape.
\begin{theorem}
\label{thm:H2}
\begin{enumerate}
    \item Suppose that $x^\star$ is a second-order KKT point of \eqref{C_2}. Then, all $z^\star$ satisfying $z^\star\odot z^\star = x^\star$ are second-order KKT points of \eqref{NC_2}.
    
    \item Conversely, suppose $z^\star$ is a second-order KKT point of \eqref{NC_2}. Then, $x^\star = z^\star\odot z^\star$ is a second-order KKT point of \eqref{C_2}.
\end{enumerate}
\end{theorem}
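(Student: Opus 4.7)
The plan is to mirror the structure of the proof of Theorem~\ref{thm:H1}, but with an added case analysis based on whether the inequality constraint $1_n^\top x \leq 1$ (equivalently $\|z\|_2 \leq 1$) is active at $x^\star$ (resp.\ $z^\star$). Writing down the KKT conditions for \eqref{C_2} and \eqref{NC_2} shows they differ from \eqref{KKT:C1} and \eqref{KKT:NC1} only in that (i) the equality \eqref{KKT:C1:5} is replaced by $1_n^\top x^\star \leq 1$ coupled with the sign condition $\lambda^\star \leq 0$ and complementary slackness $\lambda^\star(1_n^\top x^\star - 1)=0$, and (ii) the analogous relaxation of \eqref{KKT:NC1:2} for \eqref{NC_2}, with $\lambda_N^\star \leq 0$ and $\lambda_N^\star(\|z^\star\|_2^2 - 1)=0$. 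The multiplier correspondence $\lambda_N^\star = \lambda^\star$ will remain the right choice.

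For Part 1 (second-order KKT of \eqref{C_2} implies second-order KKT of \eqref{NC_2}) I split into two cases. Boundary case ($1_n^\top x^\star = 1$): the proof of Theorem~\ref{thm:H1} transfers essentially verbatim, the only addition being that $\lambda^\star \leq 0$ forces $\lambda_N^\star \leq 0$, so the inequality-KKT multiplier has the correct sign. Interior case ($1_n^\top x^\star < 1$): complementary slackness forces $\lambda^\star = \lambda_N^\star = 0$, the first-order condition reduces to $\nabla f(x^\star)=\beta^\star\geq 0$ with $\beta^\star\odot x^\star=0$, and multiplying pointwise by $z^\star$ together with \ref{H2} immediately yields \eqref{KKT:NC1:1}. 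For the second-order condition, the critical cone at an interior point of $\mathcal{B}_n^2$ is all of $\mathbb{R}^n$, and the associated bilinear form, expanded via \ref{H5}, is $2\langle\nabla f(x^\star), d\odot d\rangle + 4(z^\star\odot d)^\top\nabla^2 f(x^\star)(z^\star\odot d)$; the first term is nonnegative because $\nabla f(x^\star)=\beta^\star\geq 0$, and the second is nonnegative by the \eqref{C_2} second-order condition applied to $u := z^\star\odot d$, which automatically has $[u]_i = 0$ whenever $[x^\star]_i = 0$ and whose test-direction constraint $1_n^\top u = 0$ is not imposed by \eqref{C_2} in the interior case.

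Part 2 (second-order KKT of \eqref{NC_2} implies second-order KKT of \eqref{C_2}) follows a symmetric split. The boundary case ($\|z^\star\|_2=1$) reuses Theorem~\ref{thm:H1}'s construction of $\beta^\star := \nabla f(x^\star) - \lambda_N^\star 1_n$, with the sign condition $\lambda^\star \leq 0$ now inherited from $\lambda_N^\star \leq 0$. In the interior case ($\|z^\star\|_2 < 1$) we have $\lambda_N^\star = 0$ and $\nabla^2 g(z^\star) \succeq 0$ on all of $\mathbb{R}^n$; testing with $d = e_k$ for indices where $[z^\star]_k = 0$ recovers $[\nabla f(x^\star)]_k \geq 0$, so we set $\beta^\star := \nabla f(x^\star)$ and $\lambda^\star := 0$, and the remaining first-order conditions fall out. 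The second-order condition for \eqref{C_2} is then verified by the same substitution $[d]_i = [u]_i/[z^\star]_i$ (and $0$ otherwise) used in Theorem~\ref{thm:H1}, noting that $1_n^\top u = 0$ is again not required in this case. The main obstacle is purely bookkeeping --- carefully tracking which critical cone applies in each sub-case and ensuring the sign of the active-inequality multiplier propagates consistently between formulations; no individual computation is harder than those in Theorem~\ref{thm:H1}.
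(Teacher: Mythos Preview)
Your proposal is correct and uses essentially the same ideas as the paper's proof: the multiplier identification $\lambda_N^\star = \lambda^\star$, the inequality $\langle\nabla f(x^\star),d\odot d\rangle - \lambda^\star\|d\|^2 \geq 0$ from $\beta^\star\geq 0$, the test direction $d=e_{k^\star}$ to recover $[\beta^\star]_{k^\star}\geq 0$, and the $u\leftrightarrow d$ substitution for the second-order condition. The only difference is organizational---you make the boundary/interior split the top-level case analysis and delegate the boundary case to Theorem~\ref{thm:H1}, whereas the paper treats both cases in a single pass and embeds the $\|z^\star\|=1$ vs.\ $\|z^\star\|<1$ distinction only where the critical cone changes; either presentation works.
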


\subsection{The weighted probability simplex} For any $a\in\mathbb{R}^n$ with $a>0$ (entrywise), consider the weighted probability simplex:
\begin{equation}
    \Delta_{n}^a := \left\{z \in \mathbb{R}^{n}:  \sum_{i=1}^{n} [a]_i [z]_i = 1  \text{ and } [z]_i\ge 0,\ \forall i\right\}
\end{equation}
Let us consider the following  optimization problem:
\begin{equation}
   \operatorname*{minimize}_{x \in \Delta_{n}^a} f(x)
    \tag{$\mathrm{C_3}$}
    \label{C_3}
\end{equation}
Using once more $x = z\odot z$, we obtain:
\begin{equation}
   \operatorname*{minimize}_{z \in \mathcal{B}_n^a}  f(z\odot z)
    \tag{$\mathrm{NC_3}$}
    \label{NC_3}
\end{equation}
Here, the unit $a$-weighted $\ell_2$ norm ball is defined as
\[
\mathcal{B}_n^a:=\left\{x\in\mathbb{R}^n: \|x\|^2_{\mathrm{diag}(a)}=1\right\}
\]
where  
\[\|x\|^2_{\mathrm{diag}(a)}:=\sum_i [a]_i [x]_i^2\]
We claim \eqref{NC_3} enjoys a benign  landscape.
\begin{theorem}
\begin{enumerate}
    \item Suppose that $x^\star$ is a second-order KKT point of \eqref{C_3}. Then, all $z^\star$ satisfying $z^\star\odot z^\star = x^\star$ are second-order KKT points of \eqref{NC_3}.
    
    \item Conversely, suppose $z^\star$ is a second-order KKT point of \eqref{NC_3}. Then, $x^\star = z^\star\odot z^\star$ is a second-order KKT point of \eqref{C_3}.
\end{enumerate}
\label{thm:H3}
\end{theorem}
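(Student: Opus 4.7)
The plan is to mimic the proof of Theorem~\ref{thm:H1} almost verbatim, replacing the normal vector $1_n$ by the weight vector $a$ and the identity $I_n$ by $\mathrm{diag}(a)$. First I would write down the two sets of KKT conditions explicitly. For \eqref{C_3}: first-order reads $\nabla f(x^\star) = \lambda^\star a + \beta^\star$, together with $x^\star\geq 0$, $\beta^\star\geq 0$, $x^\star\odot\beta^\star=0$, $a^\top x^\star = 1$; and second-order says $u^\top\nabla^2 f(x^\star)u\geq 0$ for every $u$ with $a^\top u = 0$ and $[u]_i=0$ whenever $[x^\star]_i=0$. For \eqref{NC_3}, using \ref{H5}, the first-order condition is $\nabla f(x^\star)\odot z^\star = \lambda_N^\star (a\odot z^\star)$ together with $\|z^\star\|^2_{\mathrm{diag}(a)}=1$; the Hessian of the Lagrangian becomes $2\mathrm{diag}(\nabla f(x^\star)) + 4\mathrm{diag}(z^\star)\nabla^2 f(x^\star)\mathrm{diag}(z^\star) - 2\lambda_N^\star\mathrm{diag}(a)$, and the tangent-space constraint at $z^\star$ is $\langle a\odot z^\star,\, d\rangle = 0$.

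For direction (1), I would multiply the first-order condition of \eqref{C_3} entry-wise by $z^\star$ and use \ref{H1}, then use complementary slackness with \ref{H2} to get $\beta^\star\odot z^\star=0$, yielding the first-order condition of \eqref{NC_3} with $\lambda_N^\star=\lambda^\star$; the normalization $a^\top x^\star=1$ translates directly to $\|z^\star\|^2_{\mathrm{diag}(a)}=1$. For the second-order part, the identity $\langle\nabla f(x^\star), d\odot d\rangle - \lambda^\star\langle a, d\odot d\rangle = \langle\beta^\star, d\odot d\rangle\geq 0$ (by $\beta^\star\geq 0$) reduces the weighted Lagrangian-Hessian inequality to showing $(z^\star\odot d)^\top\nabla^2 f(x^\star)(z^\star\odot d)\geq 0$. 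Setting $u:=z^\star\odot d$, the tangent condition $\langle a\odot z^\star, d\rangle = 0$ becomes exactly $a^\top u=0$, and $[u]_i=0$ wherever $[x^\star]_i=0$; so the nonnegativity follows from the second-order KKT condition of \eqref{C_3}.

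For direction (2), I would define $\beta^\star := \nabla f(x^\star) - \lambda_N^\star a$ and verify each KKT condition of \eqref{C_3} in turn. From the first-order condition of \eqref{NC_3} we get $[\beta^\star]_i = 0$ whenever $[z^\star]_i\neq 0$, which immediately gives complementary slackness $x^\star\odot\beta^\star=0$. For $\beta^\star\geq 0$ I would argue by contradiction: supposing $[\beta^\star]_{k^\star}<0$ with $[z^\star]_{k^\star}=0$, take $d=e_{k^\star}$, which satisfies $\langle a\odot z^\star, d\rangle=0$ and $z^\star\odot d=0$, so the weighted second-order inequality collapses to $2[\beta^\star]_{k^\star}\geq 0$, a contradiction. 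The second-order condition is obtained by lifting any admissible $u$ to $[d]_i = [u]_i/[z^\star]_i$ when $[z^\star]_i\neq 0$ and zero otherwise; a short computation shows $\langle a\odot z^\star, d\rangle = a^\top u = 0$, $z^\star\odot d = u$, and $\langle\beta^\star, d\odot d\rangle = 0$, after which the weighted second-order KKT inequality of \eqref{NC_3} collapses to $u^\top\nabla^2 f(x^\star)u\geq 0$.

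The main obstacle is bookkeeping: one must consistently track how the weight $a$ enters both the gradient relation ($a\odot z^\star$ in place of $z^\star$) and the Hessian of the Lagrangian ($\lambda_N^\star\mathrm{diag}(a)$ in place of $\lambda_N^\star I_n$), and in particular verify that the lifting $[d]_i = [u]_i/[z^\star]_i$ still produces a tangent vector in the \emph{weighted} sense. It does, because the extra factor $a_i$ cancels cleanly: $a_i[z^\star]_i[d]_i = a_i[u]_i$ for indices in the support of $z^\star$, and both sides vanish off-support. Beyond this careful accounting, no new ideas beyond the proof of Theorem~\ref{thm:H1} are needed.
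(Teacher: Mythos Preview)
Your proposal is correct and follows essentially the same route as the paper's proof: write down the weighted KKT systems, multiply the stationarity condition by $z^\star$ and use complementary slackness for direction (1), and for direction (2) define $\beta^\star = \nabla f(x^\star) - \lambda_N^\star a$, establish $\beta^\star\geq 0$ via the test direction $d=e_{k^\star}$, then lift admissible $u$ to $d$ via $[d]_i=[u]_i/[z^\star]_i$. One tiny quibble: the reference to \ref{H1} in direction (1) is superfluous here, since after multiplying by $z^\star$ you directly obtain $\lambda^\star(a\odot z^\star)$ without any simplification via $1_n\odot z^\star = z^\star$.
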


\subsection{The \texorpdfstring{$\ell_1$}{} norm ball} 
Consider the minimization:
\begin{equation}
   \operatorname*{minimize}_{x\in\mathcal{B}^{1}_n}f(x)
    \label{C_4}
    \tag{$\mathrm{C_4}$}
\end{equation}
where the unit $\ell_1$ ball is defined as
\[\mathcal{B}_n^1:=\{x\in\R^n:  \|x\|_1\le1\}\] 
Since $x$ may have negative entries, we use the double Hadamard parametrization $x = z_u\odot z_u - z_v\odot z_v$ and transform \eqref{C_4} to:
\begin{equation}
\operatorname*{minimize}_{(z_u,z_v)\in \mathcal{B}^{2}_{2n}}f(z_u\odot z_u - z_v\odot z_v)
    \tag{$\mathrm{NC_4}$}
    \label{NC_4}
\end{equation}
Due to the nonsmoothness of $\|x\|_1$ (not twice continuously differentiable) in the Lagrangian function of \eqref{C_4}, we consider a simpler situation here.  
\begin{theorem}
Assume $f$ is convex. 
\begin{enumerate}
    \item Suppose that $x^\star$ is a second-order KKT point of \eqref{C_4}. Then, all $(z_u^\star,z^\star_v)$ satisfying ${z_u^\star\odot z_u^\star - z_v^\star\odot z_v^\star} = x^\star$ are second-order KKT points of \eqref{NC_4}.
    
    \item Conversely, suppose $(z_u^\star,z^\star_v)$ is a second-order KKT point of \eqref{NC_4}. Then, $x^\star = {z_u^\star\odot z_u^\star - z_v^\star\odot z_v^\star}$ is a second-order KKT point of \eqref{C_4}.
\end{enumerate}
\label{thm:H4}
\end{theorem}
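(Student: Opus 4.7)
The plan is to follow the proof template of Theorem~\ref{thm:H1}, now applied to the two-copy parametrization $x=z_u\odot z_u - z_v\odot z_v$. Convexity of $f$ plays two roles: it provides $\nabla^2 f(x^\star)\succeq 0$ uniformly, and it collapses ``second-order KKT for $(\mathrm{C_4})$''---whose direct formulation would require smoothness of $\|x\|_1$---to the subgradient inclusion $-\nabla f(x^\star)\in N_{\mathcal{B}_n^1}(x^\star)$, i.e., global optimality of $x^\star$. The key geometric observation is $\|x\|_1\le\|z\|_2^2$ whenever $x=z_u\odot z_u - z_v\odot z_v$, with equality iff for every $i$ at most one of $(z_u)_i,(z_v)_i$ is nonzero. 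Combined with feasibility $\|z^\star\|_2\le 1$, this forces, in the boundary case $\|x^\star\|_1=1$, the ``separated'' pattern $(z_u^\star)_i=0$ when $x^\star_i\le 0$ and $(z_v^\star)_i=0$ when $x^\star_i\ge 0$ (both vanish where $x^\star_i=0$); in the interior case $\|x^\star\|_1<1$, convexity yields $\nabla f(x^\star)=0$ and the KKT identities trivialize.

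For the forward direction, given a subgradient certificate $-\nabla f(x^\star)=\lambda^\star s^\star$ with $s^\star\in\partial\|\cdot\|_1(x^\star)$ and $\lambda^\star\ge 0$, I would set $\mu^\star=\lambda^\star$. The separated structure makes the first-order conditions $(\nabla f(x^\star)+\mu^\star)\odot z_u^\star=0$ and $(\mu^\star-\nabla f(x^\star))\odot z_v^\star=0$ componentwise identities. A computation parallel to \ref{H5} expands the Lagrangian Hessian of $(\mathrm{NC_4})$ as
\begin{align*}
 d^{\top}\bigl[\nabla^2_z \mathcal{L}_N\bigr]d
 =\;& 4(z_u^\star\odot d_u - z_v^\star\odot d_v)^{\top}\nabla^2 f(x^\star)(z_u^\star\odot d_u - z_v^\star\odot d_v) \\
 & + 2\sum_i \bigl(([\nabla f(x^\star)]_i+\mu^\star)[d_u]_i^2 + (\mu^\star-[\nabla f(x^\star)]_i)[d_v]_i^2\bigr),
\end{align*}
which is nonnegative term by term: the first line by convexity of $f$, and every coefficient in the sum by the subgradient bound $|[\nabla f(x^\star)]_i|\le\mu^\star$.

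For the reverse direction, I would extract $\mu^\star\ge 0$ from the first-order KKT of $(\mathrm{NC_4})$ and verify $-\nabla f(x^\star)\in N_{\mathcal{B}_n^1}(x^\star)$ componentwise. The first-order conditions immediately give $[\nabla f(x^\star)]_i=-\mu^\star$ wherever $x^\star_i>0$ and $[\nabla f(x^\star)]_i=+\mu^\star$ wherever $x^\star_i<0$. At indices $i$ with $x^\star_i=0$, the separated structure makes $(z_u^\star)_i=(z_v^\star)_i=0$, so both $d=(e_i,0)$ and $d=(0,e_i)$ are tangent at $z^\star$; testing the second-order condition along these two directions collapses the displayed Hessian to $2([\nabla f(x^\star)]_i+\mu^\star)\ge 0$ and $2(\mu^\star-[\nabla f(x^\star)]_i)\ge 0$ respectively, yielding $|[\nabla f(x^\star)]_i|\le\mu^\star$. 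Packing the signs into $s^\star\in\partial\|\cdot\|_1(x^\star)$ then exhibits $-\nabla f(x^\star)=\mu^\star s^\star\in N_{\mathcal{B}_n^1}(x^\star)$, so by convexity $x^\star$ is a global minimizer.

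The main obstacle is the bookkeeping around the degenerate parametrizations: when $\|z^\star\|_2<1$ the tangency restriction on $d$ vanishes but complementary slackness already forces $\mu^\star=0$, and when $\|x^\star\|_1<1$ the separated structure need not hold for every feasible $z^\star$. Both subcases can be absorbed by observing that there convexity forces $\nabla f(x^\star)=0$---either directly from interiority of $x^\star$ in the $\ell_1$ ball for the forward implication, or, for the reverse, by probing the second-order condition along $d=(e_i,0)$ and $d=(0,e_i)$ at every ``doubly zero'' coordinate. A minor but important up-front point is to record that for convex $f$ the phrase ``second-order KKT point of $(\mathrm{C_4})$'' is to be interpreted as the subgradient inclusion above.
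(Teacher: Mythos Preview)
Your proposal is correct and follows essentially the same route as the paper: both reduce optimality in \eqref{C_4} to the subgradient inclusion $\nabla f(x^\star)\in\lambda^\star\partial\|\cdot\|_1(x^\star)$ via convexity, expand the Lagrangian Hessian of \eqref{NC_4} into a $\nabla^2 f$ block (handled by convexity) plus a diagonal block with entries $\pm[\nabla f]_i-\lambda_N^\star$, split on whether the multiplier vanishes, and in the converse direction probe the second-order condition along $(e_i,0)$ and $(0,e_i)$ at doubly-zero coordinates to recover $|[\nabla f(x^\star)]_i|\le -\lambda_N^\star$. Your norm inequality $\|x\|_1\le\|z\|_2^2$ is in fact a slightly cleaner device than the paper's: the paper bakes the decoupling $z_u^\star\odot z_v^\star=0$ into the definition of an auxiliary set $\mathcal{Z}^\star$ and argues Part~2 by contrapositive, whereas you derive the separated pattern directly from feasibility plus $\|x^\star\|_1=1$; one small point to make explicit in your reverse direction is that the separated structure there comes not from the norm inequality but from combining the two first-order equations to get $\lambda_N^\star\, z_u^\star\odot z_v^\star=0$.
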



    




\begin{center}
\begin{figure*}[!h]
    \centering
    \includegraphics[width=0.33\linewidth]{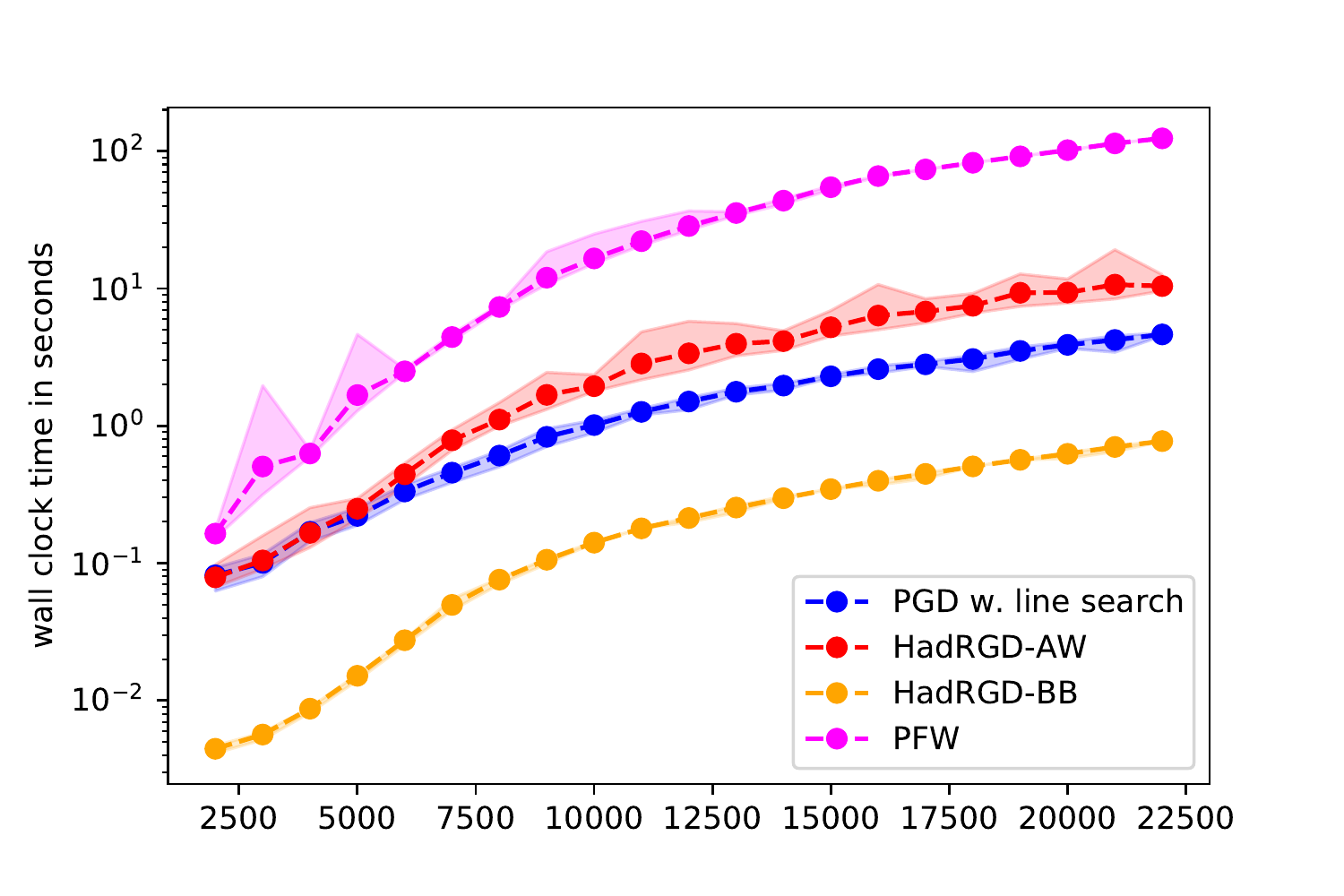}
     \includegraphics[width=0.33\linewidth]{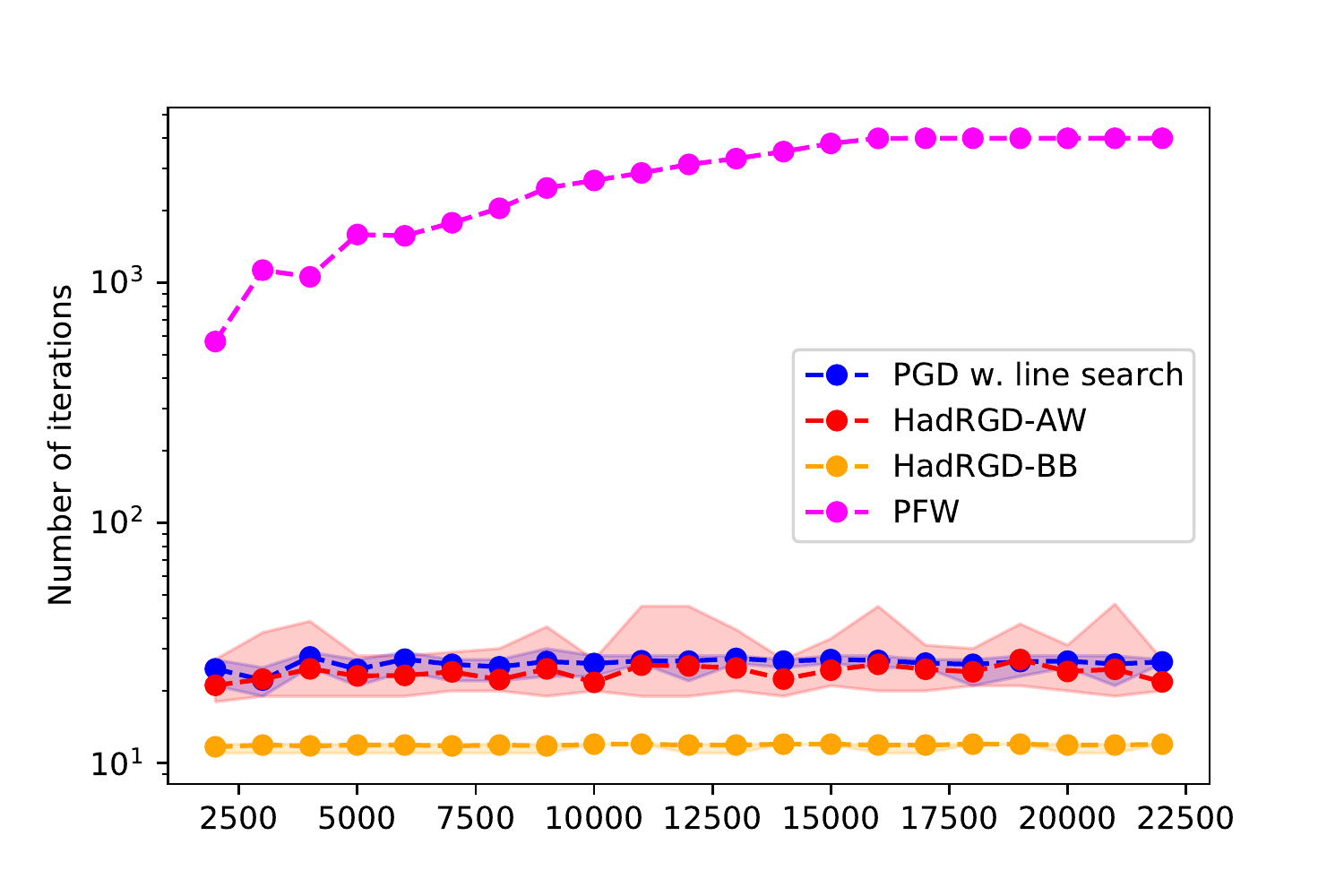}
    \includegraphics[width=0.32\linewidth]{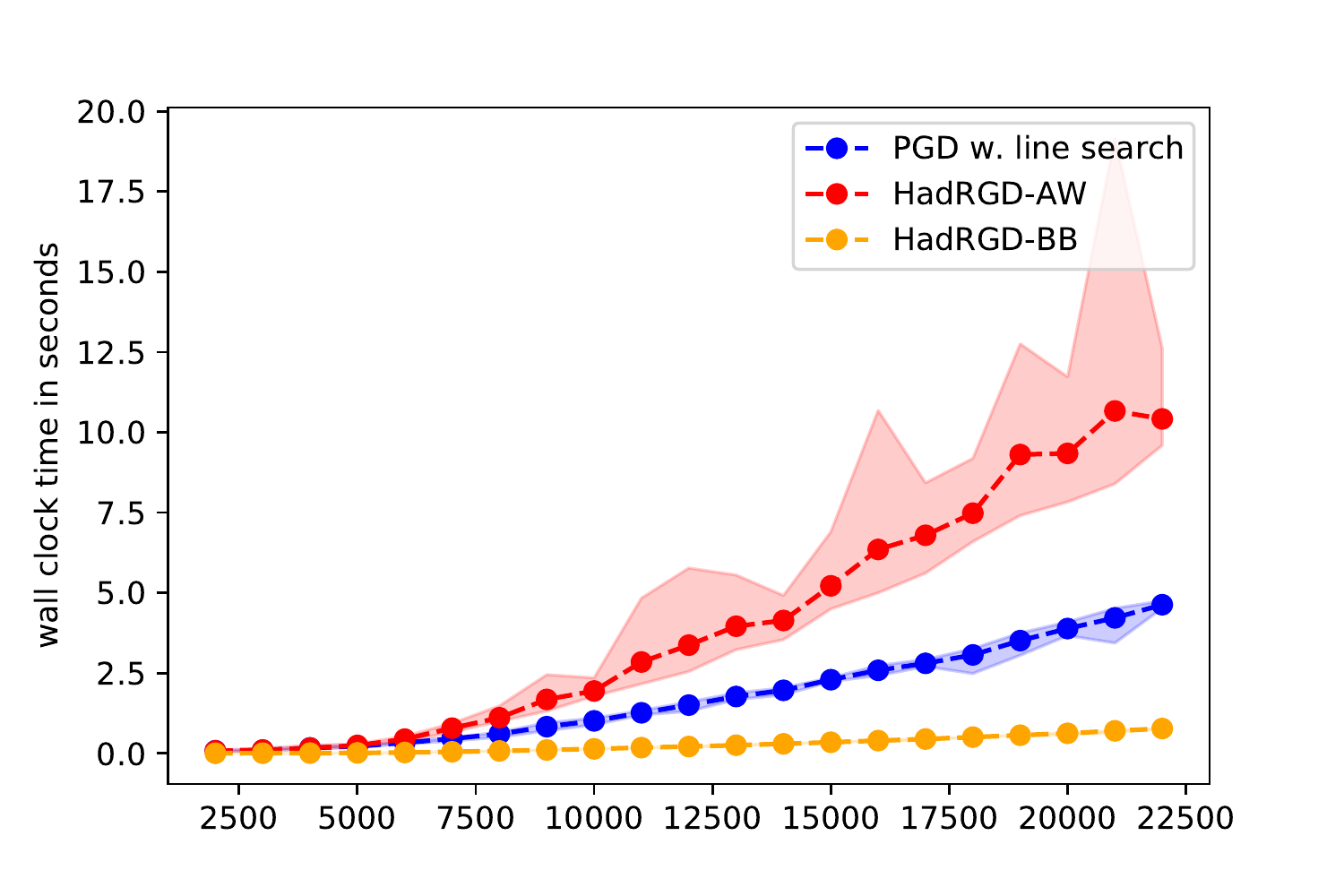}
    \caption{Solving underdetermined least squares with $x_{\mathrm{true}}\in \mathrm{int}(\Delta_n)$ and a target solution accuracy of $10^{-8}$. {\bf Left:} Time required. {\bf Center:} Number of iterations required. {\bf Right:}  Time required, for the three fastest algorithms, not in log scale. All results are averaged over ten trials and the shading denotes the min-max range.}
    \label{fig:LeastSquares_Interior}
\end{figure*} 

~

\begin{figure*}[!h]
    \centering
    \includegraphics[width=0.33\linewidth]{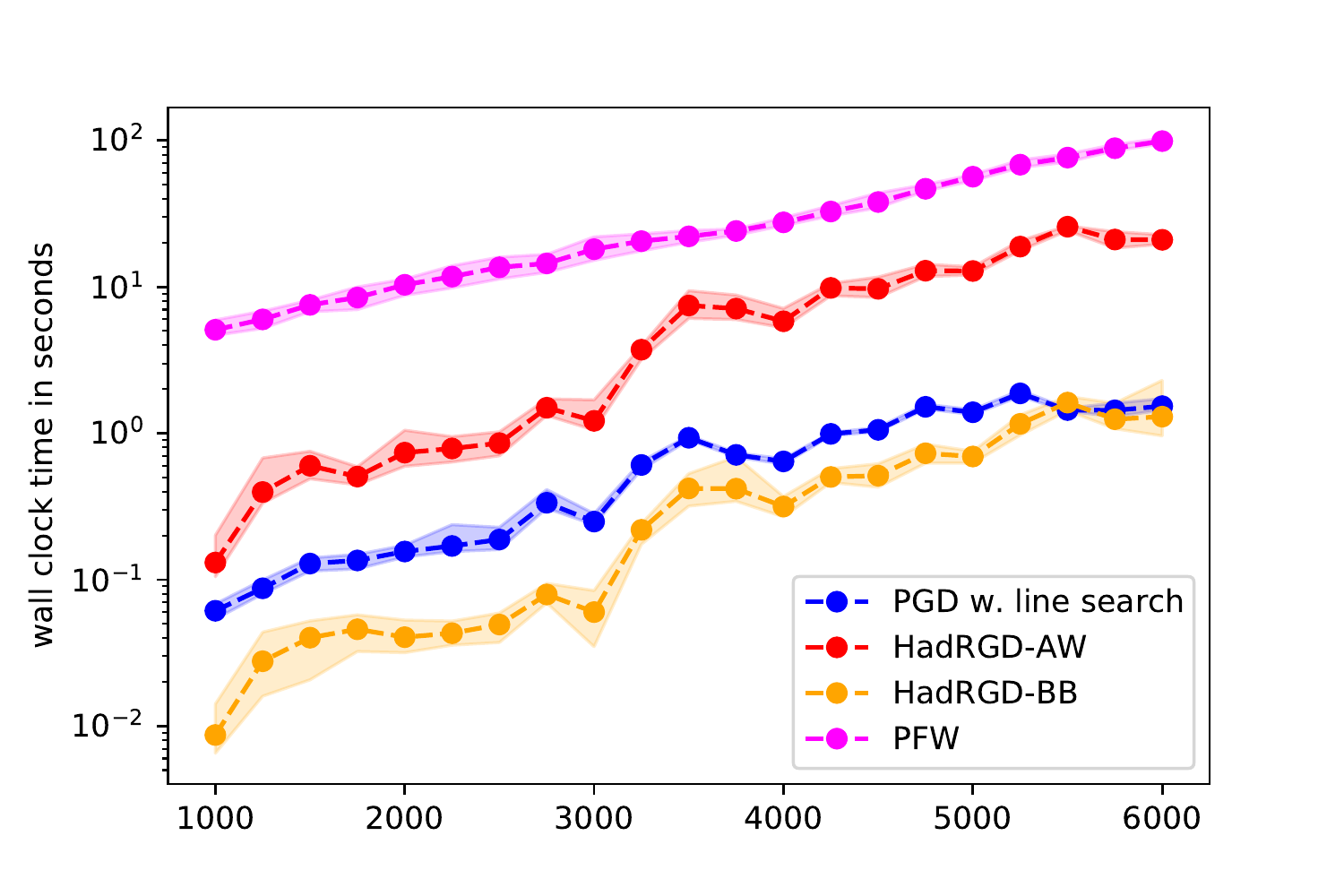}
     \includegraphics[width=0.33\linewidth]{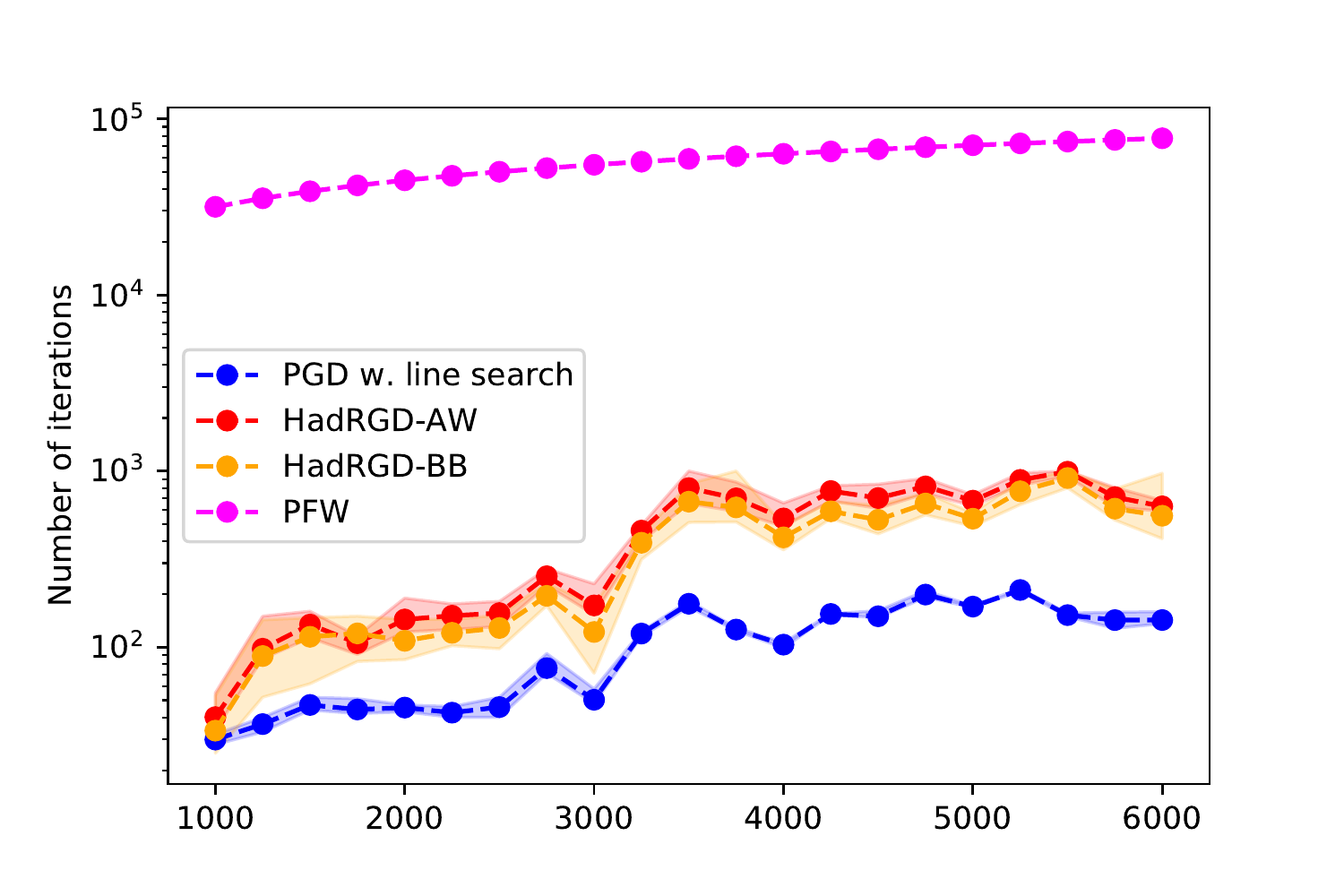}
    \includegraphics[width=0.32\linewidth]{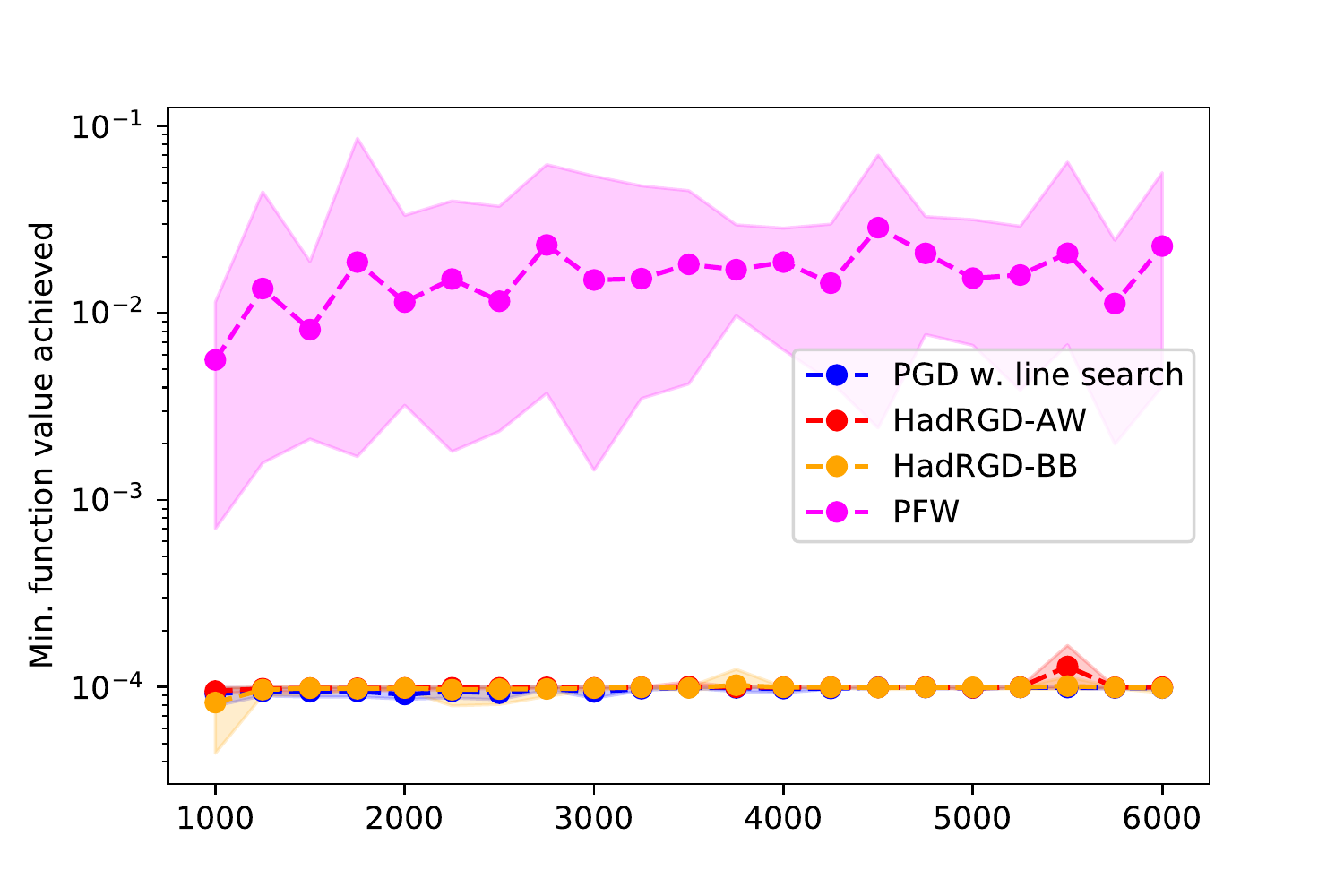}
    \caption{Solving underdetermined least squares with $x_{\mathrm{true}}$ on the boundary of $\Delta_n$ and a target solution accuracy of $10^{-4}$. {\bf Left:} Time required. {\bf Center:} Number of iterations required. {\bf Right:} Final solution accuracy; PFW struggles to reach the target accuracy. All results are averaged over ten trials and the shading denotes the min-max range.}
    \label{fig:LeastSquares_Boundary}
\end{figure*}
\end{center}

\section{Experiments}
\label{sec:Experiments}
We consider again the underdetermined simplex-constrained least squares problem \eqref{eq:Underdetermined_Least_Squares}. We focus on this test problem as it is a key component of algorithms for applications as diverse as portfolio optimization \citep{bomze2002regularity}, archetypal analysis \citep{bauckhage2015archetypal} and hyperspectral unmixing \citep{condat2016fast}. As before $A\in \bbR^{m\times n}$ with $m = 0.1n$ and $b = Ax_{\mathrm{true}}$, but we consider two qualitatively different methods for selecting $x_{\mathrm{true}}$: (i) $x_{\mathrm{true}}$ is sampled uniformly at random from the interior of $\Delta_n$ and (ii) $x_{\mathrm{true}}$ is the projection of a Gaussian random vector to $\Delta_n$ (and hence lies on the boundary of $\Delta_n$). We considered the following benchmark algorithms: Entropic Mirror Descent Algorithm (EMDA) \citep{ben2001ordered,beck2003mirror}; Pairwise Frank-Wolfe (PFW) \citep{pedregosa2020linearly}, both with and without linesearch; and Projected Gradient Descent (PGD) on $\Delta_n$, with and without linesearch. See Appendix~\ref{sec:ExperimentalSettings} for more details and \url{https://github.com/DanielMckenzie/HadRGD} for code. We found EMDA and PFW (without line search) to be non-competitive in both cases (see Appendix~\ref{sec:AdditionalExperiments}), hence here we only present results for PGD (with line search), HadGrad-AW, HadGrad-BB and PFW (with line search). The results for case (i) are presented in Figure~\ref{fig:LeastSquares_Interior} while the results for case (ii) are presented in Figure~\ref{fig:LeastSquares_Boundary}. We record, as a function of $n$, the wall-clock time required to find an $\varepsilon$-solution ($\varepsilon=10^{-8}$ for case (i) and $\varepsilon=10^{-4}$ for case (ii)) or until the maximum number of iterations is reached (1000 for PGD, HadGrad-AW, HadGrad-BB and $1000\sqrt{n}$ for PFW). \\

From Figure~\ref{fig:LeastSquares_Interior} it is clear that HadGrad-BB is the fastest algorithm for problem~\eqref{eq:Underdetermined_Least_Squares}, in the case where there are solutions in $\mathrm{int}(\Delta_n)$, converging an order of magnitude than the others, both in terms of number of iterations and wall-clock time. The situation is less clear for solutions on the boundary of $\Delta_n$ (see Figure~\ref{fig:LeastSquares_Boundary}). We note that a significant amount of the run-time of HadGrad-AW is spent on computing points along the geodesic $\exp_z(\alpha v)$. This can be ameliorated by using a {\em retraction} within the line search, instead of a geodesic \citep{boumal2020introduction}. We leave this for future work.

\section{Conclusion}
In this paper we presented a new framework for transforming a non-smooth constraint set to the unit sphere or ball. We showed that the landscape of the transformed problem is benign if the landscape of the original problem is. Using techniques from Riemannian optimization, we constructed a new algorithm guaranteed to converge to second-order KKT points of the original problem, and proved a rate of convergence for a certain special case. We explored augmenting our new algorithm with various line-search sub-routines. Empirically, we showed that our algorithm could be significantly faster than existing algorithms, particularly when the solution to the original problem lies in the interior of the simplex. Future work could combine our Hadamard reparametrization trick with proximal or sub-gradient methods for non-smooth $f$, or SGD-style methods for finite sum objectives: $f(x) = \sum_{i=1}^nf_i(x)$.


\bibliography{References}

\begin{thebibliography}{}

\bibitem[Ali et~al., 2019]{ali2019continuous}
Ali, A., Kolter, J.~Z., and Tibshirani, R.~J. (2019).
\newblock A continuous-time view of early stopping for least squares
  regression.
\newblock In {\em The 22nd International Conference on Artificial Intelligence
  and Statistics}, pages 1370--1378. PMLR.

\bibitem[Avdiukhin et~al., 2019]{avdiukhin2019escaping}
Avdiukhin, D., Jin, C., and Yaroslavtsev, G. (2019).
\newblock Escaping saddle points with inequality constraints via noisy sticky
  projected gradient descent.
\newblock In {\em Optimization for Machine Learning Workshop}.

\bibitem[Bauckhage et~al., 2015]{bauckhage2015archetypal}
Bauckhage, C., Kersting, K., Hoppe, F., and Thurau, C. (2015).
\newblock Archetypal analysis as an autoencoder.
\newblock In {\em Workshop New Challenges in Neural Computation}, page~8.

\bibitem[Bauer et~al., 2007]{bauer2007regularization}
Bauer, F., Pereverzev, S., and Rosasco, L. (2007).
\newblock On regularization algorithms in learning theory.
\newblock {\em Journal of complexity}, 23(1):52--72.

\bibitem[Beck and Teboulle, 2003]{beck2003mirror}
Beck, A. and Teboulle, M. (2003).
\newblock Mirror descent and nonlinear projected subgradient methods for convex
  optimization.
\newblock {\em Operations Research Letters}, 31(3):167--175.

\bibitem[Ben-Tal et~al., 2001]{ben2001ordered}
Ben-Tal, A., Margalit, T., and Nemirovski, A. (2001).
\newblock The ordered subsets mirror descent optimization method with
  applications to tomography.
\newblock {\em SIAM Journal on Optimization}, 12(1):79--108.

\bibitem[Bertsekas, 1997]{bertsekas1997nonlinear}
Bertsekas, D.~P. (1997).
\newblock Nonlinear programming.
\newblock {\em Journal of the Operational Research Society}, 48(3):334--334.

\bibitem[Blondel et~al., 2014]{blondel2014large}
Blondel, M., Fujino, A., and Ueda, N. (2014).
\newblock Large-scale multiclass support vector machine training via euclidean
  projection onto the simplex.
\newblock In {\em 2014 22nd International Conference on Pattern Recognition},
  pages 1289--1294. IEEE.

\bibitem[Bomze, 2002]{bomze2002regularity}
Bomze, I.~M. (2002).
\newblock Regularity versus degeneracy in dynamics, games, and optimization:
  {A} unified approach to different aspects.
\newblock {\em SIAM review}, 44(3):394--414.

\bibitem[Boumal, 2020]{boumal2020introduction}
Boumal, N. (2020).
\newblock An introduction to optimization on smooth manifolds.
\newblock {\em Available online, Aug}.

\bibitem[B{\"u}hlmann and Yu, 2003]{buhlmann2003boosting}
B{\"u}hlmann, P. and Yu, B. (2003).
\newblock Boosting with the $\ell_2$ loss: regression and classification.
\newblock {\em Journal of the American Statistical Association},
  98(462):324--339.

\bibitem[Bunea et~al., 2010]{bunea2010spades}
Bunea, F., Tsybakov, A.~B., Wegkamp, M.~H., and Barbu, A. (2010).
\newblock Spades and mixture models.
\newblock {\em The Annals of Statistics}, 38(4):2525--2558.

\bibitem[Carmon and Duchi, 2020]{carmon2020first}
Carmon, Y. and Duchi, J.~C. (2020).
\newblock First-order methods for nonconvex quadratic minimization.
\newblock {\em SIAM Review}, 62(2):395--436.

\bibitem[Chen and Ye, 2011]{chen2011projection}
Chen, Y. and Ye, X. (2011).
\newblock Projection onto a simplex.
\newblock {\em arXiv preprint arXiv:1101.6081}.

\bibitem[Clarkson, 2010]{clarkson2010coresets}
Clarkson, K.~L. (2010).
\newblock Coresets, sparse greedy approximation, and the {F}rank-{W}olfe
  algorithm.
\newblock {\em ACM Transactions on Algorithms (TALG)}, 6(4):1--30.

\bibitem[Condat, 2016]{condat2016fast}
Condat, L. (2016).
\newblock Fast projection onto the simplex and the $\ell_1$ ball.
\newblock {\em Mathematical Programming, Series A}, 158(1):575--585.

\bibitem[Criscitiello and Boumal, 2019]{criscitiello2019efficiently}
Criscitiello, C. and Boumal, N. (2019).
\newblock Efficiently escaping saddle points on manifolds.
\newblock {\em Advances in Neural Information Processing Systems},
  32:5987--5997.

\bibitem[De~Klerk et~al., 2006]{de2006improved}
De~Klerk, E., Maharry, J., Pasechnik, D.~V., Richter, R.~B., and Salazar, G.
  (2006).
\newblock Improved bounds for the crossing numbers of km, n and kn.
\newblock {\em SIAM Journal on Discrete Mathematics}, 20(1):189--202.

\bibitem[Duchi et~al., 2008]{duchi2008efficient}
Duchi, J., Shalev-Shwartz, S., Singer, Y., and Chandra, T. (2008).
\newblock Efficient projections onto the $\ell_1$-ball for learning in high
  dimensions.
\newblock In {\em Proceedings of the 25th international conference on Machine
  learning}, pages 272--279.

\bibitem[Dutta et~al., 2013]{dutta2013approximate}
Dutta, J., Deb, K., Tulshyan, R., and Arora, R. (2013).
\newblock Approximate kkt points and a proximity measure for termination.
\newblock {\em Journal of Global Optimization}, 56(4):1463--1499.

\bibitem[Fabian~Pedregosa, 2020]{copt}
Fabian~Pedregosa, Geoffrey~Negiar, G.~D. (2020).
\newblock {COPT}: {C}omposite {OPT}imization in python.

\bibitem[Frank et~al., 1956]{frank1956algorithm}
Frank, M., Wolfe, P., et~al. (1956).
\newblock An algorithm for quadratic programming.
\newblock {\em Naval research logistics quarterly}, 3(1-2):95--110.

\bibitem[Ge et~al., 2015]{ge2015escaping}
Ge, R., Huang, F., Jin, C., and Yuan, Y. (2015).
\newblock Escaping from saddle points—online stochastic gradient for tensor
  decomposition.
\newblock In {\em Conference on learning theory}, pages 797--842. PMLR.

\bibitem[Jaggi, 2013]{jaggi2013revisiting}
Jaggi, M. (2013).
\newblock Revisiting {F}rank-{W}olfe: Projection-free sparse convex
  optimization.
\newblock In {\em International Conference on Machine Learning}, pages
  427--435. PMLR.

\bibitem[Jin et~al., 2017]{jin2017escape}
Jin, C., Ge, R., Netrapalli, P., Kakade, S.~M., and Jordan, M.~I. (2017).
\newblock How to escape saddle points efficiently.
\newblock In {\em International Conference on Machine Learning}, pages
  1724--1732. PMLR.

\bibitem[Keshava, 2003]{keshava2003survey}
Keshava, N. (2003).
\newblock A survey of spectral unmixing algorithms.
\newblock {\em Lincoln laboratory journal}, 14(1):55--78.

\bibitem[Koh et~al., 2007]{koh2007interior}
Koh, K., Kim, S.-J., and Boyd, S. (2007).
\newblock An interior-point method for large-scale $\ell_1$-regularized
  logistic regression.
\newblock {\em Journal of Machine learning research}, 8(Jul):1519--1555.

\bibitem[Lacoste-Julien and Jaggi, 2015]{lacoste2015global}
Lacoste-Julien, S. and Jaggi, M. (2015).
\newblock On the global linear convergence of {F}rank-{W}olfe optimization
  variants.
\newblock In {\em NIPS 2015-Advances in Neural Information Processing Systems
  28}.

\bibitem[Levin et~al., 2022]{levin2022effect}
Levin, E., Kileel, J., and Boumal, N. (2022).
\newblock The effect of smooth parametrizations on nonconvex optimization
  landscapes.
\newblock {\em arXiv preprint arXiv:2207.03512}.

\bibitem[Li et~al., 2019a]{li2019non}
Li, Q., Zhu, Z., and Tang, G. (2019a).
\newblock The non-convex geometry of low-rank matrix optimization.
\newblock {\em Information and Inference: A Journal of the IMA}, 8(1):51--96.

\bibitem[Li et~al., 2019b]{li2019geometry}
Li, Q., Zhu, Z., Tang, G., and Wakin, M.~B. (2019b).
\newblock The geometry of equality-constrained global consensus problems.
\newblock In {\em ICASSP 2019-2019 IEEE International Conference on Acoustics,
  Speech and Signal Processing (ICASSP)}, pages 7928--7932. IEEE.

\bibitem[Li et~al., 2020]{li2020global}
Li, S., Li, Q., Zhu, Z., Tang, G., and Wakin, M.~B. (2020).
\newblock The global geometry of centralized and distributed low-rank matrix
  recovery without regularization.
\newblock {\em IEEE Signal Processing Letters}, 27:1400--1404.

\bibitem[Limmer and Sta{\'n}czak, 2018]{limmer2018neural}
Limmer, S. and Sta{\'n}czak, S. (2018).
\newblock A neural architecture for {B}ayesian compressive sensing over the
  simplex via {L}aplace techniques.
\newblock {\em IEEE Transactions on Signal Processing}, 66(22):6002--6015.

\bibitem[Lu et~al., 2019]{lu2019perturbed}
Lu, S., Zhao, Z., Huang, K., and Hong, M. (2019).
\newblock Perturbed projected gradient descent converges to approximate
  second-order points for bound constrained nonconvex problems.
\newblock In {\em ICASSP 2019-2019 IEEE International Conference on Acoustics,
  Speech and Signal Processing (ICASSP)}, pages 5356--5360. IEEE.

\bibitem[Luenberger, 1972]{luenberger1972gradient}
Luenberger, D.~G. (1972).
\newblock The gradient projection method along geodesics.
\newblock {\em Management Science}, 18(11):620--631.

\bibitem[Mokhtari et~al., 2018]{mokhtari2018escaping}
Mokhtari, A., Ozdaglar, A., and Jadbabaie, A. (2018).
\newblock Escaping saddle points in constrained optimization.
\newblock {\em arXiv preprint arXiv:1809.02162}.

\bibitem[Nemirovski, 2000]{nemirovski2000ecole}
Nemirovski, A. (2000).
\newblock Ecole d’ete de probabilites de saint-flour {XXVIII},
  chapter’topics in non-parametric statistics’.

\bibitem[Nouiehed et~al., 2018]{nouiehed2018convergence}
Nouiehed, M., Lee, J.~D., and Razaviyayn, M. (2018).
\newblock Convergence to second-order stationarity for constrained non-convex
  optimization.
\newblock {\em arXiv preprint arXiv:1810.02024}.

\bibitem[Panageas et~al., 2019]{panageas2019first}
Panageas, I., Piliouras, G., and Wang, X. (2019).
\newblock First-order methods almost always avoid saddle points: The case of
  vanishing step-sizes.
\newblock {\em Advances in Neural Information Processing Systems}, 32.

\bibitem[Pedregosa et~al., 2020]{pedregosa2020linearly}
Pedregosa, F., Negiar, G., Askari, A., and Jaggi, M. (2020).
\newblock Linearly convergent {F}rank-{W}olfe with backtracking line-search.
\newblock In {\em International Conference on Artificial Intelligence and
  Statistics}, pages 1--10. PMLR.

\bibitem[Perez et~al., 2020]{perez2020filtered}
Perez, G., Barlaud, M., Fillatre, L., and R{\'e}gin, J.-C. (2020).
\newblock A filtered bucket-clustering method for projection onto the simplex
  and the $\ell_1$ ball.
\newblock {\em Mathematical Programming}, 182(1):445--464.

\bibitem[Royer et~al., 2020]{royer2020newton}
Royer, C.~W., O’Neill, M., and Wright, S.~J. (2020).
\newblock A newton-cg algorithm with complexity guarantees for smooth
  unconstrained optimization.
\newblock {\em Mathematical Programming}, 180(1):451--488.

\bibitem[Shalev-Shwartz and Singer, 2006]{shalev2006efficient}
Shalev-Shwartz, S. and Singer, Y. (2006).
\newblock Efficient learning of label ranking by soft projections onto
  polyhedra.

\bibitem[Sun et~al., 2015]{sun2015complete}
Sun, J., Qu, Q., and Wright, J. (2015).
\newblock Complete dictionary recovery over the sphere.
\newblock In {\em 2015 International Conference on Sampling Theory and
  Applications (SampTA)}, pages 407--410. IEEE.

\bibitem[Sun et~al., 2016]{sun2016complete}
Sun, J., Qu, Q., and Wright, J. (2016).
\newblock Complete dictionary recovery over the sphere i: Overview and the
  geometric picture.
\newblock {\em IEEE Transactions on Information Theory}, 63(2):853--884.

\bibitem[Sun et~al., 2018]{sun2018geometric}
Sun, J., Qu, Q., and Wright, J. (2018).
\newblock A geometric analysis of phase retrieval.
\newblock {\em Foundations of Computational Mathematics}, 18(5):1131--1198.

\bibitem[Sun et~al., 2019]{sun2019escaping}
Sun, Y., Flammarion, N., and Fazel, M. (2019).
\newblock Escaping from saddle points on riemannian manifolds.
\newblock {\em Advances in Neural Information Processing Systems}, 32.

\bibitem[Vaskevicius et~al., 2019]{vavskevivcius2019implicit}
Vaskevicius, T., Kanade, V., and Rebeschini, P. (2019).
\newblock Implicit regularization for optimal sparse recovery.
\newblock {\em Advances in Neural Information Processing Systems},
  32:2972--2983.

\bibitem[Wang and Carreira-Perpin{\'a}n, 2013]{wang2013projection}
Wang, W. and Carreira-Perpin{\'a}n, M.~A. (2013).
\newblock Projection onto the probability simplex: An efficient algorithm with
  a simple proof, and an application.
\newblock {\em arXiv preprint arXiv:1309.1541}.

\bibitem[Wen and Yin, 2013]{wen2013feasible}
Wen, Z. and Yin, W. (2013).
\newblock A feasible method for optimization with orthogonality constraints.
\newblock {\em Mathematical Programming}, 142(1):397--434.

\bibitem[Yang, 2007]{yang2007globally}
Yang, Y. (2007).
\newblock Globally convergent optimization algorithms on {R}iemannian
  manifolds: Uniform framework for unconstrained and constrained optimization.
\newblock {\em Journal of Optimization Theory and Applications},
  132(2):245--265.

\bibitem[You et~al., 2020]{you2020robust}
You, C., Zhu, Z., Qu, Q., and Ma, Y. (2020).
\newblock Robust recovery via implicit bias of discrepant learning rates for
  double over-parameterization.
\newblock {\em Advances in Neural Information Processing Systems},
  33:17733--17744.

\bibitem[Zeeman, 1980]{zeeman1980population}
Zeeman, E.~C. (1980).
\newblock Population dynamics from game theory.
\newblock In {\em Global theory of dynamical systems}, pages 471--497.
  Springer.

\bibitem[Zhang, 2017]{zhang2017restricted}
Zhang, H. (2017).
\newblock The restricted strong convexity revisited: analysis of equivalence to
  error bound and quadratic growth.
\newblock {\em Optimization Letters}, 11(4):817--833.

\bibitem[Zhang and Hager, 2004]{zhang2004nonmonotone}
Zhang, H. and Hager, W.~W. (2004).
\newblock A nonmonotone line search technique and its application to
  unconstrained optimization.
\newblock {\em SIAM journal on Optimization}, 14(4):1043--1056.

\bibitem[Zhao et~al., 2019]{zhao2019implicit}
Zhao, P., Yang, Y., and He, Q.-C. (2019).
\newblock Implicit regularization via {H}adamard product over-parametrization
  in high-dimensional linear regression.
\newblock {\em arXiv preprint arXiv:1903.09367}.

\bibitem[Zhu et~al., 2018]{zhu2018global}
Zhu, Z., Li, Q., Tang, G., and Wakin, M.~B. (2018).
\newblock Global optimality in low-rank matrix optimization.
\newblock {\em IEEE Transactions on Signal Processing}, 66(13):3614--3628.

\bibitem[Zhu et~al., 2021]{zhu2021global}
Zhu, Z., Li, Q., Tang, G., and Wakin, M.~B. (2021).
\newblock The global optimization geometry of low-rank matrix optimization.
\newblock {\em IEEE Transactions on Information Theory}, 67(2):1308--1331.

\end{thebibliography}


\appendix

\onecolumn

\section{Proof of Hadamard Calculus}

\begin{description}
    \item[{($\mathrm{H1}$)}]  $1_n\odot z = z$ where $1_n$ is the all-one vector in $\R^n$
     \begin{proof}
    This follows from the definition of Hadamard product.
    \end{proof}
    
    \item[{($\mathrm{H2}$)}] If $d\odot z\odot z = 0$ then $d\odot z = 0$
    
  \begin{proof}
   We can multiply $d$ on both sides of left-hand-side equation: $d\odot d\odot z\odot z=0$, which implies $(d\odot z)\odot (d\odot z)=0$. So, Therefore, $d\odot z=0$.
  \end{proof}

    \item[{($\mathrm{H3}$)}] 
        $ \mathrm{diag}(z)d = z\odot d $ and
        $ d^{\top}\mathrm{diag}(z)d = \langle d, z\odot d \rangle= \langle z,d\odot d \rangle $
         
  \begin{proof}
    The first line is because $[\diag(z) d]_k = [z]_k [d]_k =[z\odot d]_k$. The second line follows by multiplying $d^\top$ on both sides and $\langle d, z\odot d \rangle=\sum_k [d]_k [z]_k [d]_k = \sum_k [z]_k [d]_k^2 =\langle z,d\odot d \rangle. $
  \end{proof}

    \item[{($\mathrm{H4}$)}] $\|d\odot z\|_2 \leq \|d\|_2\|z\|_{\infty}$
    
  \begin{proof}
    Note that 
    \[\|d\odot z\|_2=\sqrt{\sum_k [d]_k^2 [z]_k^2}\le \sqrt{\sum_k  [d]_k^2 z_{\max}^2} = z_{\max} \sqrt{\sum_k  [d]_k^2}=\|d\|_2\|z\|_{\infty}\] 
    where $z_{\max}^2:=\max_k [z]_k^2$.
    \end{proof}

    \item[{($\mathrm{H5}$)}] $\nabla g(z) = 2 \nabla f(x) \odot z$  and $\nabla^2 g(z) = 2\mathrm{diag}(\nabla f(x))+4 \mathrm{diag}(z) \nabla^2 f(x) \mathrm{diag}(z)$.
    
  \begin{proof}
    One one hand, the Taylor expansion of $g$ around $z$ for some arbitrary small direction $d$ is given by 
   \begin{align}
   g(z+d) = g(z) + d^\top \nabla g(z) +  \frac{1}{2} d^\top \nabla^2 g(z) d + o(\|d\|^2_2)
   \label{H5:Proof:1}
   \end{align}
   On the other hand, we have 
   \begin{align}
       g(z+d)&=f((z+d)\odot (z+d))=f(z\odot z + 2 z\odot d +  d\odot d) 
       \nonumber\\
       &= f(z\odot z) + (2z\odot d + d\odot d)^\top\nabla f(z\odot z) + \frac{1}{2} (2 z\odot d)^\top \nabla^2 f(z\odot z) (2z\odot d)+o(\|d\|^2) 
          \nonumber    \\
       &= g(z) + 2d^\top(\nabla f(x)\odot z)+  (d\odot d)^\top\nabla f(x) + \frac{4}{2}  d^\top [\diag(z)\nabla^2 f(x)\diag(z)] d+o(\|d\|^2)
       \nonumber\\
      &= g(z) + 2d^\top(\nabla f(x)\odot z)+ 
      d^\top \diag(\nabla f(x)) d +  2 d^\top [\diag(z)\nabla^2 f(x)\diag(z)] d+o(\|d\|^2)
      \label{H5:Proof:2}
   \end{align}
   Combining and rearranging \eqref{H5:Proof:1} and \eqref{H5:Proof:2}, we complete the proof.
   
   \end{proof}
\end{description}

\newpage 

\section{Proof of Lemma~\ref{lemma:Lipschitz}}
\label{app:RiemGeom}
\noindent {\bf Lemma~\ref{lemma:Lipschitz}.} \textit{Suppose $f(x)$ is $L$-Lipschitz differentiable. Then $g$ is $\tilde{L}$-Lipschitz differentiable with $\tilde{L} = 4L + 2M$ where $M = \max_{x\in \Delta_n}\|\nabla f(x)\|_2$.} \\

\begin{proof} 
Recalling $\nabla_zg(z) = 2\nabla_xf(z\odot z)\odot z$ we compute
\begin{align}
\| \nabla g(z_1) - \nabla g(z_2)\|_2 & = 2\|\nabla_x f(z_1\odot z_1)\odot z_1 - \nabla_x f(z_2\odot z_2)\odot z_2 \|_2 \nonumber \\
    & = 2\|\nabla_x f(z_1\odot z_1)\odot z_1  - \nabla_x f(z_2\odot z_2)\odot z_1 + \nabla_x f(z_2\odot z_2)\odot z_1 - \nabla_x f(z_2\odot z_2)\odot z_2 \|_2 \nonumber\\
    & \leq 2\|\left(\nabla_x f(z_1\odot z_1) - \nabla_x f(z_2\odot z_2)\right)\odot z_1\|_2 + 2\|\nabla_x f(z_2\odot z_2)\odot\left(z_1 - z_2\right)\|_2 \nonumber \\
    & \leq 2\|\nabla_x f(z_1\odot z_1) - \nabla_x f(z_2\odot z_2)\|_2 \|z_1\|_{\infty} + 2\|\nabla_x f(z_2\odot z_2)\|_{\infty} \|z_1 - z_2\|_2 \nonumber \\ 
    & \leq 2 \left(L \|z_1\odot z_1 - z_2 \odot z_2\|_2\right)\left(1\right) + 2M \|z_1 - z_2\|_2 \nonumber \\
    & \leq 2L\left(2\|z_1 - z_2\|_2 \right)+ 2M \|z_1 - z_2\|_2 \nonumber \\
    & = \left(4L + 2M\right)\|z_1 - z_2\|_2 \nonumber
\end{align}
where the second inequality follows from \ref{H4} while the last inequality follows from Lemma~\ref{lem:LittleLemma}.
\end{proof}

\begin{lemma}
If $z_1,z_2 \in \mathcal{S}_n$ then $\|z_1\odot z_1 - z_2\odot z_2\|_2 \leq 2\|z_1 - z_2\|_2$.
\label{lem:LittleLemma}
\end{lemma}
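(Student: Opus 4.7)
The plan is to exploit the algebraic identity $a^2 - b^2 = (a-b)(a+b)$ componentwise, which for Hadamard products reads
\[
z_1 \odot z_1 - z_2 \odot z_2 = (z_1 - z_2) \odot (z_1 + z_2).
\]
This rewrites the quantity of interest as a Hadamard product of two vectors whose norms are already controlled: $z_1 - z_2$ is exactly the distance we want on the right, and $z_1 + z_2$ has total length at most $2$ by the triangle inequality.

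My first step would be to verify the factorization componentwise: for each $i$, $[z_1]_i^2 - [z_2]_i^2 = ([z_1]_i - [z_2]_i)([z_1]_i + [z_2]_i)$, which is just the standard difference-of-squares identity, and by definition of $\odot$ this gives the vector identity above. The second step is to invoke property \ref{H4} with $d = z_1 - z_2$ and $z = z_1 + z_2$:
\[
\|(z_1 - z_2) \odot (z_1 + z_2)\|_2 \leq \|z_1 - z_2\|_2 \, \|z_1 + z_2\|_\infty.
\]
The third step is to bound $\|z_1 + z_2\|_\infty$ by $2$. Since $\|\cdot\|_\infty \leq \|\cdot\|_2$, the triangle inequality gives $\|z_1 + z_2\|_\infty \leq \|z_1 + z_2\|_2 \leq \|z_1\|_2 + \|z_2\|_2 = 1 + 1 = 2$, where the final equality uses the sphere constraint $z_1, z_2 \in \mathcal{S}_n$. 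Chaining the two inequalities yields the claim.

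There is no substantive obstacle here; the whole argument is a one-line application of \ref{H4} once the difference-of-squares factorization is made. The sphere assumption enters only through the crude bound $\|z_i\|_2 = 1$, and in fact the lemma holds more generally with constant $\|z_1\|_\infty + \|z_2\|_\infty$ for arbitrary $z_1, z_2$.
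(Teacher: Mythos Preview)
Your proof is correct. The paper's argument is almost the same: it writes $z_1\odot z_1 - z_2\odot z_2 = z_1\odot(z_1-z_2) + (z_1-z_2)\odot z_2$, applies the triangle inequality, and then uses \ref{H4} twice together with $\|z_i\|_\infty \le \|z_i\|_2 = 1$. Your difference-of-squares factorization $(z_1-z_2)\odot(z_1+z_2)$ is just the same two terms combined before bounding, so you need only one application of \ref{H4}; the route through $\|z_1+z_2\|_\infty \le \|z_1+z_2\|_2 \le 2$ then plays the role the paper's two separate $\ell_\infty$ bounds play. Your closing remark that the general constant is $\|z_1\|_\infty + \|z_2\|_\infty$ is exactly what the paper's splitting yields directly, so the two arguments are minor variants of one another.
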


\begin{proof}
\begin{align*}
    \|z_1\odot z_1 - z_2\odot z_2\|_2 & = \|z_1\odot z_1-z_1\odot z_2 + z_1\odot z_2 - z_2\odot z_2\|_2 \\
    & \leq \|z_1\odot (z_1 - z_2)\|_2 + \|(z_1 - z_2)\odot z_2\|_2 \\
    & \leq \|z_1\|_{\infty}\|z_1 - z_2\|_2 + \|z_1 - z_2\|_2\|z_2\|_{\infty} \\
    & \leq (1) \|z_1 - z_2\|_2 + \|z_1 - z_2\|_2 (1)  = 2\|z_1 - z_2\|_2
\end{align*}
where the second inequality follows from \ref{H4}.
\end{proof}

\section{Algorithms}
\label{app:Algos}
Recall $\alpha^\star$ satisfies the (Riemannian) Armijo-Wolfe conditions along the geodesic traced out by $\exp_{z}(\alpha v)$ if
\begin{subequations}
\begin{align}
& g\left(\exp_{z}(\alpha^\star v)\right) \leq g\left(z\right) + \rho_1\alpha^\star\langle \grad g(z), v\rangle \label{eq:Armijo}\\
& g^{\prime}\left(\exp_{z}(\alpha^\star v)\right) \geq \rho_2\langle \grad g(z), v\rangle \label{eq:CurvCond}
\end{align}
\end{subequations}
When $v = -\operatorname{grad}g(z)$ this simplifies to:
\begin{subequations}
\begin{align}
& g\left(\exp_{z}(\alpha^\star v)\right) \leq g\left(z\right) - \rho_1\alpha^\star\|\operatorname{grad}g(z)\|_2^2 \label{eq:Armijo}\\
& g^{\prime}\left(\exp_{z}(\alpha^\star v)\right) \geq \rho_2\|\operatorname{grad}g(z)\|_2^2 \label{eq:CurvCond}
\end{align}
\end{subequations}
We use this version of the Armijo-Wolfe conditions in Algorithm~\ref{alg:HadRGD-AW}, with the notation $\nabla_k = \operatorname{grad}g(z_k)$.
\begin{algorithm}
\caption{HadPRGD for \eqref{C_1}}
\label{alg:HadPRGD}
\textbf{Input}: $x_0 \in \Delta_n$: initial point, $\alpha$: step size, $K$: number of iterations, $f$: original objective function
\begin{algorithmic}[1] 
\STATE $z_0 = \sqrt{x_0}$ \COMMENT{(Defined componentwise)}
\STATE $g(z) := f(z\odot z)$
\FOR{k=1,\ldots, K}
    \IF{$\|\operatorname{grad} g(z_k)\|_2 > \varepsilon$}
        \STATE $z_{k+1} = \operatorname{exp}_{x_k}(-\alpha\operatorname{grad}f(x_k))$
    \ELSE 
        \STATE $\xi \sim \mathrm{Uniform}(B_{z_k,r}(0))$
        \STATE $s_0 = \eta\xi$
        \STATE $z_{k+\mathcal{T}} \gets \mathrm{TangentSpaceSteps}(z_k, s_0, \eta, b, \mathcal{T})$ \COMMENT{(See Algorithm~\ref{alg:TangentSpaceSteps})}
    \ENDIF
    
\ENDFOR
\end{algorithmic}
\textbf{Return} $x_{K} = z_{K}\odot z_{K}$
\end{algorithm}

\begin{algorithm}
\caption{TangentSpaceSteps}
\label{alg:TangentSpaceSteps}
\textbf{Input}: $z, s_0, \alpha, \eta, b, \mathcal{T}$.
\begin{algorithmic}[1] 
\FOR{$j=1,\ldots, \mathcal{T}$}
    \STATE $s_{j+1}= s_j - \eta\grad f_z(s_j)$
    \IF{$\|s_{j+1}\|_2 \geq b$}
        \STATE $s_{\mathcal{T}} = s_{j} - \alpha\eta\nabla \hat{f}_z(s_j)$
        \STATE Break.
    \ENDIF
\ENDFOR
    \STATE Return
    $\operatorname{Proj}_z(s_{\mathcal{T}})$
\end{algorithmic}
\textbf{Return} $x_{K} = z_{K}\odot z_{K}$
\end{algorithm}

\begin{algorithm}
\caption{HadRGD-AW for \eqref{C_1}}
\label{alg:HadRGD-AW}
\textbf{Input}: $x_0 \in \Delta_n$: initial point, $\alpha_{\mathrm{def}}$: default step size, $\beta$: decay factor, $\rho_1$: Armijo condition tolerance, $\rho_2$: Wolfe condition tolerance, $K$: number of iterations, $f$: original objective function
\begin{algorithmic}[1] 
\STATE $z_0 = \sqrt{x_0}$ \COMMENT{(Defined componentwise)}
\STATE $g(z) := f(z\odot z)$
\FOR{$k=1,\ldots,K$}
    \STATE $\nabla_k \gets \operatorname{grad}g(z_k)$
    \STATE $m=0$
    \STATE {\tt ArmijoFlag}=False
    \STATE {\tt WolfeFlag}=False
    \WHILE{{\tt ArmijoFlag}=False and {\tt WolfeFlag}=False and $m \leq 25$}
        \STATE $\alpha \gets \alpha_{\mathrm{def}}\beta^m$
        \STATE $m \gets m+1$
        \IF{$g\left(\exp_{z_k}(\alpha \nabla_k)\right) \leq g\left(z\right) - \rho_1\alpha\|\nabla_k\|_2^2$}
            \STATE {\tt ArmijoFlag}=True
        \ENDIF
        \IF{$g^{\prime}\left(\exp_{z_k}(\alpha^\star v)\right) \geq -\rho_2\|\nabla_k\|_2^2$}
            \STATE {\tt WolfeFlag}=True
        \ENDIF
    \ENDWHILE
    \STATE $z_{k+1} \gets \operatorname{exp}_{z_k}\left(-\alpha\nabla_k\right)$
\ENDFOR
\end{algorithmic}
\textbf{Return} $x_{K} = z_{K}\odot z_{K}$
\end{algorithm}

\begin{algorithm}
\caption{HadRGD-BB for \eqref{C_1}}
\label{alg:HadRGD-BB}
\textbf{Input}: $x_0 \in \Delta_n$: initial point, $\alpha_{\mathrm{def}}$: default step size, $\delta$: decay factor, $\eta$: moving average factor, $\rho_1$: tolerance factor, $K$: number of iterations, $f$: original objective function
\begin{algorithmic}[1] 
\STATE $z_0 = \sqrt{x_0}$ \COMMENT{(Defined componentwise)}
\STATE $g(z) := f(z\odot z)$
\STATE $\alpha \gets \alpha_{\mathrm{def}}$
\FOR{$k=1,\ldots,K$}
    \STATE $\nabla_k \gets \operatorname{grad} g(z_k)$
    \WHILE{$g(\operatorname{exp}_{z_k}(-\alpha\nabla_k) \geq C_k - \rho_1\alpha\|\nabla_k\|_2^2$}
        \STATE $\alpha \gets \alpha\delta$
    \ENDWHILE
    \STATE $z_{k+1} \gets \operatorname{exp}_{z_k}(-\alpha\nabla_k)$
    \STATE $Q_{k+1} \gets \eta Q_k + 1$
    \STATE $C_{k+1} \gets \left(\eta Q_kC_k + g(z_{k+1})\right)/Q_{k+1}$
    \STATE $s_{k+1} \gets z_{k+1} - z_k$
    \STATE $y_{k+1} \gets \operatorname{grad}g(z_{k+1}) - \operatorname{grad}g(z_k)$
    \STATE $\alpha^{\mathrm{BB}}_{k+1} \gets \frac{\|s_{k+1}\|_2^2}{\left|\langle s_{k+1},y_{k+1}\rangle\right|}$
    \STATE $\alpha \gets \max\left\{\min\left\{\alpha^{\mathrm{BB}}_{k+1},30 \right\}10^{-10}\right\}$
\ENDFOR
\end{algorithmic}
\textbf{Return} $x_{K} = z_{K}\odot z_{K}$
\end{algorithm}

\begin{algorithm}
\caption{PGD with line search for \eqref{C_1}}
\label{alg:PGDL}
\textbf{Input}: $x_0 \in \Delta_n$: initial point, $P_{\Delta_n}(\cdot)$: A callable projection algorithm, $s$: step size, $\beta$: decay factor, $\rho_1$: Armijo condition tolerance $K$: number of iterations, $f$: original objective function
\begin{algorithmic}[1] 
\FOR{$k=1,\ldots,K$}
    \STATE $\nabla_k \gets \nabla f(x_k)$
    \STATE $\bar{x} \gets P_{\Delta_n}\left(x_k - s\nabla_k\right)$
    \STATE $m=0$
    \STATE {\tt ArmijoFlag}= False
    \WHILE{{\tt ArmijoFlag}= False and $m \leq 25$}
        \STATE $\alpha \gets \beta^m$
        \STATE $x_{\mathrm{new}} \gets x_k + \alpha\left(\bar{x} -x_k\right)$
        \IF{$f(x_k) - f(x_{\mathrm{new}}) \geq -\rho_1\langle \nabla_k,\bar{x} - x_k\rangle$}
            \STATE {\tt ArmijoFlag}= True
        \ENDIF
    \ENDWHILE
    \STATE $x_{k+1} \gets x_{\mathrm{new}}$
\ENDFOR
\end{algorithmic}
\textbf{Return} $x_{K} = z_{K}\odot z_{K}$
\end{algorithm}
\newpage

\section{Proofs of Landscape Analysis}
\label{sec:landscape:proof}

\subsection{The Unit Simplex --- Proof of Theorem \ref{thm:H2}}
Recall the original problem is
\begin{equation}
    x^{\star} \in \argmin_{x \in \text{\ding{115}}_n} f(x)
    \tag{$\mathrm{C_2}$}
\end{equation}
and the Hadamard parameterized problem is
\begin{equation}
    z^{\star} \in \argmin_{z \in \mathcal{B}_n^2}  f(z\odot z)
    \tag{$\mathrm{NC_2}$}
\end{equation}

\paragraph{First-order KKT conditions of Problem~\eqref{C_2}} 
The Lagrangian of Problem~\eqref{C_2} is 
\begin{align}
     \mathcal{L}_C(x,\lambda,\beta) = f(x) - \lambda (1_n^\top x-1) +\beta^\top x 
\end{align}
The first-order conditions  of Problem~\eqref{C_2} are given by: there exist $\lambda^\star\in\mathbb{R}$ and $\beta^\star\in\mathbb{R}^n$ such that 
\begin{subequations}
\begin{align}
\nabla f(x^\star) &= \lambda^\star 1 + \beta^\star
 \label{KKT:C2:1}
 \\
 x^\star& \ge 0
  \label{KKT:C2:2}
 \\
 \beta^\star &\ge 0
  \label{KKT:C2:3}
 \\
 x^\star\odot \beta^\star &=0
  \label{KKT:C2:4}
 \\
 1_n^\top x^\star &\le1
  \label{KKT:C2:5}
 \\
 \lambda ^\star &\le 0
  \label{KKT:C2:6}
 \\
 (1_n^\top x^\star-1)\lambda^\star & = 0
  \label{KKT:C2:7}
\end{align}
 \label{KKT:C2}
\end{subequations}

\paragraph{Second-order KKT conditions of Problem~\eqref{C_2}} 
As 
\[
\nabla^2 f(x) =\nabla^2_x \mathcal{L}_C(x,\lambda,\beta)
\]
the second-order KKT conditions of Problem \eqref{C_2} are given by \eqref{KKT:C2} and 
\begin{equation}
    u^\top\nabla^2f(x^\star)u \geq 0 
    \label{eq:KKT:C2:2}
\end{equation}
for all $u$ such that 
\[
\begin{cases}
1_n^\top u = 0 &\text{if }1_n^\top x^\star=1\\
[u]_k = 0  &\text{if }[x^\star]_k = 0
\end{cases}
\]

\paragraph{First-order KKT conditions of Problem~\eqref{NC_2}}
The Lagrangian of Problem~\eqref{NC_2} is
\begin{align}
    \mathcal{L}_N(z,\lambda_N) = f(z\odot z) - \lambda_N (\|z\|_2^2-1)
\end{align}
Then the first-order KKT conditions of Problem~\eqref{NC_2} are given by that there exist $\lambda_N^\star\in\mathbb{R}$ such that 
\begin{subequations}
\begin{align}
\nabla f(z^\star\odot z^\star)\odot z^\star &= \lambda_N^\star z^\star
\label{KKT:NC2:1}
 \\
\|z^\star\|^2 &\le1
\label{KKT:NC2:2}
\\
\lambda_N^\star &\le 0
\label{KKT:NC2:3}
\\
(\|z^\star\|^2-1))\lambda_N^\star &=0
\label{KKT:NC2:4}
\end{align}
\label{KKT:NC2}
\end{subequations}

\paragraph{Second-order KKT conditions of Problem~\eqref{NC_2}}
The second-order KKT conditions of Problem~\eqref{NC_2} are given by \eqref{KKT:NC2} and that there exists $\lambda_N^\star\in\mathbb{R}$ such that 
\begin{equation}
\begin{aligned}
d^\top\left[\nabla^2 \mathcal{L}_N(z^\star,\lambda^\star_N)\right] d &=
d^\top\left[ 2\mathrm{diag}(\nabla f(z^\star\odot z^\star)) + 4 \mathrm{diag}(z^\star) \nabla^2 f(z^\star\odot z^\star) \mathrm{diag}(z^\star) - 2\lambda_N^\star I_n \right] d 
\nonumber\\
& =
 4(z^\star\odot d)^\top \nabla^2 f(z^\star\odot z^\star)(z^\star\odot d)+2\langle\nabla f(z^\star\odot z^\star), d\odot d\rangle - 2\lambda_N^\star \|d\|^2 \ge 0  
\end{aligned}
\label{KKT2:NC2}
\end{equation}
for all $d$ such that
\[
\begin{cases}
d\in\R^n & \text{if }\|z^\star\|<1\\
d\bot z^\star & \text{if }\|z^\star\|=1
\end{cases}
\]


\begin{proof}[\bf Proof of Theorem \ref{thm:H2}] ~

\begin{enumerate} 
   \item Suppose $x^\star$ is a second-order KKT point of \eqref{C_2}.
   Take any $z^\star$ such that $z^\star\odot z^\star = x^\star$.
   Firstly, by multiplying $z^\star$ on both sides of  \eqref{KKT:C2:1} and choosing $\lambda_N^\star=\lambda^\star$, we recover  the first-order KKT condition \eqref{KKT:NC2:1}. Secondly, by plugging  $x^\star=z^\star\odot z^\star$ and $1_n^\top x^\star=\|z^\star\|^2$ to  \eqref{KKT:C2:5}--\eqref{KKT:C2:7} and choosing $\lambda_N^\star=\lambda^\star$, we recover the first-order KKT conditions \eqref{KKT:NC2:2}--\eqref{KKT:NC2:4}.
   
   Hence, it remains to show the second-order KKT conditions \eqref{KKT2:NC2} with $\lambda_N^\star=\lambda^\star$. 
    Since $x^\star$ satisfies the first-order KKT conditions \eqref{KKT:C2} and  $x^\star=z^\star\odot z^\star$, we get
    \begin{align}
       & \nabla f(x^\star) = \lambda^\star 1_n + \beta^\star ,\beta\ge 0
      \nonumber \\
       \iff & \mathrm{diag}(\nabla f(z^\star\odot z^\star)) - \lambda^\star I_n = \mathrm{diag}(\beta^\star)\succeq 0
       \nonumber\\
      \iff & \langle \nabla f(z^\star\odot z^\star), d\odot d\rangle - \lambda^\star \|d\|^2 \ge 0,\ \forall d
      \label{nabla f - lam}
    \end{align}
    Using \eqref{nabla f - lam}, we have
    \begin{align}
    d^\top[\nabla^2 \mathcal{L}_N(z^\star,\lambda^\star)]d
     &= 4(z^\star\odot d)^\top \nabla^2 f(z^\star\odot z^\star)(z^\star\odot d)+2\langle\nabla f(z^\star\odot z^\star), d\odot d\rangle - 2\lambda^\star \|d\|^2
     \nonumber\\
     &\ge 4(z^\star\odot d)^\top \nabla^2 f(z^\star\odot z^\star)(z^\star\odot d)
    \nonumber\\
     & \ge 0, \begin{cases}\text{for all }
d\in\R^n & \text{if }\|z^\star\|<1\\
\text{for all }d\bot z^\star & \text{if }\|z^\star\|=1
\end{cases}
    \label{H2:PSD}
    \end{align}
    Then, as long as \eqref{H2:PSD} holds, $z^\star$ is a second-order KKT point of Problem~\eqref{NC_2}.    
    
\paragraph{Show the first line of \eqref{H2:PSD}} First, if $\|z^\star\|<1$, we have $1_n^\top (z^\star\odot z^\star) < 1$. Second, for all $d\in\R^n$, we have $[z^\star\odot d]_i=0$ if $[z^\star]_i^2=0$. Then the first line of \eqref{H2:PSD} follows from \eqref{eq:KKT:C2:2}.
    
\paragraph{Show the second line of \eqref{H2:PSD}}  First, if $\|z^\star\|=1$, we have $1_n^\top (z^\star\odot z^\star)=1$. Also, $d\bot z^\star$ implies $ \langle 1_n,z^\star\odot d\rangle = \langle d, z^\star\rangle  =0$.  Second, it holds that $[z^\star\odot d]_i=0$ if $[z^\star]_i^2=0$. Then the second line of \eqref{H2:PSD} also follows from \eqref{eq:KKT:C2:2}.

   \item Suppose $z^\star$ is a second-order KKT point of Problem~\eqref{NC_2}. First,
    $z^\star$ satisfies \eqref{KKT:NC2}: there exists some $\lambda^\star\in\mathbb{R}$ such that
   \begin{align*}
 \nabla f(z^\star\odot z^\star)\odot z^\star &= \lambda_N^\star z^\star
 \\
\|z^\star\|^2 &\le1
\\
\lambda^\star &\le 0
\\
(\|z^\star\|^2-1))\lambda_N^\star &=0
   \end{align*}
 This implies that
   \begin{align*}
[\nabla f(z^\star\odot z^\star)]_k &= \lambda_N^\star,\ \text{for all }k: [z^\star]_k \neq 0
 \\
z^\star\odot z^\star &\ge 0
 \\
\|z^\star\|^2 &\le1
\\
\lambda_N^\star &\le 0
\\
(1_n^\top(z^\star\odot z^\star)-1))\lambda_N^\star &=0
   \end{align*}
 On the other hands, we  recognize that the following equations in \eqref{KKT:C2}:
   \begin{align*}
\nabla f(x^\star) &= \lambda^\star 1_n + \beta^\star
 \\
 \beta^\star &\ge 0
 \\
 x^\star\odot \beta^\star &=0
   \end{align*}
   are equivalent to 
   \begin{align*}
       \begin{cases}
         [\nabla f(x^\star)]_k  = \lambda^\star,\ \mathrm{for\ all\ } k: [x^\star]_k \neq0
         \\
          [\nabla f(x^\star)]_k \ge \lambda^\star,\ \mathrm{for\ all\ } k:  [x^\star]_k =0
       \end{cases}
   \end{align*}  
   Therefore, by choosing $\lambda^\star=\lambda_N^\star$, to show $z^\star\odot z^\star$
   is a first-order KKT point of \eqref{C_2}, it remains to show
      \begin{align}
          [\nabla f(z^\star\odot z^\star)]_k \ge \lambda_N^\star,\ \mathrm{for\ all\ } k:  [z^\star]_k =0
          \label{eqn:nabla f > lam}
   \end{align} 
   \paragraph{Case 1: $\{k: [z^\star]_k=0\}=\emptyset$} In this case, \eqref{eqn:nabla f > lam} holds automatically. Therefore, $z^\star\odot z^\star$ is  a first-order KKT point of \eqref{C_2}. 
   
    \paragraph{Case 2: $\{k: [z^\star]_k=0\}\neq\emptyset$}  
   Since $z^\star$ is a second-order KKT point of Problem~\eqref{NC_2}, 
       \begin{align}
    d^\top[\nabla^2 \mathcal{L}_N(z^\star,\lambda_N^\star)]d
     &= 4(z^\star\odot d)^\top \nabla^2 f(z^\star\odot z^\star)(z^\star\odot d)+2\langle\nabla f(z^\star\odot z^\star), d\odot d\rangle - 2\lambda_N^\star \|d\|^2
    \nonumber\\
     & \ge 0, \begin{cases}\text{for all }
d\in\R^n & \text{if }\|z^\star\|<1\\
\text{for all }d\bot z^\star & \text{if }\|z^\star\|=1
\end{cases}
    \label{H2:PSD:2}
    \end{align}
For any $k^\star \in\{k: [z^\star]_k =0\}$, construct $d=e_{k^\star}$, where $\{e_i\}$ denotes the standard basis in $\mathbb{R}^n$, and we have $z^\star\odot d=[z^\star]_k^2 e_k=0$ (and also $d\bot z^\star$).
Then, in either case of \eqref{H2:PSD:2}, such a construction $d=e_{k^\star}$ gives 
\[
          [\nabla f(z^\star\odot z^\star]_{k^\star} \ge \lambda_N^\star
\]
which then implies  \eqref{eqn:nabla f > lam}.
Therefore, $z^\star\odot z^\star$ is  a first-order KKT point of \eqref{C_2}.

Finally,  since $z^\star$ is a second-order KKT point of Problem~\eqref{NC_2} and 
\begin{align*}
[\nabla f(z^\star\odot z^\star)]_k &= \lambda_N^\star,\ \mathrm{for\ all\ } k:  [z^\star]_k \neq0
 \end{align*}   
we can further simplify \eqref{H2:PSD:2} as
\begin{align*}
    d^\top[\nabla^2 \mathcal{L}_N(z^\star,\lambda_N^\star)]d
     &= 4(z^\star\odot d)^\top \nabla^2 f(z^\star\odot z^\star)(z^\star\odot d)+2\langle\nabla f(z^\star\odot z^\star), d\odot d\rangle - 2\lambda_N^\star \|d\|^2
    \nonumber\\
     &= 4(z^\star\odot d)^\top \nabla^2 f(z^\star\odot z^\star)(z^\star\odot d)+2\sum_{k:[z^\star]_k=0} ([\nabla f(z^\star\odot z^\star)]_k-\lambda_N^\star) d_k^2
    \nonumber\\
     & \ge 0, 
     \begin{cases}\text{for all }
d\in\R^n & \text{if }\|z^\star\|<1\\
\text{for all }d\bot z^\star & \text{if }\|z^\star\|=1
\end{cases}
\end{align*}
By constructing 
\[
d_k=\begin{cases}
[u]_k:=0 &\text{if }[z^\star]_k=0
\\
\frac{[u]_k}{[z^\star]_k} &\text{if }[z^\star]_k\neq 0
\end{cases}
\]
and noting that $d\bot z^\star \iff 1^\top_n u=0$ and $\|z^\star\|=1 \iff 1_n^\top (z^\star\odot z^\star)=1$, we further have
\begin{align*}
 d^\top[\nabla^2 \mathcal{L}_N(z^\star,\lambda_N^\star)]d
 =4 u^\top \nabla^2 f(z^\star\odot z^\star)u
      \ge 0, \text{ for all $u$ such that}
     \begin{cases}
[u]_k=0 & \text{if }[z^\star]_k=0\\
1_n^\top u=0 & \text{if }1_n^\top (z^\star\odot z^\star)=1
\end{cases}
\end{align*}
Therefore, in view of \eqref{eq:KKT:C2:2}, $z^\star\odot z^\star$ is a second-order KKT point of \eqref{C_2}.

\end{enumerate}

\end{proof}

\subsection{The weighted probability simplex --- Proof of Theorem \ref{thm:H3}} 

Recall the original problem is
\begin{equation}
    x^{\star} \in \argmin_{x \in \Delta_{n}^a} f(x)
    \tag{$\mathrm{C_3}$}
\end{equation}
and the Hadamard parameterized problem is
\begin{equation}
    z^{\star} \in \argmin_{z \in \mathcal{B}_n^a}  f(z\odot z)
    \tag{$\mathrm{NC_3}$}
\end{equation}

\paragraph{First-order KKT conditions of Problem~\eqref{C_3}} 
The Lagrangian of Problem~\eqref{C_3} is 
\begin{align}
     \mathcal{L}_C(x,\lambda,\beta) = f(x) - \lambda (a^\top x-1) +\beta^\top x 
\end{align}
The first-order KKT conditions are given by there exist $\lambda^\star\in\mathbb{R}$ and $\beta^\star\in\mathbb{R}^n$ such that
\begin{subequations}
\begin{align}
\nabla f(x^\star) &= \lambda^\star a + \beta^\star \label{KKT:C3:1}
 \\
 x^\star& \ge 0
 \label{KKT:C3:2}\\
 \beta^\star &\ge 0 \label{KKT:C3:3}
 \\
 x^\star\odot \beta^\star &=0 \label{KKT:C3:4}
 \\
 a^\top x^\star &=1 \label{KKT:C3:5}
\end{align}
\label{KKT:C3}
\end{subequations}

\paragraph{Second-order KKT conditions of Problem~\eqref{C_3}} 
As 
\[
\nabla^2 f(x) =\nabla^2_x \mathcal{L}_C(x,\lambda,\beta)
\]
the second-order KKT conditions of Problem \eqref{C_3} are given by \eqref{KKT:C3} and 
\begin{equation}
    u^\top\nabla^2f(x^\star)u \geq 0 
    \label{eq:KKT:C3:2}
\end{equation}
for all $u$ satisfying 
\[
\begin{cases}
a^\top u = 0 & 
\\
[u]_k = 0  &\text{if }[x^\star]_k = 0
\end{cases}
\]

\paragraph{First-order KKT conditions of Problem~\eqref{NC_3}}
The Lagrangian of Problem~\eqref{NC_3} is
\begin{align}
    \mathcal{L}_N(z,\lambda_N) = f(z\odot z) - \lambda_N (\|z\|_{\mathrm{diag}(a)}^2-1)
\end{align}
Then the first-order KKT conditions of Problem~\eqref{NC_3} are given by that there exist $\lambda^\star\in\mathbb{R}$ such that 
\begin{subequations}
\begin{align}
\nabla f(z^\star\odot z^\star)\odot z^\star &= \lambda_N^\star a\odot z^\star \label{KKT:NC3:1}
 \\
\|z^\star\|^2_{\mathrm{diag}(a)} &=1 \label{KKT:NC3:2}
\end{align}
\label{KKT:NC3}
\end{subequations}

\paragraph{Second-order KKT conditions of Problem~\eqref{NC_3}}
The second-order KKT conditions of Problem~\eqref{NC_3} are given by that there exist $\lambda_N^\star\in\mathbb{R}$ such that $(z^\star,\lambda_N^\star)$ satisfy \eqref{KKT:NC3} and 
\begin{align}
d^{\top}\left[\nabla^2 \mathcal{L}_N(z^\star,\lambda_N^\star)\right]d&=
d^{\top}\left[2\mathrm{diag}(\nabla f(z^\star\odot z^\star)) + 4 \mathrm{diag}(z^\star) \nabla^2 f(z^\star\odot z^\star) \mathrm{diag}(z^\star) - 2\lambda_N^\star \mathrm{diag}(a) \right]d
\nonumber\\
&= 4(z^\star\odot d)^\top \nabla^2 f(z^\star\odot z^\star)(z^\star\odot d) + 2\langle\nabla f(z^\star\odot z^\star), d\odot d\rangle  - 2\lambda_N^\star \|d\|_{\mathrm{diag}(a)}^2 \ge 0
 \label{KKT2:NC3}
\end{align}
for all $d$ such that
\[
d^\top \diag(a) z^\star =0
\]

\begin{proof}[\bf Proof of Theorem \ref{thm:H3}]
~

\begin{enumerate} 
 \item Suppose that $x^\star$ is a second-order KKT point of \eqref{C_3}. 
 Take any $z^\star$ from $\{z: z\odot z = x^\star\}$.  We first show $z^\star$ satisfies the first-order KKT conditions \eqref{KKT:NC3:1} and \eqref{KKT:NC3:2}. Multiplying both sides of the optimality condition \eqref{KKT:C3:1} by $z^\star$:
    \begin{align}
        & \nabla f(x^{\star})\odot z^{\star} = \lambda^{\star}a\odot z^{\star} + \beta^{\star}\odot z^{\star} \\
    \Longrightarrow &  f(z^{\star}\odot z^{\star})\odot z^{\star} = \lambda^{\star} a\odot z^{\star} + \beta^{\star}\odot z^{\star} \quad   \label{eq:Reduce_to_KKT1_NVX:W}
    \end{align}
    By complementary slackness \eqref{KKT:C3:4}: $z^\star\odot z^\star\odot\beta^\star=0$, and so $z^\star\odot\beta^\star=0$ by \ref{H2}, thus \eqref{eq:Reduce_to_KKT1_NVX:W} reduces to \eqref{KKT:NC3:1} by choosing $\lambda_N^\star=\lambda^\star$. Also note \eqref{KKT:C3:5} is equivalent to \eqref{KKT:NC3:2}: 
    \begin{equation*}
        1 = a^{\top}x^{\star} = a^{\top}z^{\star}\odot z^{\star} =  \|z^{\star}\|_{\mathrm{diag}(a)}^{2}
    \end{equation*}
    It remains to show $z^\star\in \mathcal{Z}^\star$ satisfies the second-order KKT conditions \eqref{KKT2:NC3} of Problem~\eqref{NC_3} with $\lambda_N^\star=\lambda^\star$.
    Since $x^\star$ satisfies \eqref{KKT:C3:1} and \eqref{KKT:C3:3}:
    \begin{align*}
       & \nabla f(x^\star) = \lambda^\star a + \beta^\star ,\beta^{\star}\ge 0
       \\
       \iff & \mathrm{diag}(\nabla f(z^\star\odot z^\star)) - \lambda^\star \mathrm{diag}(a) = \mathrm{diag}(\beta^\star)\succeq 0
       \\
       \iff & d^{\top}\left[\mathrm{diag}(\nabla f(z^\star\odot z^\star)) - \lambda^\star \mathrm{diag}(a)\right]d \geq 0 \\
      \overset{\text{\ref{H3}}}{\iff} & \langle \nabla f(z^\star\odot z^\star), d\odot d\rangle - \lambda^\star \|d\|_{\mathrm{diag}(a)}^2 \ge 0,\ \forall d  
    \end{align*}
    Plugging this into \eqref{KKT2:NC3} by choosing $\lambda_N^\star=\lambda^\star$, we get 
    \begin{align}
     d^{\top}[\nabla^2 \mathcal{L}_N(z^\star)]d
     &= 4(z^\star\odot d)^\top \nabla^2 f(z^\star\odot z^\star)(z^\star\odot d)+2\langle\nabla f(z^\star\odot z^\star), d\odot d\rangle - 2\lambda^\star \|d\|_{\mathrm{diag}(a)}^2
      \nonumber\\
     &\ge 4(z^\star\odot d)^\top \nabla^2 f(z^\star\odot z^\star)(z^\star\odot d)
    \nonumber \\
     & \ge 0
    \label{H3:PSD}
    \end{align}
    for all $d$ such that
\[
d^\top \diag(a) z^\star =0
\]
     \eqref{H3:PSD} follows from \eqref{eq:KKT:C3:2} by noting that
    \[
    \begin{cases}
    a_n^\top (z^\star\odot d)= d^\top \diag(a) z^\star =0 &
    \\
    [z^\star\odot d]_k = [z^\star]_k [d]_k =0 &\text{if }  [z^\star]_k^2=0
    \end{cases}
    \]
    Therefore, $z^\star$ is a second-order KKT point of \eqref{NC_3}.
    
   \item Suppose  $z^\star$ is second-order KKT points of Problem~\eqref{NC_3}. 
  First, note that $z^{\star}$ satisfies \eqref{KKT:NC3:2}, we have
    \begin{equation*}
       z^\star\odot z^\star \ge 0 \text{ and } a^\top (z^\star\odot z^\star) = 1
    \end{equation*}
    That is, $z\odot z$ satisfies the KKT conditions \eqref{KKT:C3:2} and \eqref{KKT:C3:5} of Problem~\eqref{C_3}. Second, since $z^\star$ satisfies \eqref{KKT:NC3:1}, there exists $\lambda_N^\star\in\mathbb{R}$ such that
   \begin{align*}
       & \nabla f(z^\star\odot z^\star) \odot z^\star = \lambda_N^\star a \odot z^\star
       \\
       \Longrightarrow & [\nabla f(z^\star\odot z^\star)]_k = \lambda_N^\star [a]_k,\ \ \mathrm{for\ all\ } k: [z^\star]_k \neq 0
   \end{align*}
On the other hand, we recognize that \eqref{KKT:C3:1},\eqref{KKT:C3:3} and \eqref{KKT:C3:4} with $\lambda^\star=\lambda^\star_N$ are equivalent to:
   \begin{align*}
       \begin{cases}
         [\nabla f(x^\star)]_{k}  = \lambda^\star_N [a]_k ,\ \ \mathrm{for\ all\ } k: [x^\star]_k \neq0
         \\
          [\nabla f(x^\star)]_{k} \ge \lambda^\star_N [a]_k,\ \ \mathrm{for\ all\ } k:[x^\star]_k =0
       \end{cases}
   \end{align*}
Therefore, it remains to show
   \begin{align}
       [\nabla f(z^\star\odot z^\star)]_k &\ge \lambda_N^\star [a]_k ,\ \mathrm{for\ all\ } k: [z^\star]_k = 0
       \label{eqn:nabla > lam a}
   \end{align}
   Then $z^\star\odot z^\star$ satisfies the KKT conditions \eqref{KKT:C3:1},\eqref{KKT:C3:3} and \eqref{KKT:C3:4} with
   \begin{align*}
    \lambda^\star&=\lambda_N^\star
    \\
    [\beta^{\star}]_k &= [\nabla f(z^\star\odot z^\star)]_k - \lambda^\star [a]_k  \ge 0,\ \forall k
   \end{align*}
   Consequently, $z^{\star}\odot z^{\star}$ is a first-order KKT point of Problem~\eqref{C_3}.
   
 \paragraph{Show \eqref{eqn:nabla > lam a}} This is trivial if $\{k: [z^\star]_k=0\}=\emptyset$. Now suppose $\{k: [z^\star]_k=0\}\neq \emptyset$. Since $z^\star$ is a second-order KKT point of \eqref{NC_3},  \eqref{KKT2:NC3} claims that 
 \begin{align*}
d^{\top}\left[\nabla^2 \mathcal{L}_N(z^\star,\lambda_N^\star)\right]d&=4(z^\star\odot d)^\top \nabla^2 f(z^\star\odot z^\star)(z^\star\odot d)+
 2\langle\nabla f(z^\star\odot z^\star), d\odot d\rangle  - 2\lambda_N^\star \|d\|_{\mathrm{diag}(a)}^2 \ge 0
\end{align*}
for all $d$ such that $d^\top \diag(a) z^\star =0$. Now, for any $k^\star\in\{k: [z^\star]_k=0\}$, choosing
\[
d=e_{k^\star}
\]
which satisfies that  $d^\top \diag(a) z^\star = [a]_{k^\star} [z^\star]_{k^\star} =0$, we have
 \begin{align*}
d^{\top}\left[\nabla^2 \mathcal{L}_N(z^\star,\lambda_N^\star)\right]d&=
 2[\nabla f(z^\star\odot z^\star)]_{k^\star} - 2\lambda_N^\star [a]_{k^\star} \ge 0
\end{align*}
This completes the proof of \eqref{eqn:nabla > lam a}.

Finally, we show $z^\star\odot z^\star$ is a second-order KKT point of \eqref{C_3}. Again, since $z^\star$ is a second-order KKT point of Problem~\eqref{NC_3} and 
\begin{align*}
[\nabla f(z^\star\odot z^\star)]_k &= \lambda_N^\star [a]_k,\ \mathrm{for\ all\ } k:  [z^\star]_k \neq0
 \end{align*}   
we can further simplify \eqref{KKT2:NC3} as
 \begin{align*}
d^{\top}\left[\nabla^2 \mathcal{L}_N(z^\star,\lambda_N^\star)\right]d&=4(z^\star\odot d)^\top \nabla^2 f(z^\star\odot z^\star)(z^\star\odot d)+
 2\sum_{k: [z^\star]_k=0} (\nabla f(z^\star\odot z^\star)-\lambda_N^\star [a]_k) [d]_k^2
 \ge 0
\end{align*}
for all $d$ such that $d^\top \diag(a) z^\star =0$. By constructing 
\[
[d]_k =
\begin{cases}
[u]_k:=0 & \text{if }[z^\star]_k=0
\\
\frac{[u]_k}{[z^\star]_k} & \text{if }[z^\star]_k\neq0
\end{cases}
\Longrightarrow
d^\top \diag(a) z^\star = a^\top u =0
\]
we have 
 \begin{align*}
d^{\top}\left[\nabla^2 \mathcal{L}_N(z^\star,\lambda_N^\star)\right]d&=4 u^\top \nabla^2 f(z^\star\odot z^\star) u
 \ge 0 
\end{align*}
for all $u$ satisfying
\[
\begin{cases}
a^\top u = 0 &
\\
[u]_k = 0 & \text{if } [z^\star\odot z^\star]_k=0
\end{cases}
\]
This completes the proof of showing $z^\star\odot z^\star$ is a second-order KKT point of \eqref{C_3}.
   
\end{enumerate}  
\end{proof}

\subsection{The \texorpdfstring{$\ell_1$}{} norm ball --- Proof of Theorem \ref{thm:H4}} 
Recall the original problem is 
\begin{equation}
    x^{\star} \in \argmin_{x\in\mathcal{B}^{1}_n}f(x)
    \label{C_4}
    \tag{$\mathrm{C_4}$}
\end{equation}
and the Hadamard parameterized problem is
\begin{equation}
 (z_u^\star,z_v^\star) \in  \argmin_{(z_u,z_v)\in \mathcal{B}^{2}_{2n}}f(z_u\odot z_u - z_v\odot z_v)
    \tag{$\mathrm{NC_4}$}
    \label{NC_4}
\end{equation}

\paragraph{First-order KKT conditions of Problem~\eqref{C_4}} The Lagrangian of Problem~\eqref{C_4} is
\begin{equation}
    \mathcal{L}_C(x,\lambda) = f(x) - \lambda\left(\|x\|_1 - 1\right).
\end{equation}
Since Problem~\eqref{C_4} is convex,  the global optimality conditions are given by the first-order KKT conditions that there exists $\lambda^{\star} \in \bbR$ such that:
\begin{subequations}
\begin{align}
 \nabla f(x^{\star}) & \in \lambda^{\star}\sign(x^\star) \label{KKT:C4:1} \\
 \lambda^{\star} & \leq 0 \label{KKT:C4:2} \\
 \|x^{\star}\|_1 & \leq 1 \label{KKT:C4:3} \\
 \lambda^{\star}\left(\|x^{\star}\|_1 - 1\right) &= 0 \label{KKT:C4:4}
\end{align}
 \label{KKT:C4}
\end{subequations}
where 
\[
[\sign(x)]_k=\begin{cases}
 1 & \mathrm{if}~ [x]_k>0
 \\
 -1&\mathrm{if}~ [x]_k<0
 \\
 [-1,1] &\mathrm{if}~ [x]_k=0
\end{cases}\]  

\paragraph{First-order KKT conditions of Problem~\eqref{NC_4}} The Lagrangian of Problem~\eqref{NC_4} is
\begin{equation}
    \mathcal{L}_N(z_u,z_v,\lambda_N) = f(z_u\odot z_u - z_v\odot z_v) - \lambda_N\left(\|z_u\|_2^{2} + \|z_v\|_2^2 - 1\right)
\end{equation}
Thus we derive the first-order KKT conditions:
\begin{subequations}
\begin{align}
    \nabla f(\tilde{x})\odot z_u^{\star} &= \lambda_N^{\star}z_u^{\star} \label{KKT:NC4:1} \\
     -\nabla f(\tilde{x})\odot z_v^{\star} &= \lambda_N^{\star}z_v^{\star} \label{KKT:NC4:2} \\
     \|z_u^{\star}\|_2^{2} + \|z_v^{\star}\|_{2}^{2} & \leq 1 \label{KKT:NC4:3} \\
     \lambda_N^{\star} & \leq 0 \label{KKT:NC4:4} \\
     \lambda_N^{\star}\left(\|z_u^{\star}\|_2^{2} + \|z_v^{\star}\|_2^2 - 1\right) &= 0 \label{KKT:NC4:5}
\end{align}
\label{KKT:NC4}
\end{subequations}
where we denote $\tilde{x}:=z_u^\star\odot z_u^\star-z_v^\star\odot z_v^\star$ to simplify notations.

\paragraph{Second-order KKT conditions of Problem~\eqref{NC_4}} First of all, let us compute the Hessian of $\mathcal{L}_N$:
\begin{dmath}
   \nabla^2\mathcal{L}_N(z_u^\star,z_v^\star,\lambda_N^\star) = 2
  \begin{bmatrix}
  \diag(\nabla f(\tilde{x}))-\lambda_N^\star I_{n} &
  \\
  & -\diag(\nabla f(\tilde{x}))-\lambda_N^\star I_{n}
  \end{bmatrix} 
   + 4
\begin{bmatrix}
\diag(z_u^\star) 
\\
 -\diag(z_v^\star)
\end{bmatrix} 
 \nabla^2 f(\tilde{x})
\begin{bmatrix}
\diag(z_u^\star) 
\\
 -\diag(z_v^\star)
\end{bmatrix}^\top 
\label{eqn:Hessian:L1}
\end{dmath}
The second-order KKT conditions is given by 
\begin{equation}
2
  \begin{bmatrix}
  \diag(\nabla f(\tilde{x}))-\lambda_N^\star I_{n} &
  \\
  & -\diag(\nabla f(\tilde{x}))-\lambda_N^\star I_{n}
  \end{bmatrix} 
   + 4
\begin{bmatrix}
\diag(z_u^\star) 
\\
 -\diag(z_v^\star)
\end{bmatrix} 
 \nabla^2 f(\tilde{x})
\begin{bmatrix}
\diag(z_u^\star) 
\\
 -\diag(z_v^\star)
\end{bmatrix}^\top \succeq 0
\label{KKT2:NC4}
\end{equation}

\begin{proof}[\bf Proof of Theorem \ref{thm:H4}]
Denote 
\[\mathcal{Z}^\star=\left\{(z_u^\star,z_v^\star):  z_u^\star\odot z_u^\star-z_v^\star\odot z_v^\star = x^\star, \lambda^\star z_u^\star\odot z_v^\star=0 \right\}\]
where $x^\star$ is any first-order KKT point of \eqref{C_4} and $\lambda^\star$ is the Lagrangian dual variable in \eqref{KKT:C4:2}.
As usual, we break the proof two  parts.
\begin{enumerate} 
    \item  Suppose that $x^\star$ is a second-order KKT point of \eqref{C_4}. Take any $(z_u^\star,z_v^\star)\in \mathcal{Z}^\star$. We first show $(z_u^\star,z_v^\star) \in \sZ^{\star}$ satisfies the first-order KKT conditions \eqref{KKT:NC4:1}--\eqref{KKT:NC4:5} for Problem~\eqref{NC_4}.
    First of all, since $(z_u^\star,z_v^\star)\in\mathcal{Z}^\star$, we have $\tilde{x}=x^\star$. Then the first-order  KKT conditions \eqref{KKT:NC4:3}--\eqref{KKT:NC4:5} directly follow from  \eqref{KKT:C4:2}--\eqref{KKT:C4:4} by choosing $\lambda_N^\star=\lambda^\star$.
    It suffices to show the first-order KKT conditions \eqref{KKT:NC4:1}--\eqref{KKT:NC4:2} with $\lambda_N^\star=\lambda^\star$.
    Rewriting \eqref{KKT:C4:1} we obtain:
    \begin{equation}
    \begin{aligned}
        \nabla [f(x^{\star})]_i = \lambda^{\star} & \quad \text{ if } [x^{\star}]_i > 0 \\
        \nabla [f(x^{\star})]_i = -\lambda^{\star} & \quad \text{ if } [x^{\star}]_i < 0 \\
        \nabla [f(x^{\star})]_i \in [\lambda^{\star}-\lambda^{\star}] & \quad \text{ if } [x^{\star}]_i = 0 \\
    \end{aligned}
    \label{KKT:C4:NEW}
    \end{equation}

    \begin{description}
        \item[Case 1:] $\lambda^\star=0$.  Then we have $\nabla f(x^\star)=0$, {\em i.e.} $\nabla f(\tilde{x})=0$.  Take $\lambda_N^\star=\lambda^\star=0$. Then it is trivial that \eqref{KKT:NC4:1}--\eqref{KKT:NC4:2} is true.
        \item[Case 2:] $\lambda^\star\neq 0$. Then the above equation \eqref{KKT:C4:NEW}  implies the decoupling property 
        \[u^\star\odot v^\star=0.\]
        Therefore, 
\[
[x^\star]_i=[\tilde{x}]_i=
\begin{cases}
 {[z_u^\star]_i}^2 & \text{if } [z_u^\star]_i\neq 0
 \\
 -{[z_v^\star]_i}^2 & \text{if } [z_v^\star]_i\neq 0 
\end{cases}
\]
        which further indicates that for all $(z_u^\star,z_v^\star) \in \sZ^{\star}$:
    \begin{align*}
        \nabla [f(\tilde{x})]_i = \lambda^{\star} & \quad \text{ if } [\tilde{x}]_i=[x^{\star}]_i > 0 \iff [z_u^\star]_i \neq 0, [z_v^\star]_i=0 \\
        \nabla [f(\tilde{x})]_i = -\lambda^{\star} & \quad \text{ if } [\tilde{x}]_i=[x^{\star}]_i < 0 \iff [z_v^\star]_i \neq 0, [z_u^\star]_i=0 \\
        \nabla [f(\tilde{x})]_i \in [\lambda^{\star},-\lambda^{\star}] & \quad \text{ if } [\tilde{x}]_i=[x^{\star}]_i = 0  \iff [z_u^\star]_i=0, [z_v^\star]_i=0\\
    \end{align*}
Multiplying  $\nabla f(\tilde{x})$ by $z_u^\star$ and $z_v^\star$ respectively, and combining the above three, we then get  the KKT conditions \eqref{KKT:NC4:1} and \eqref{KKT:NC4:2}, respectively. 
    \end{description}
    
    It remains to show $(z_u^\star,z_v^\star) \in \sZ^{\star}$ satisfies the second-order optimality condition \eqref{KKT2:NC4} with $\lambda_N^\star=\lambda^\star$. In this case, we still have $\tilde{x}=x^\star.$
Since $\nabla f(x^\star) \in \lambda^\star \sign(x^\star)$ by \eqref{KKT:C4:1},  we must have $\pm\diag(\nabla f(x^\star))-\lambda^\star I_n \succeq 0$ (recall that $\lambda^\star\le 0$). Therefore, we have 
\[
\pm\diag(\nabla f(\tilde{x}))-\lambda^\star I_n \succeq 0.
\]
Using this,  to show the second-order optimality condition \eqref{KKT2:NC4}:
\[
2
  \begin{bmatrix}
  \diag(\nabla f(\tilde{x}))-\lambda_N^\star I_{n} &
  \\
  & -\diag(\nabla f(\tilde{x}))-\lambda_N^\star I_{n}
  \end{bmatrix} 
   + 4
\begin{bmatrix}
\diag(z_u^\star) 
\\
 -\diag(z_v^\star)
\end{bmatrix} 
 \nabla^2 f(\tilde{x})
\begin{bmatrix}
\diag(z_u^\star) 
\\
 -\diag(z_v^\star)
\end{bmatrix}^\top \succeq 0
\]
it suffices to show that 
\begin{align}
4
\begin{bmatrix}
\diag(z_u^\star) 
\\
 -\diag(z_v^\star)
\end{bmatrix} 
 \nabla^2 f(\tilde{x})
\begin{bmatrix}
\diag(z_u^\star) 
\\
 -\diag(z_v^\star)
\end{bmatrix}^\top \succeq 0
\label{H4:PSD}
\end{align}
which follows from the convexity assumption of $f$.

\item Suppose $(z_u^\star,z_v^\star)$ is second-order KKT points of Problem~\eqref{NC_4}. The target is to show $(z_u^\star,z_v^\star)\in \mathcal{Z}^\star$.  For convenience, we show its contrapositive: If $(z_u^\star,z_v^\star)\notin \mathcal{Z}^\star$, then $(z_u^\star,z_v^\star)$ is not a second-order stationary point of Problem~\eqref{NC_4}. First of all, the combination of \eqref{KKT:NC4:1}--\eqref{KKT:NC4:2} shows that 
\begin{align*}
  \nabla f(\tilde{x})\odot z_u^{\star}\odot z_v^\star & = \lambda_N^\star  z_u^\star\odot z_v^\star
  =- \lambda_N^\star  z_u^\star\odot z_v^\star
\end{align*}
implying that
\begin{equation}
\lambda_N^\star  z_u^\star\odot z_v^\star=0
\label{eqn:lam:uv:0}
\end{equation}

Therefore, together the assumption that $(z_u^\star,z_v^\star)\notin\mathcal{Z}^\star$, we must have 
\begin{equation}
\tilde{x}\neq x^\star
\label{eqn:sub:opt}    
\end{equation}
This is because, otherwise, we can choose $\lambda^\star=\lambda^\star_N$ to certify the global optimality of $\tilde{x})$ in Problem~\eqref{C_4}, {\em i.e.} $(z_u^\star,z_v^\star)\in\mathcal{Z}^\star$, which is a contradiction.

To proceed, we further note that the first-order KKT conditions \eqref{KKT:NC4:3}--\eqref{KKT:NC4:5} directly implies  \eqref{KKT:C4:2}--\eqref{KKT:C4:4} by choosing $\lambda^\star=\lambda_N^\star$. This means that the suboptimality condition \eqref{eqn:sub:opt} implies that 
\begin{equation}
\nabla f(\tilde{x}) \notin \lambda_N^\star\sign(\tilde{x})
\label{eqn:grad:not:sign}
\end{equation}

Also in view of \eqref{KKT:NC4:1} and \eqref{KKT:NC4:2}, we have
\begin{subequations}
\begin{align}
    \nabla f(\tilde{x})\odot z_u^{\star} &= \lambda_N^{\star}z_u^{\star}
    \Longrightarrow  [\nabla f(\tilde{x})]_k=\lambda_N^\star\quad\mathrm{if}~[z_u^\star]_k \neq 0
    \label{KKT:NC4:1:nonzero} \\
     -\nabla f(\tilde{x})\odot z_v^{\star} &= \lambda_N^{\star}z_v^{\star}
      \Longrightarrow  [\nabla f(\tilde{x})]_k=-\lambda_N^\star\quad\mathrm{if}~[z_v^\star]_k \neq 0
     \label{KKT:NC4:2:nonzero}   
\end{align}
\end{subequations}

In the following, we proceed case by case.
\begin{description}
\item[Case 1:]  $\lambda_N^\star=0$. 
Then
\[
[\nabla f(\tilde{x})]_k=0 \quad\text{if either $[z_u^\star]_k\neq0$ or $[z_v^\star]_k\neq 0$}
\]
Note that we cannot have $\nabla f(\tilde{x})=0$, since then  $\lambda^\star=0$ could certify the global optimality of $\tilde{x}$ in Problem~\eqref{C_4}, which contradicts with the fact $(z_u^\star,z_v^\star)\notin \mathcal{Z}^\star$. Therefore, there must be some $k^\star$ such that 
\begin{equation}
[\nabla f(\tilde{x})]_{k^\star}\neq0 \quad\text{for some $[z_u^\star]_{k^\star}=[z_v^\star]_{k^\star}=0$}.
\label{H4:Key:1}
\end{equation}
In this case, plugging in $\lambda_N^\star=0$, we can further simplify the Hessian
\[
   \nabla^2\mathcal{L}_N(z_u^\star,z_v^\star,0) = 2
  \begin{bmatrix}
  \diag(\nabla f(\tilde{x}))  &
  \\
  & -\diag(\nabla f(\tilde{x})) 
  \end{bmatrix} 
   + 4
\begin{bmatrix}
\diag(z_u^\star) 
\\
 -\diag(z_v^\star)
\end{bmatrix} 
 \nabla^2 f(\tilde{x})
\begin{bmatrix}
\diag(z_u^\star) 
\\
 -\diag(z_v^\star)
\end{bmatrix}^\top 
\]
Then we can choose the direction vector as
\[
(d_u,d_v)=
\begin{cases}
(e_k^\star,0)&\text{if }[\nabla f(\tilde{x})]_{k^\star}<0  
\\
(0,e_k^\star)&\text{if }[\nabla f(\tilde{x})]_{k^\star}>0  
\end{cases}
\]
so that we have
\[
\begin{bmatrix}
d_u^\top &  d_v^\top
\end{bmatrix} \nabla^2\mathcal{L}_N(z_u^\star,z_v^\star,0) 
\begin{bmatrix}
d_u \\  d_v
\end{bmatrix}
=-2\left|[\nabla f(\tilde{x})]_{k^\star}\right|\overset{\eqref{H4:Key:1}}{<}0
\]
That being said, $(z_u^\star,z_v^\star)$ is not a second-order stationary point of Problem~\eqref{NC_4}.

\item[Case 2:] $\lambda_N^\star\neq 0$. Then by \eqref{eqn:lam:uv:0}, we must have the decoupling property
\[
z_u^\star\odot z_v^\star=0.
\]
Therefore, 
\[
\tilde{x}_k=
\begin{cases}
 {[z_u^\star]_k}^2 & \text{if } [z_u^\star]_k\neq 0 \Rightarrow [z_v^\star]_k=0
 \\
 -{[z_v^\star]_k}^2 & \text{if } [z_v^\star]_k\neq 0 \Rightarrow [z_u^\star]_k=0
\end{cases}
\]
We claim that there must be some $k^\star$ such that 
\[
[z_u^\star]_{k^\star}=[z_v^\star]_{k^\star}=0
\]
because otherwise $\tilde{x}$ has no zero entries, and the equations \eqref{KKT:NC4:1:nonzero}--\eqref{KKT:NC4:2:nonzero} then implies that
\[
\nabla f(\tilde{x}) = \lambda_N^\star \sign(\tilde{x})  
\]
which contradicts with the fact \eqref{eqn:grad:not:sign}:
\[
\nabla f(\tilde{x}) \notin \lambda_N^\star\sign(\tilde{x})
\]

Thus, we not only have $[z_u^\star]_{k^\star}=[z_v^\star]_{k^\star}=0$ for some $k^\star$, but also
\begin{equation}
\left|[\nabla f(\tilde{x})]_{k^\star}\right|>-\lambda_N^\star>0,
\label{H4:Key:2}
\end{equation}
since otherwise we still have $\nabla f(\tilde{x}) \in \lambda_N^\star\sign(\tilde{x})$.
Then we can choose the direction vector as
\[
(d_u,d_v)=
\begin{cases}
(e_k^\star,0)&\text{if }[\nabla f(\tilde{x})]_{k^\star}<0  
\\
(0,e_k^\star)&\text{if }[\nabla f(\tilde{x})]_{k^\star}>0  
\end{cases}
\]
so that we have
\[
\begin{bmatrix}
d_u^\top &  d_v^\top
\end{bmatrix} \nabla^2\mathcal{L}_N(z_u^\star,z_v^\star,\lambda_N^\star) 
\begin{bmatrix}
d_u \\  d_v
\end{bmatrix}
=-2\left|[\nabla f(\tilde{x})]_{k^\star}\right|-2\lambda_N^\star\overset{\eqref{H4:Key:2}}{<}0
\]
That being said, $(z_u^\star,z_v^\star)$ is not a second-order stationary point of Problem~\eqref{NC_4}.
\end{description}
\end{enumerate}
We now complete the proof of Theorem \ref{thm:H4}.  

\end{proof}

\section{Detailed Experimental Settings}
\label{sec:ExperimentalSettings}

\subsection{Computational Resources}
Our large-scale experiments (results presented in Figures~\ref{fig:LeastSquares_Interior}, \ref{fig:LeastSquares_Boundary} and \ref{fig:LeastSquares_Interior:2}) were run on a workstation with an Intel Core i9-9940X CPU and 128GB of RAM. Our small-scale experiments (results presented in Figures~\ref{fig:PGD_SpeedTest} and \ref{fig:SmallScaleLS}) were run on a 2012 MacBook Pro with an Intel Core i5 CPU and 16 GB of RAM. We estimate that approximately 150 hours of CPU time were used to run all our experiments as well as for prototyping. We did not use any GPU resources.

\subsection{Implementation of Algorithms}
We implemented our proposed algorithms, namely HadRGD, HadRGD-AW and HadRGD-BB, in Python. We used the implementation of Pairwise Frank-Wolfe included in the {\tt copt} package \cite{copt}. We implemented Projected Gradient Descent ourselves and used the algorithm described in \citep{duchi2008efficient} to compute projections to $\Delta_n$ (see Appendix~\ref{sec:SimplexProj}). We tested two line search schemes: Armijo rule along the feasible direction and Armijo rule along the projection arc \cite[Chpt. 3]{bertsekas1997nonlinear}. We found the performance of both schemes to be very similar, so in our experiments we used the feasible direction scheme. We implemented Entropic Mirror Descent (EMDA) exactly as described in Section 5 of \citep{beck2003mirror}, except we used a constant step-size rule instead of decaying step-sizes. 

\subsection{Hyperparameters}
\begin{table}[t]
\def\ROWCOLOR{black!10!white}
\begin{center}
    \begin{tabular}{l| c c  c}
    \toprule
       Algorithm & Hyperparameters & Case (i) & Case (ii) \\
    \midrule
        \multirow{3}{*}{PGD with linesearch} & \cellcolor{\ROWCOLOR} $s$ & \cellcolor{\ROWCOLOR}  20/L & \cellcolor{\ROWCOLOR} 20/L \\
           & $\beta$   & $0.75$  & $0.75$  \\
           & \cellcolor{\ROWCOLOR}$\rho_1$   &\cellcolor{\ROWCOLOR} $10^{-4}$  &\cellcolor{\ROWCOLOR}  $10^{-4}$ \\
        \midrule
        \multirow{4}{*}{HadRGD-AW} & $\alpha_{\mathrm{def}}$ & $10\sqrt{\frac{20n}{L}}$   & $10\sqrt{\frac{2n}{L}}$ \\
           &\cellcolor{\ROWCOLOR} $\beta$   &\cellcolor{\ROWCOLOR} $0.75$  &\cellcolor{\ROWCOLOR} $0.75$   \\
           & $\rho_1$   &  $10^{-4}$ & $10^{-4}$  \\
           &\cellcolor{\ROWCOLOR} $\rho_2$   &\cellcolor{\ROWCOLOR} $0.9$  &\cellcolor{\ROWCOLOR} $0.9$   \\
         \midrule
        \multirow{5}{*}{HadRGD-BB} & $\alpha_{\mathrm{def}}$ & $3.0$ &  $10\sqrt{\frac{2n}{L}}$ \\
           & \cellcolor{\ROWCOLOR}$\beta$  & \cellcolor{\ROWCOLOR} $0.5$ &\cellcolor{\ROWCOLOR} $0.75$  \\
           & $\eta$    &  $0.5$ & $0.5$  \\
           & \cellcolor{\ROWCOLOR}$\rho_1$   &\cellcolor{\ROWCOLOR} $0.1$  &\cellcolor{\ROWCOLOR} $0.1$  \\
    \bottomrule
    \end{tabular}
    \caption{Hyperparameters for benchmarking described in Section~\ref{sec:Experiments}. Case (i) is where $x_{\mathrm{true}}\in\mathrm{int}(\Delta_n)$ (see Figure~\ref{fig:LeastSquares_Interior}). Case (ii) is where $x_{\mathrm{true}}$ is on the boundary of $\Delta_n$ (see Figure~\ref{fig:LeastSquares_Boundary}). $L$ denotes the Lipschitz constant of the objective function $f(x)$.}
    \label{table: Hyperparameters}
\end{center}
\vspace{-0.1in}
\end{table}
For EMDA the only hyperparameter is the step size. As mentioned above, we chose to use a constant step size as empirically we observed this led to faster convergence than a decaying step size rule. For the implementation of PFW we used there are no free hyperparameters. For the experiment where $x_{\mathrm{true}}$ is on the boundary of $\Delta_n$ (see Figure~\ref{fig:LeastSquares_Interior}) we gave PFW the exact Lipschitz constant of $f(x)$, in an attempt to speed it up (we did not do so when $x_{\mathrm{true}} \in \mathrm{int}(\Delta_n)$. \\

As PGD with linesearch and HadRGD-AW employ a backtracking line search, there are several important parameters to set. Arguably the most important is the default step size ($\alpha_{\mathrm{def}}$ in Algorithms \ref{alg:HadRGD-AW}, \ref{alg:HadRGD-BB} and \ref{alg:PGDL}). We chose $\alpha_{\mathrm{def}}$ such that the line search sub-routine within PGD and HadRGD-AW took approximately  ten iterations to find a step size $\alpha_k$ satisfying the Armijo condition (Armijo-Wolfe conditions for HadRGD-AW). The other hyperparameters were set as described in Table~\ref{table: Hyperparameters}. For HadRGD-BB we experimented lightly with hyperparameter tuning, but found that it had little effect on the convergence speed. Thus, we stuck with the somewhat arbitrary values presented in Table~\ref{table: Hyperparameters}.

\section{Additional Experiments}

\label{sec:AdditionalExperiments}
\subsection{Projections to the simplex}
\label{sec:SimplexProj}
As discussed in Section~\ref{sec:Intro}, the convergence speed of PGD on $\Delta_n$ is strongly influenced by the computational complexity of the algorithm used to compute $P_{\Delta_n}$. For completeness, we test four different projection algorithms. Specifically, we test namely the well-known ``sort-then-project'' algorithm (see Figure 1 in \citep{duchi2008efficient}, henceforth referred to as {\tt SortProject})as well as those proposed in \citep{shalev2006efficient}, \citep{duchi2008efficient} and \citep{condat2016fast}, henceforth: {\tt PivotProject}, {\tt DuchiProject} and {\tt CondatProject} respectively. All four algorithms are implemented purely in Python. {\tt DuchiProject} and {\tt CondatProject} are exact implementations of the pseudocode presented in \citep{duchi2008efficient} and \citep{condat2016fast} respectively; {\tt SortProject} is based on the Matlab code provided in \citep{chen2011projection} while {\tt PivotProject} is the algorithm {\tt projection\_simplex\_pivot} available at \url{https://gist.github.com/mblondel/6f3b7aaad90606b98f71}. We tested Projected Gradient Descent (PGD) for \eqref{C_1} using all four algorithms for the underdetermined least squares problem, as described in Section~\ref{sec:Experiments}, for true solution $x_{\mathrm{true}}$ satisfying case (i) and case (ii). We note that although \citep{condat2016fast} provides convincing evidence that {\tt CondatProject} is faster than {\tt DuchiProject} when optimized and written in C, our experimental results suggest that {\tt DuchiProject} is the fastest projection algorithm  for $x_{\mathrm{true}}\in \mathrm{int}(\Delta)$ while {\tt SortProject} is the fastest for $x_{\mathrm{true}}$ on the boundary, when implemented purely in Python. As $x_{\mathrm{true}}\in \mathrm{int}(\Delta)$ is the case of primary interest for us, we use PGD with {\tt DuchiProject} in all our benchmarking experiments.

\begin{figure}[!ht]
    \centering
    \includegraphics[width=0.48\textwidth]{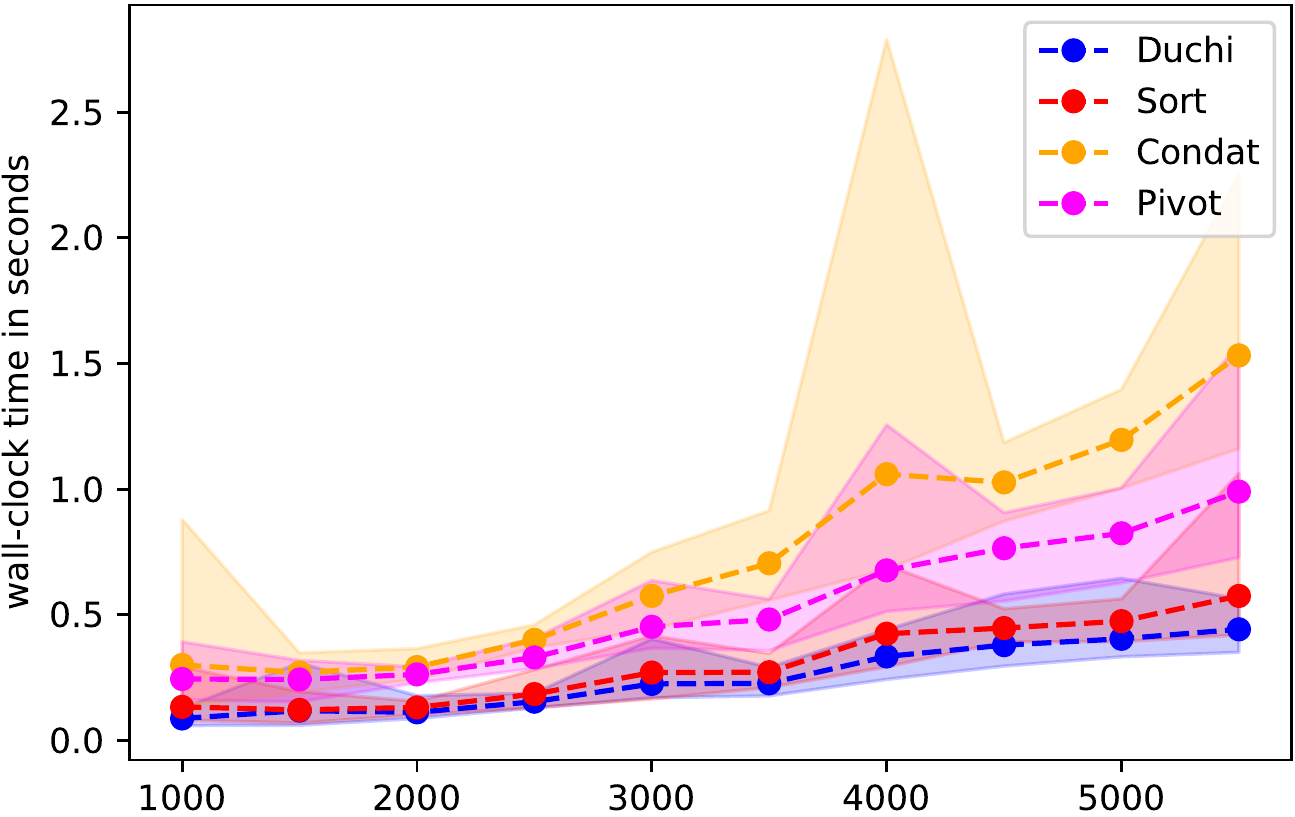}
    ~~~
    \includegraphics[width=0.48\textwidth]{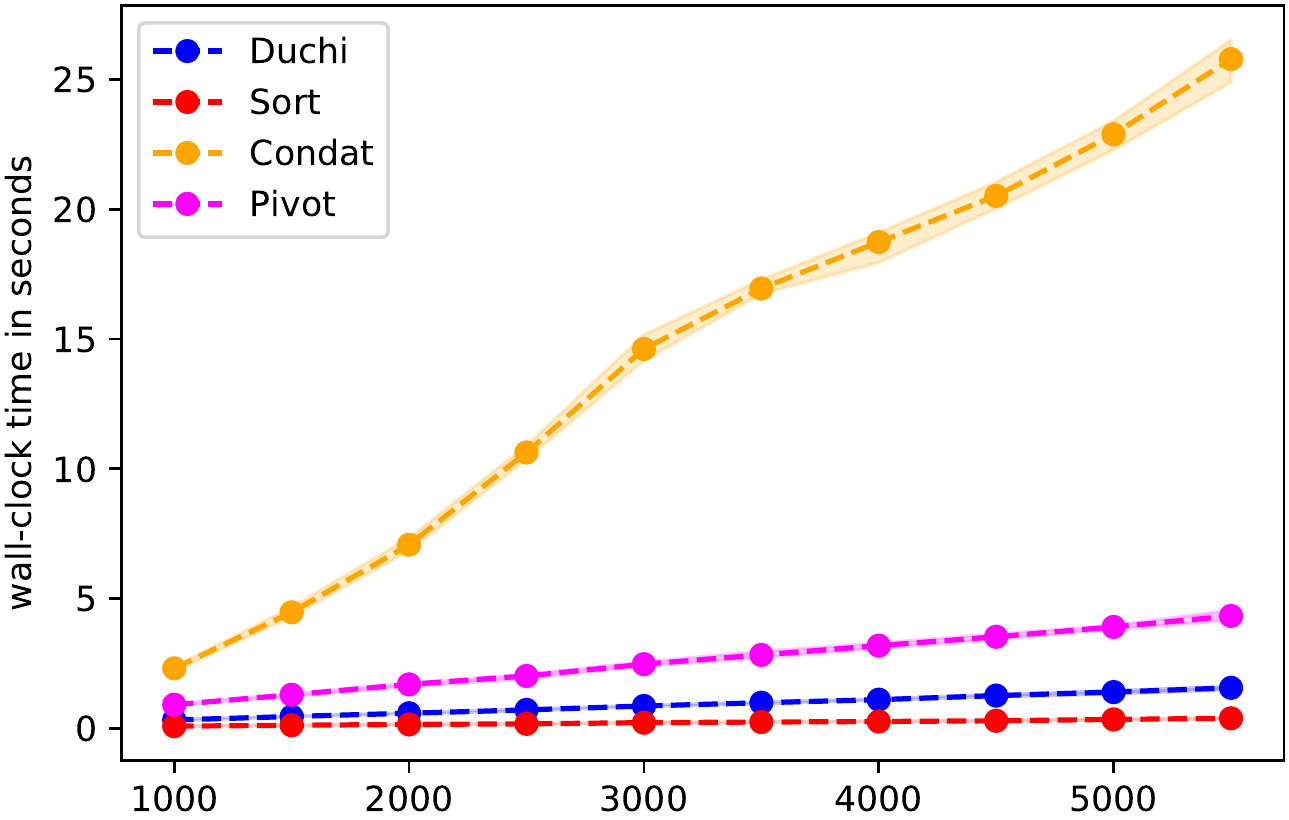}
    \caption{Time required to reach an error less than  for PGD with four different projection algorithms. {\bf Left:} $x_{\mathrm{true}} \in \mathrm{int}(\Delta_n)$. We use a target solution accuracy of $10^{-16}$. {\bf Right:} $x_{\mathrm{true}}$ a projection of a random Gaussian vector to $\Delta_n$. We use a target solution accuracy of $10^{-8}$ and a maximum of $200$ iterations. All results are averaged over ten trials and the shading denotes the min-max range.}
    \label{fig:PGD_SpeedTest}
\end{figure}

\subsection{Additional Benchmarking}
\label{sec:AdditionalBenchmarking}
Figure~\ref{fig:SmallScaleLS} contains additional benchmarking results where we compare HadRGD and PGD (both without line search) to EMDA and PFW using the so-called Demyanov-Rubinov step-size rule. As EMDA is significantly slower than the other three algorithms, we do not include it in our large-scale benchmarking. Finally, Figure~\ref{fig:AW_Linear} shows HadRGD-AW achieves a linear rate of convergence.
\begin{figure}[!ht]
    \centering
    \includegraphics[width=0.48\textwidth]{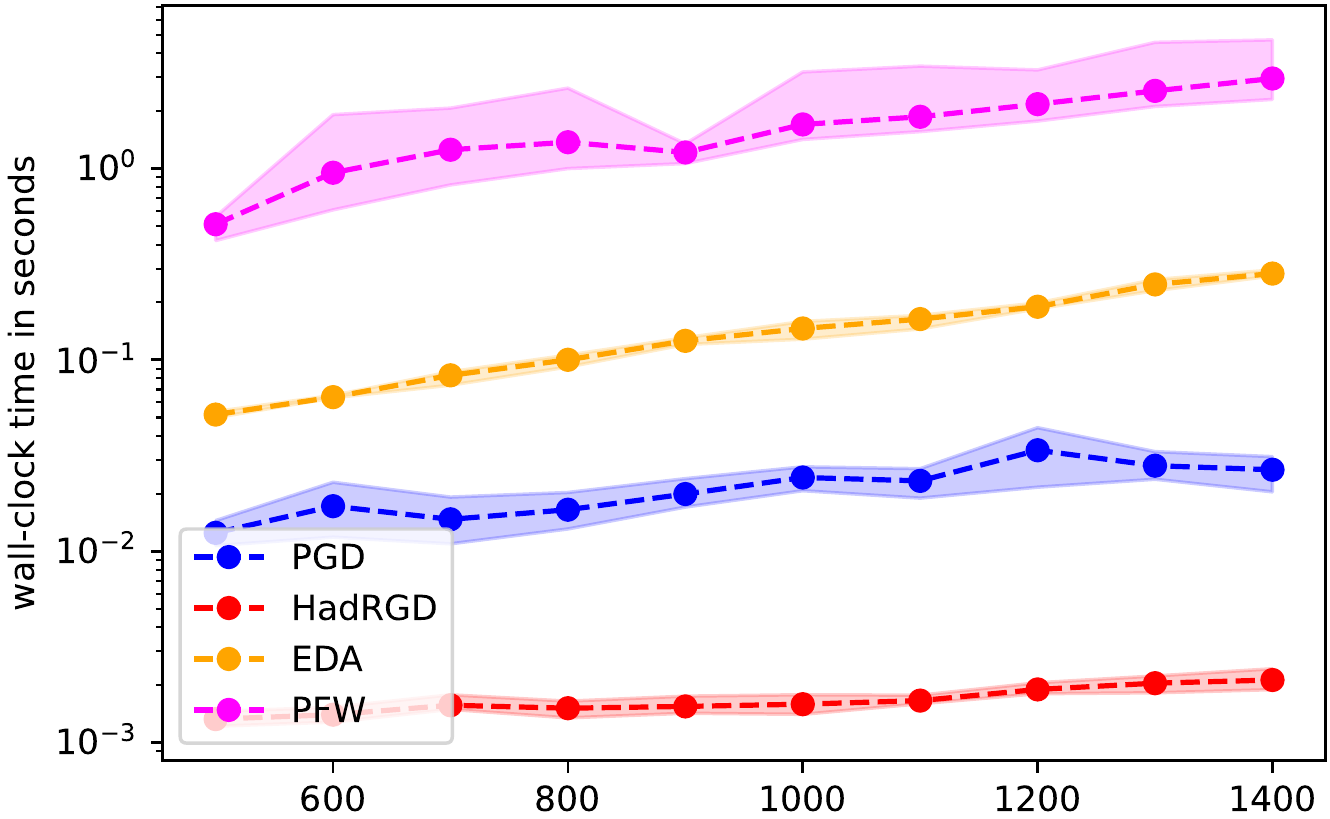}
    ~~~
    \includegraphics[width=0.48\textwidth]{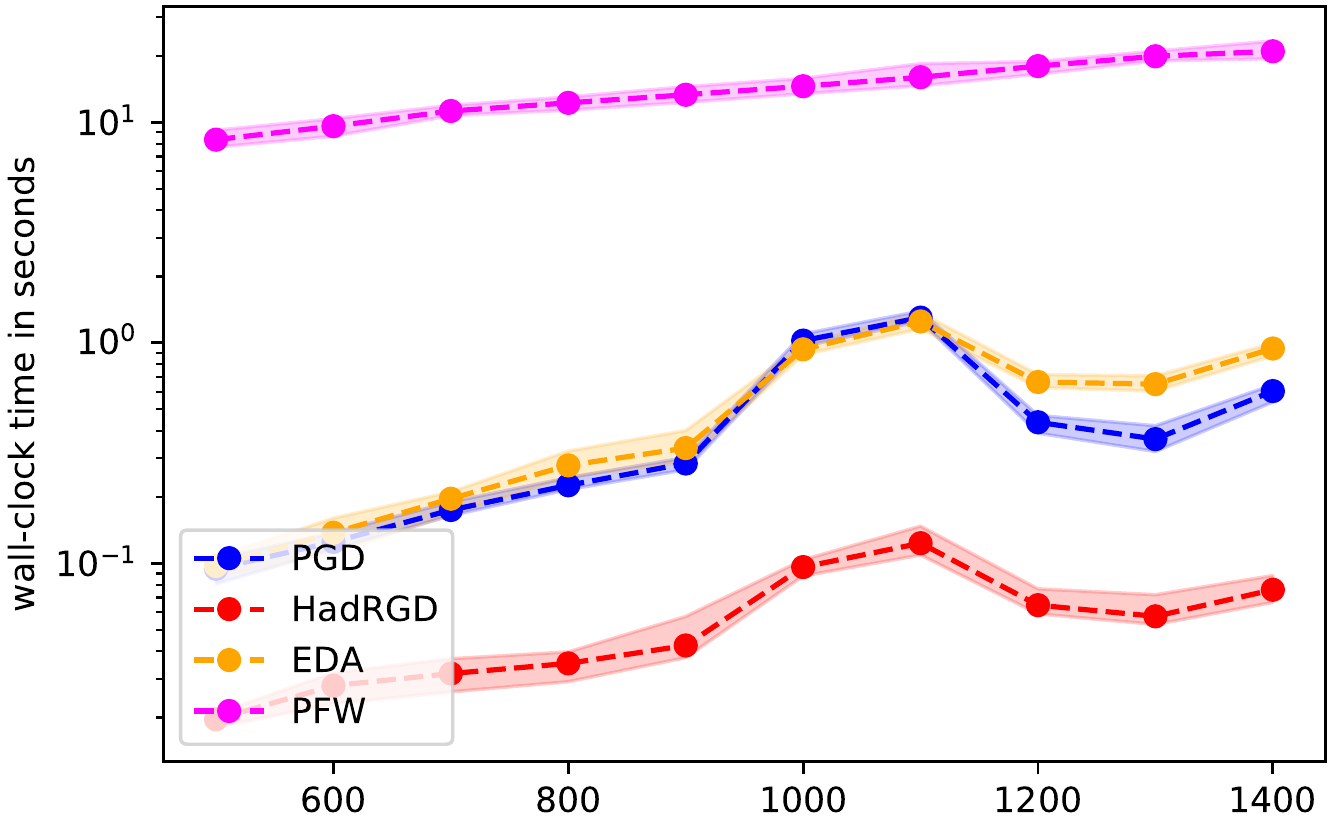}
    \caption{Time required to reach an error less than $10^{-8}$, or until the maximum number of iterations is hit. {\bf Left:} $x_{\mathrm{true}} \in \mathrm{int}(\Delta_n)$. {\bf Right:} $x_{\mathrm{true}}$ a projection of a random Gaussian vector to $\Delta_n$. All results are averaged over ten trials and the shading denotes the min-max range.}
    \label{fig:SmallScaleLS}
\end{figure}

\begin{figure}[!ht]
    \centering
    \includegraphics[width=0.47\linewidth]{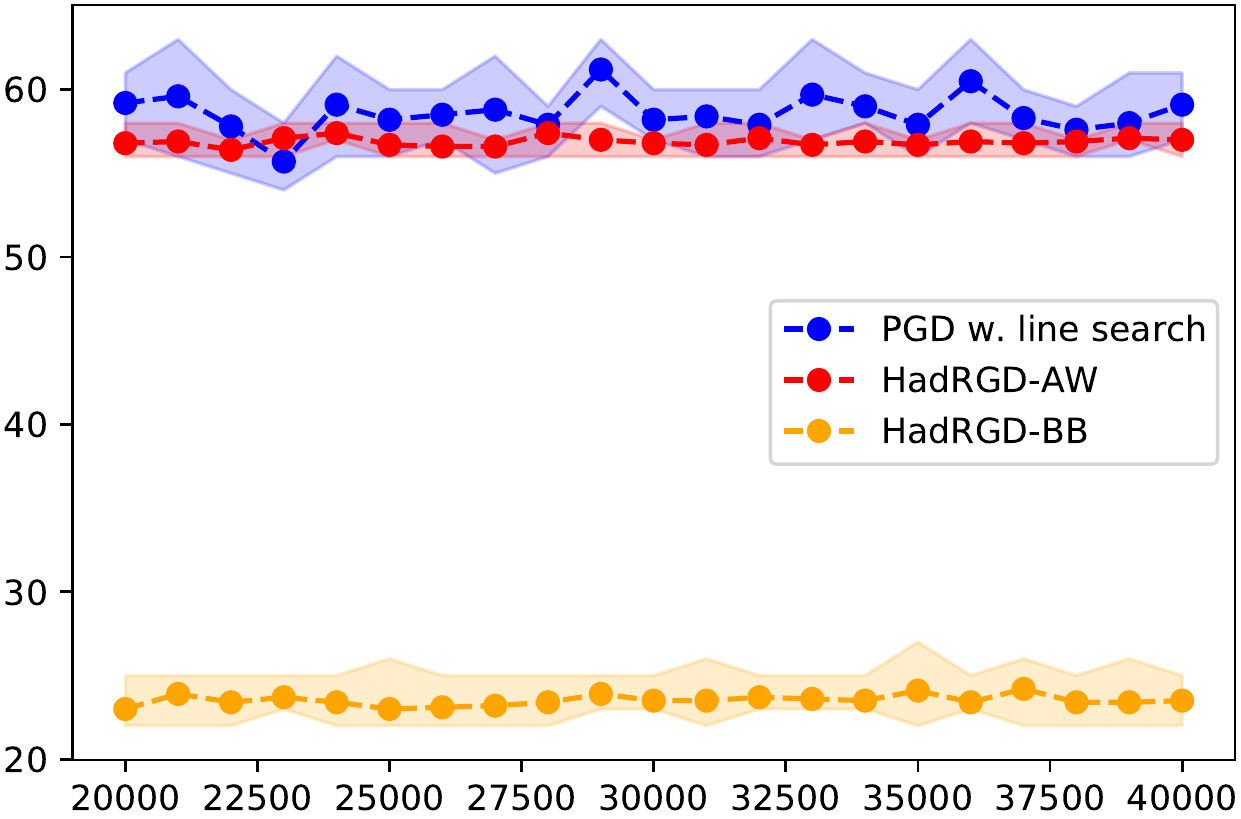}
    ~~~
    \includegraphics[width=0.47\linewidth]{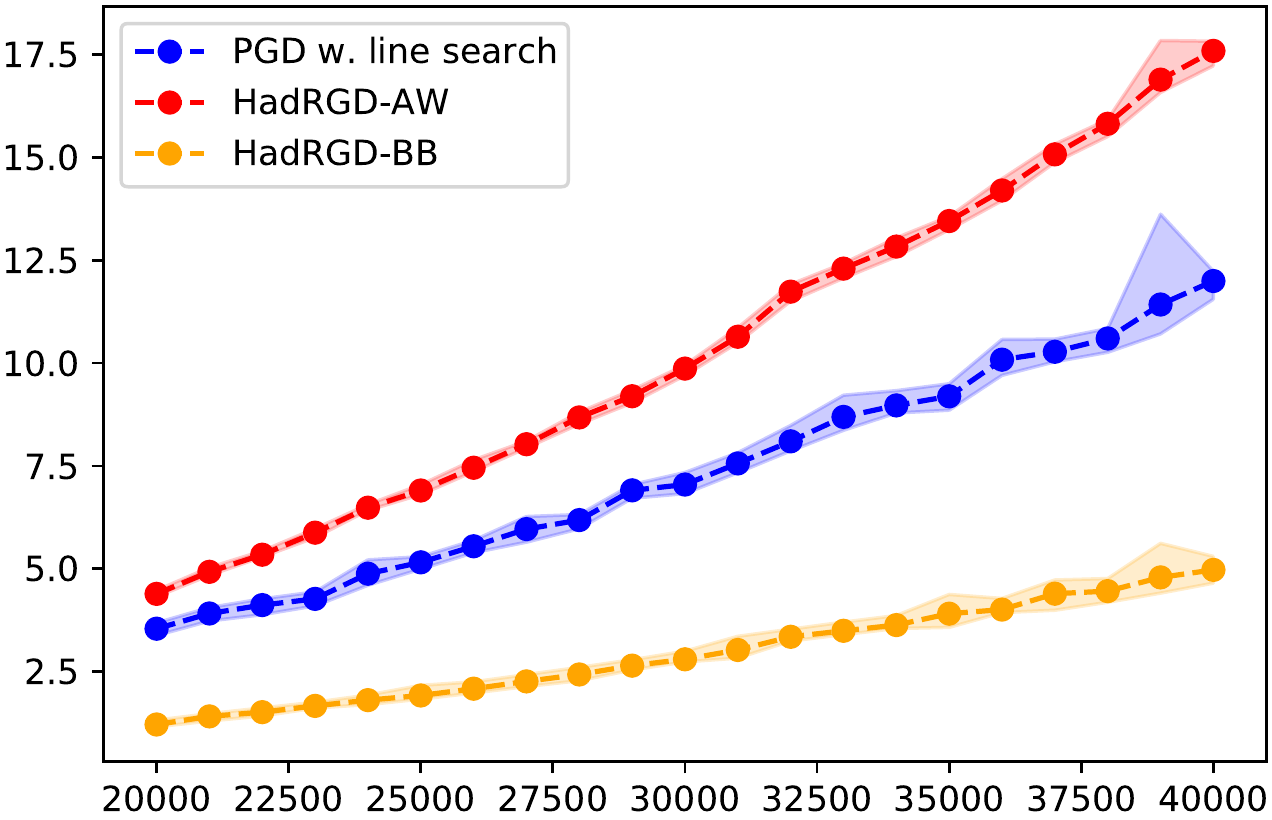}
    \caption{Solving the underdetermined least squares problem as in Section~\ref{sec:Experiments} with $x_{\mathrm{true}} \in \mathrm{int}(\Delta)$. Here, the target solution accuracy is $10^{-16}$. {\bf Left:} Number of iterations vs. $n$. {\bf Right:} Wall-clock time vs. $n$.}
    \label{fig:LeastSquares_Interior:2}
\end{figure}

\begin{figure}[!b]
    \centering
    \includegraphics[width=0.75\linewidth]{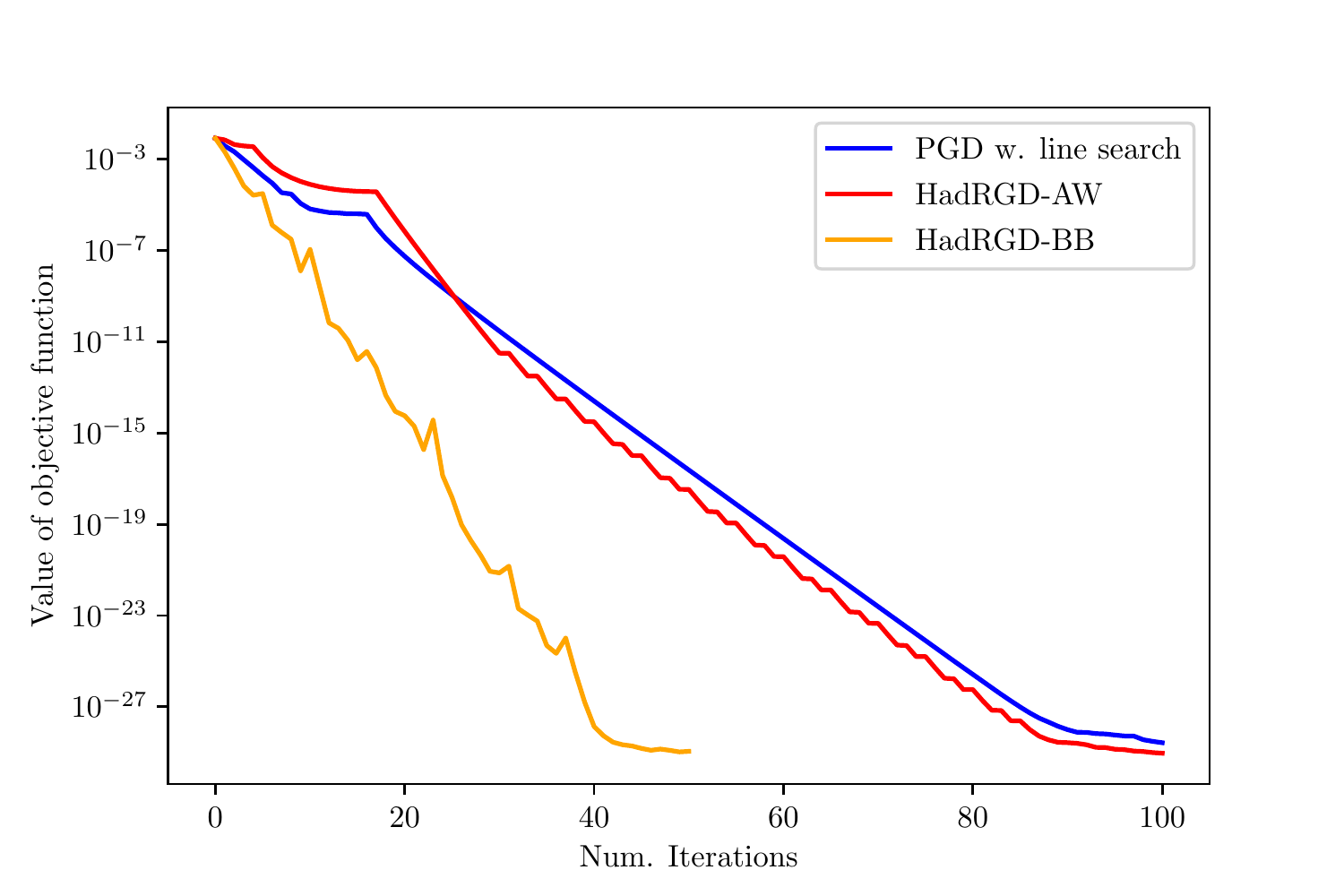}
    \caption{Solving the underdetermined least squares problem as in Section~\ref{sec:Experiments} with $x_{\mathrm{true}} \in \mathrm{int}(\Delta)$. HadRGD-AW, as well as HadRGD-BB and PGD with line search, enjoy a linear rate of convergence.}
    \label{fig:AW_Linear}
\end{figure}

\end{document}